\newcommand{\R}{\mathbb{R}}
\newcommand{\eps}{\varepsilon}
\theoremstyle{plain}
\newtheorem{defi}{Definition}[section]
\newtheorem{proposition}[defi]{Proposition}
\newtheorem{theorem}[defi]{Theorem}
\newtheorem{lemma}[defi]{Lemma}
\newtheorem{remark}[defi]{Remark}
\newtheorem{disc}[defi]{Discussion}
\theoremstyle{definition}
\theoremstyle{remark}
\numberwithin{equation}{section}
\numberwithin{figure}{section}
\numberwithin{table}{section}
\begin{document}

\title[Touchdown Localization]{Quantitative touchdown localization for the \\ MEMS problem
with variable dielectric permittivity}

\author{Carlos Esteve}
\address{Universit\'{e} Paris 13, Sorbonne Paris Cit\'{e}, CNRS UMR 7539, Laboratoire Analyse, G\'{e}om\'{e}trie et Applications, 93430, Villetaneuse, France.
{\tt E-mail address: esteve@math.univ-paris13.fr}
}

\author{Philippe Souplet}
\address{Universit\'{e} Paris 13, Sorbonne Paris Cit\'{e}, CNRS UMR 7539, Laboratoire Analyse, G\'{e}om\'{e}trie et Applications, 93430, Villetaneuse, France.
 {\tt E-mail address: souplet@math.univ-paris13.fr}
}

\date{\today}

\begin{abstract} 
We consider a well-known model for micro-electromechanical systems (MEMS)
with variable dielectric permittivity, based on a parabolic equation with singular nonlinearity.
We study the touchdown or quenching phenomenon.
Recently, the question whether or not touchdown can occur at zero points of the permittivity profile~$f$,
which had long remained open, was answered negatively in~\cite{GS}
for the case of interior points,
 and we then showed in~\cite{ES2} 
 that touchdown can actually be ruled out in subregions of $\Omega$ 
where $f$ is positive but suitably small.

The goal of this paper is to further investigate the touchdown localization problem
and to show that, in one space dimension, one can obtain quite quantitative conditions.
Namely, for large classes of typical, one-bump and two-bump permittivity profiles,
we find good lower estimates of the ratio $\rho$ between $f(x)$ and its maximum,
below which no touchdown occurs outside of the bumps. 
The ratio $\rho$ is rigorously obtained as the solution of a suitable {\bf finite-dimensional optimization problem}
(with either three or four parameters), which is then numerically estimated.
Rather surprisingly, it turns out that the values of
 the ratio $\rho$ are not ``small'' but actually {\bf up to the order $\sim0.3$,}
 which could hence be quite appropriate for robust use in practical MEMS design.
 
The main tool for the reduction to the finite-dimensional optimization problem is a quantitative type~I, temporal 
 touchdown estimate. The latter is proved by maximum principle arguments, applied to a multi-parameter family of refined, nonlinear auxiliary functions with cut-off. 
 
\end{abstract}

\maketitle

\section{Introduction}

\subsection{Mathematical problem and physical background}

We consider the problem
\begin{equation}\label{quenching problem}
\left\lbrace \begin{array}{rrl}
u_t - u_{xx} = f(x)(1-u)^{-p}, & x\in \Omega, & t>0, \\
u = 0, & x\in \partial \Omega, & t>0, \\
u(0,x) = 0, & x \in \Omega, &
\end{array} \right.
\end{equation}
where $\Omega=(-R,R)\subset \mathbb{R}$, $p>0$ and 
\begin{equation}\label{hypf}
\hbox{$f\geq 0$ is a H\"older continuous function in $\overline{\Omega}$. }
\end{equation}
Problem (\ref{quenching problem}) with $p=2$ is a known model for micro-electromechanical devices (MEMS)
and has received a lot attention in the past 15 years.
An idealized version of such device consists of two conducting plates, connected to an electric circuit.
The lower plate is rigid and fixed while the upper one is elastic and fixed only at the boundary.
Initially the plates are parallel and at unit distance from each other.
When a voltage (difference of potential between the two plates) is applied, the upper
plate starts to bend down and, if the voltage is large enough,
the upper plate eventually touches the lower one. This is called {\it touchdown} 
phenomenon.
Such device can be used for instance as an actuator, a microvalve (the touching-down part closes the valve),
or a fuse.

In the mathematical model, $u=u(t,x)$ measures the vertical deflection of the upper plate
and the function $f(x)$ represents the dielectric permittivity of the material
(and is also proportional to the -- constant -- applied voltage).
 As a key feature, the permittivity $f$ may be inhomogeneous
and this can be used to trigger the properties of the device.
We refer to~\cite{EGG} and the references therein for the full details of the model derivation.

It is well known that problem (\ref{quenching problem}) admits a unique maximal classical solution~$u$.
We denote its maximal existence time by $T=T_f\in (0,\infty]$. Moreover,
under some largeness assumption on $f$, it is known that the maximum of $u$ reaches the value $1$
at a finite time, so that $u$ ceases to exist in the classical sense, i.e. $T<\infty$.
This property, known as quenching, is the mathematical counterpart of the touchdown phenomenon. 

A point $x = x_0$ is called a {\it touchdown} or {\it quenching point} if there exists a sequence $\{(x_n , t_n )\} \in \Omega\times (0, T)$ such that
$$x_n\to x_0,\ \ t_n\uparrow T\ \ \hbox{and}\ \ u(x_n,t_n)\to 1 \ \hbox{as}\ n\to\infty.$$
The set of all such points is called the {\it touchdown} or {\it quenching set}, denoted by $\mathcal{T}=\mathcal{T}_f\subset\overline{\Omega}$. 

 In the past decades, MEMS problems, including system (\ref{quenching problem}) and the related touchdown issues,
 have received considerable attention in the physical and engineering as well as 
in the mathematical communities. We refer to \cite{EGG}, \cite{JAP-DHB02} for more details on the physical background, 
and to, e.g., \cite{G97}, \cite{FT00} \cite{PT01}, \cite{YG-ZP-MJW05}, \cite{FMPS07}, \cite{GG07}, \cite{GG08}, \cite{GHW08}, \cite{G08}, 
\cite{G08A},  \cite{KMS08},  \cite{GK}, \cite{G14}, \cite{GS} for mathematical studies.
See also \cite{Phi87}, \cite{L89}, \cite{G1}, \cite{FLV} for earlier mathematical work on the case of constant $f$.

\subsection{Motivation} 

 The question whether or not touchdown can occur at zero points of the permittivity profile~$f$,
raised in \cite{G08A}, \cite{GG08}, \cite{G08}, \cite{EGG}, was answered negatively in~\cite{GS} for the case of interior points.
 This is by no means obvious since, for the analogous blowup problem $u_t - \Delta u = f(x)u^p$
with $f(x)=|x|^\sigma$, examples of solutions with single-point blowup at the origin have been constructed in \cite{FT00}, \cite{GS11}
for suitable $\sigma>0$, $p>1$ and suitable initial data $u_0\ge 0$.
We then showed in~\cite{ES2} that touchdown can actually be ruled out in subregions of $\Omega$ 
where $f$ is positive but suitably small. The following theorem collects the two smallness criteria given in~\cite{ES2}.

\noindent \textbf{Theorem.} \textit{ 
Let $p>0$, $\Omega\subset \mathbb{R}^n$ a smooth bounded domain and $f$ a function satisfying \eqref{hypf} and 
\begin{equation*}
\left\lbrace \begin{array}{rrl}
&&T_f\le M,\quad \|f\|_\infty\le M,\quad f\ge r\chi_B, \\
\noalign{\vskip 1mm}
&&\hbox{where $M, r>0$ and $B\subset\Omega$ is a ball of radius $r$.}
\end{array} \right.
\end{equation*}
There exists $\gamma_0>0$ depending only on $p,\Omega,M,r$ such that:
\begin{itemize}
\item[{\it (i)}] For any $x_0\in \Omega$, if $f(x_0) <\gamma_0 {\hskip 1pt} \text{dist}^{p+1} (x_0,\partial\Omega)$, then $x_0$ is not a touchdown point.
\smallskip
\item[{\it (ii)}] For any $\omega\subset\subset \Omega$, if $\displaystyle\sup_{x\in \overline{\Omega}\setminus \omega} f(x) <
\gamma_0 {\hskip 1pt} \text{dist}^{p+1}(\omega,\partial\Omega)$, then the touchdown set 
 is contained in $\omega$. 
 \end{itemize}}

 Motivated by practical considerations of MEMS design, our aim in this article is to further investigate the 
touchdown localization problem
and to show that in one space dimension, where analytic computations can be made more precise,
one can obtain quite quantitative conditions.
 Namely, we look for a lower estimate of the ratio $\rho$ between $f$ and its maximum,
below which no touchdown occurs on a subregion of $\Omega$.
Rather surprisingly, it turns out that in the physical case $p=2$, under suitable assumptions on $f$,
our methods yield values of
 the ratio $\rho$ which are not ``small'' but can actually be up to the order $\rho\sim 0.3$, 
 which could hence be quite appropriate for robust practical use.

\subsection{Reduction to a finite-dimensional optimization problem and quantitative results} 

In order to give good estimates of the  ratio $\rho$, we shall consider two typical situations, which roughly correspond to a ``one-bump'' or a ``two-bump'' shape for the profile~$f$. 
 The touchdown is ruled out in a subinterval respectively located between a bump and 
an endpoint of $\Omega$, or between two bumps.

The idea behind this is that the plate can be covered with two dielectric materials, 
one with a high permittivity and the other with a lower permittivity. We then seek for a ratio between the two permittivities, allowing to rule out touchdown in the low permittivity region.

We point out that, as a consequence of our method, the ratio $\rho$ is rigorously obtained as the solution of a suitable {\it finite-dimensional optimization problem}, with either three or four parameters. 
Such kind of reduction in nonlinear parabolic problems is new, as far as we know. 

In spite of the rather awkward shape of the optimization problem,
it turns out to yield quite reasonable practical values of the threshold ratio $\rho$
in concrete cases.
 Before presenting our rigorous statements, let us illustrate 
 the results for the physical case $p=2$ by some concrete examples,
that can be deduced from them by a relatively 
simple numerical procedure applied to the finite-dimensional optimization problem
(see also Table~\ref{table intro} below for more applications). 
The following two figures represent some typical permittivity profiles $f(x)$ and the localization of the corresponding touchdown sets,
 in the one-bump and two-bump cases, respectively. 
The touchdown sets are localized in a neighborhood of the bumps, represented by the fat lines.


\begin{figure}[h]
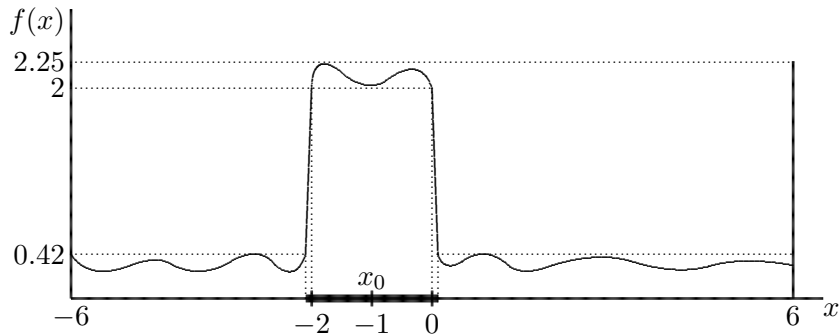

\[
\beginpicture
\setcoordinatesystem units <0.8cm,0.7cm>
\setplotarea x from -6 to 8, y from -1 to 5.5

\setdots <0pt>
\linethickness=1pt
\putrule from -5 0 to 7.5 0
\putrule from -5 0 to -5 5.5
\putrule from 7 0 to 7 4.5

\setquadratic
\plot -5 0.84  -4.6 0.53  -4 0.63 / 
\plot -4 0.63  -3.6 0.73  -3.3 0.63 /
\plot -3.3 0.63  -2.8 0.52  -2.3 0.73 /
\plot -2.3 0.73  -1.9 0.84  -1.6 0.63 /
\plot -1.6 0.63  -1.3 0.52  -1.1  0.84 /

\plot -1 4  -0.84 4.45  -0.4 4.2 /
\plot -0.4 4.2  -0.05 4.05  0.25 4.15 /
\plot 0.25 4.15  0.72 4.35  1 4 /

\plot 1.1 0.84  1.25 0.63  1.5 0.7 /
\plot 1.5 0.7  1.9 0.84  2.3 0.6 /  
\plot 2.3 0.6 2.6 0.52  3 0.63 /  
\plot 3 0.63  3.7 0.78  4.3 0.71 /
\plot 4.3 0.71  5 0.55  5.6 0.61 /
\plot 5.6 0.61  6.23 0.71  7 0.63 /

\setlinear
\plot -1.1 0.84  -1 4 / 
\plot 1 4  1.1 0.84 /

\setdots <2pt>
\setlinear 
\plot -1.1 0  -1.1 0.84 / 
\plot -1 0  -1 4 /
\plot 1 0  1 4 /
\plot 1.1 0  1.1 0.84 /
\plot -5 4  1 4 /
\plot -5 4.5  7 4.5 /
\plot -5 0.84  -1.1 0.84 / 
\plot 1.1 0.84  7 0.84 /

\put {$x$} [lt] at 7.5 -.1
\put {$6$} [ct] at 7 -.15
\put {$0$}   [ct] at 1 -.3
\put {$-1$}   [ct] at 0 -.3
\put {$x_0$}  [ct] at 0 .5
\put {$-2$}  [ct] at -1 -.3
\put {$-6$}  [ct] at -5 -.15
\put {$f(x)$} [rc] at -5.1 5.25
\put {$2.25$}  [rc] at -5.1 4.5
\put {$2$} [rc] at -5.1 4
\put {$0.42$} [rc] at -5.1 0.84 

\setdots <0pt>
\linethickness=3pt
\putrule from -1.1 0 to 1.1 0 

\linethickness=1pt
\putrule from -1 .15 to -1 -.15 
\putrule from 1 .15 to 1 -.15 
\putrule from 0 .15 to  0 -.15 

\endpicture
\] 
\caption{An illustration of the localization of the touchdown set in the one-bump case for $p=2$}
\label{fig1}
\end{figure}

\goodbreak

\begin{figure}[h]
\[
\beginpicture
\setcoordinatesystem units <0.5cm,0.7cm>
\setplotarea x from -11 to 11, y from -1 to 5.5

\setdots <0pt>
\linethickness=1pt
\putrule from -10 0 to 10.5 0
\putrule from -10 0 to -10 5.5
\putrule from 10 0 to 10 4.5

\setquadratic
\plot -10 0.68  -9 0.73  -8 0.63 / 
\plot -8 0.63  -7 0.53  -6.3 0.58 /
\plot -6.3 0.58  -5.6 0.66  -5 0.63 /
\plot -5 0.63  -4.5 0.61  -4 0.68 /
\plot -4 0.68  -3.1 0.79  -2.4 0.63 /
\plot -2.4 0.63  -1.8 0.57  -1.4 0.63 /
\plot -1.4 0.63  -1.25 0.7  -1.1 0.84 /

\plot -1 4  -0.84 4.45  -0.5 4.25 /
\plot -0.5 4.25  -0.28 4.12  -0.07 4.3 /
\plot -0.07 4.3  0.13 4.45  0.32 4.35 /
\plot 0.32 4.35  0.51 4.2  0.7 4.25 / 
\plot 0.7 4.25  0.9 4.3  1 4 /

\plot 1.1 0.84  1.3 0.63  1.8 0.7 /
\plot 1.8 0.7  2.3 0.8  3 0.67 /
\plot 3 0.67  3.7 0.63  3.9 0.84 /

\plot 4 4  4.3 4.45  4.7 4.25 /
\plot 4.7 4.25  5.05 4.1  5.45 4.23 /
\plot 5.45 4.23  5.8 4.33  6 4 /

\plot 6.1 0.84  6.3 0.71  6.6 0.74 /
\plot 6.6 0.74  7.2 0.82  7.8 0.68 /
\plot 7.8 0.68  8.8 0.58  10 0.79 /

\setlinear
\plot -1.1 0.84  -1 4 /
\plot 1 4  1.1 0.84 /
\plot 3.9 0.84  4 4 /
\plot 6 4  6.1 0.84 /
 
\setdots <2pt>
\setlinear 
\plot -1.1 0  -1.1 0.84 /
\plot -1 0  -1 4 /
\plot 1 0  1 4 /
\plot 1.1 0  1.1 0.84 /
\plot 3.9 0  3.9 0.84 /
\plot 4 0  4 4 /
\plot 6 0  6 4 /
\plot 6.1 0  6.1 0.84 /
\plot -10 4  6 4 /
\plot -10 4.5  10 4.5 /
\plot -10 0.84  -1.1 0.84 /
\plot 1.1 0.84  3.9 0.84 /
\plot 6.1 0.84  10 0.84 /

\put {$x$} [lt] at 10.5 -.1
\put {$10$} [ct] at 10 -.15
\put {$-10$} [ct] at -10 -.15
\put {$6$}   [ct] at 6 -.3
\put {$x_0$}  [ct] at 5 .5
\put {$4$}   [ct] at 4 -.3
\put {$1$}  [ct] at 1 -.3
\put {$x_1$} [ct] at 0 .5
\put {$-1$}  [ct] at -1 -.3
\put {$f(x)$} [rc] at -10.1 5.25
\put {$2.25$}  [rc] at -10.1 4.5
\put {$2$} [rc] at -10.1 4
\put {$0.42$} [rc] at -10.1 0.84

\setdots <0pt>
\linethickness=3pt
\putrule from -1.1 0 to 1.1 0
\putrule from 3.9 0 to 6.1 0 

\linethickness=1pt
\putrule from -1 .15 to -1 -.15 
\putrule from 1 .15 to 1 -.15 
\putrule from 4 .15 to  4 -.15 
\putrule from 6 .15 to  6 -.15

\endpicture
\] 
\caption{An illustration of  the localization of the touchdown set in the two-bump case for $p=2$}
\label{fig2}
\end{figure}

\noindent { Here, the profile $f$ satisfies 
$$\|f\|_\infty \leq 2.25,\qquad f(x)\geq 2\ \hbox{ in $\omega$},\qquad 
f(x)\leq 0.42\ \hbox{ in $\Omega\setminus\tilde\omega$,}$$  
with respectively $\Omega=(-6,6)$,  $\omega=(-2, 0)$, $\tilde\omega=(-2.1, 0.1)$
and $\Omega=(-10,10)$, $\omega=(-1,1)\cup (4,6)$, $\tilde\omega= (-1.1,1.1)\cup (3.9,6.1)$.
The touchdown set is then contained in $\tilde\omega$. }

 Let us now state our first two main theorems. We begin with the ``one-bump'' case.
Here we recall that the error function is defined as 
$$\mbox{erf}(x)=\dfrac{2}{\sqrt{\pi}}\displaystyle\int_0^x e^{-t^2} dt$$
and we also set $\overline\cot \,s=\cot s$ if $0<s\le \pi/2$, and $\overline\cot \,s=0$ if $s>\pi/2$.

  \goodbreak
  
\begin{theorem}\label{global result dim 1}
Consider problem (\ref{quenching problem}) with $p>0$ and $\Omega=(-R,R)$.
Let $x_0\in \Omega$ and assume 
\begin{equation}\label{hypmu1d0}
\mu>\mu_1(p):= \dfrac{p^p}{(p+1)^{p+1}} \dfrac{\pi^2}{2},\qquad 
d_0:=R-|x_0|-1>\frac{p+1}{\sqrt{p\mu}}\,\overline\cot\bigl[\sqrt{p\mu}\bigr].
\end{equation}
For each $d\in(0,d_0)$, there exists $\rho=\rho(p,\mu,\|f\|_\infty,d_0,d)\in (0,1)$ 
such that, if $f$ satisfies
\begin{equation}\label{condition 1 on f}
f\ge \mu \quad\hbox{ in $\bigl(x_0-1,x_0+1\bigr)$}
\quad\hbox{and}\quad
f<\rho\mu \quad\hbox{ in $ D:= [x_0+1+d,R]$,}
\end{equation}
then $T_f<\infty$ and there are no touchdown points in $D$.

In addition, $\rho$ can be chosen as the solution of the following optimization problem:
\begin{equation}\label{simplified problem}
\rho = \dfrac{1}{2}\, \underset{(\tau,\beta,K)\in \mathcal{A}}{\rm Sup} \left( \dfrac{\beta-d}{\beta}\right)^{p+1} 
\dfrac{S(t_0(\tau),\beta)}{K+\tau^{-p}} \min \Bigl\{ H\bigl(t_0(\tau),\beta\bigr), G\bigl(t_0(\tau),\beta,K\bigr) \Bigr\}
\end{equation}
with
$$
\mathcal{A} = \biggl\{(\tau,\beta,K) \in (0,1)\times (d,d_0)\times (0,\infty);\,\tau\ge
\textstyle\frac{\mu}{2\mu-\mu_1},\,K\geq \textstyle\frac{p}{\mu\beta^2}-\textstyle\frac{1}{p+1},\,\delta(\beta,K)\leq 1\biggr\}.
$$
Here we set
$$t_0(\tau) = \textstyle\frac{1-\tau^{p+1}}{(p+1)\|f\|_\infty},\qquad \, L=1+(p+1)K,\qquad \,\Gamma=\sqrt{\textstyle\frac{(p+1)L}{pK\mu\beta^2}},
\qquad \,A = \arctan\Gamma,\qquad \alpha = 1+ \textstyle\frac{p}{L}$$
and the functions $S, H, G, \delta$ are defined by
\begin{equation}\label{H and G}
\begin{array}{ll}
&S(t,\beta) =  e^{-\frac{\pi^2t}{4(d_0+1)^2}}\left[1-e^{-\frac{d_0(d_0-\beta)}{t}}\right], \qquad
\delta(\beta,K) = A(1+K)\sqrt{\frac{p+1}{pLK\mu}},\\
\noalign{\vskip 3mm}
&H(t,\beta) = \displaystyle\inf_{0<x<1}  \dfrac{\text{\rm erf} \left( \frac{1}{\sqrt{t}}\left(1+\frac{\beta}{2}x\right)\right) 
- \text{\rm erf} \left(\frac{\beta}{2\sqrt{t}}x\right)}{(1-x)^{p+1}}, \\
\noalign{\vskip 3mm}
&G(t,\beta,K) = (\Gamma^2+1)^{-\alpha/2}\ \underset{0<x<1}{\inf} \dfrac{\text{\rm erf} \left( \frac{2-(1-x)\delta}{2\sqrt{t}}\right) + 
\text{\rm erf} \left( \frac{(1-x)\delta}{2\sqrt{t}}\right) }{\cos^\alpha (Ax)}.
\end{array}
\end{equation}
\end{theorem}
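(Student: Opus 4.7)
The plan is to establish the statement in three stages: (I) derive an explicit upper bound on the touchdown time $T_f$ from the bump hypothesis $f\ge\mu>\mu_1(p)$; (II) prove a quantitative type~I temporal touchdown estimate in a parabolic neighborhood of the bump via the parabolic maximum principle applied to a multi-parameter family of auxiliary functions; and (III) use the resulting pointwise lower bound on $v=1-u$, combined with a linear heat-kernel comparison on $D$, to rule out touchdown there and identify $\rho$ with the supremum in \eqref{simplified problem}.

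Stage~(I). The constant $\mu_1(p)=\tfrac{p^p}{(p+1)^{p+1}}\tfrac{\pi^2}{2}$ is the critical value below which the stationary problem $-w''=\mu(1-w)^{-p}$ admits a zero-Dirichlet solution on a unit-scale interval, and the $d_0$ condition in \eqref{hypmu1d0} is its analogue on the extended interval $(x_0-1-d_0,x_0+1+d_0)$. Projecting \eqref{quenching problem} onto the principal Dirichlet eigenfunction of $-\partial_{xx}$ on that larger interval and using the convexity of $v\mapsto v^{-p}$ (Jensen's inequality) produces a scalar ODE for $y(t)=\int u\,\phi_1\,dx$ whose right-hand side stays strictly positive throughout $y\in[0,1)$, forcing $y$ to reach $1$ in finite time. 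This yields $T_f<\infty$ together with an explicit upper bound.

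Stage~(II), the heart of the proof. For each $(\tau,\beta,K)\in\mathcal{A}$, I would introduce on the cylinder $Q_\beta=(t_0(\tau),T_f)\times(x_0-1-\beta,x_0+1+\beta)$ the auxiliary function
$$J(t,x)\;=\;u_t(t,x)-\frac{c\,\varphi_\bullet(x)}{K+\tau^{-p}}\,(1-u(t,x))^{-p},$$
where $\varphi_\bullet$ is one of two cutoffs: a power-type profile $\varphi_H(\xi)\sim\bigl((\beta-|\xi-x_0-1|)/\beta\bigr)^{p+1}$ vanishing at $x_0\pm(1+\beta)$ (producing the factor $H$), and the nonlinear cosine profile $\varphi_G(\xi)=\cos^{\alpha}(A\xi)$ with $\alpha=1+p/L$, $L=1+(p+1)K$ and $A=\arctan\Gamma$ (producing $G$). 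A direct computation using $u_t-u_{xx}=f(1-u)^{-p}$ gives
$$J_t-J_{xx}\;\ge\;pf(1-u)^{-p-1}\,J+\Bigl[\varphi_\bullet''-\tfrac{p}{p+1}\tfrac{(\varphi_\bullet')^2}{\varphi_\bullet}\Bigr](1-u)^{-p},$$
and the parameter constraints $\tau\ge\tfrac{\mu}{2\mu-\mu_1}$, $K\ge\tfrac{p}{\mu\beta^2}-\tfrac{1}{p+1}$, and $\delta(\beta,K)\le 1$ are precisely tuned to make the bracketed ``cutoff coercivity'' term non-negative (the last keeping $A\xi\in[0,\pi/2]$ throughout $Q_\beta$). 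On the parabolic boundary of $Q_\beta$, $J\ge 0$ holds trivially where $\varphi_\bullet$ vanishes, while at $t=t_0(\tau)$ the spatially constant supersolution $U(t)=1-(1-(p+1)\|f\|_\infty t)^{1/(p+1)}$ forces $1-u(t_0(\tau),\cdot)\ge\tau$, which together with the choice of $c$ gives $J(t_0(\tau),\cdot)\ge 0$. The parabolic maximum principle then delivers $J\ge 0$ on $Q_\beta$, and integrating in time from $t$ to $T_f$ yields the temporal touchdown estimate
$$(1-u(t,x))^{p+1}\;\ge\;\frac{(p+1)c\,\varphi_\bullet(x)}{K+\tau^{-p}}\,(T_f-t).$$

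Stage~(III). Evaluated at $x=x_0+1+d$, the temporal estimate produces the cutoff value $((\beta-d)/\beta)^{p+1}$ for $\varphi_H$ (respectively the $\cos^\alpha$-value for $\varphi_G$). Viewing this as a time-dependent lower bound for $v$ on the bump-side edge of $D$ and using $v=1$ on $\partial\Omega$, a linear heat-kernel comparison of $v$ against the solution of $\tilde v_t-\tilde v_{xx}=0$ on the symmetric interval of half-length $d_0+1$ centered at $x_0$ propagates a strictly positive lower bound on $v$ throughout $D\times[t_0(\tau),T_f]$. The principal eigenvalue of $-\partial_{xx}$ on that interval supplies the factor $e^{-\pi^2 t/(4(d_0+1)^2)}$ in $S$, while the heat kernel convolved against the cutoff profile supplies the error-function expressions in $H$ and $G$. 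Since $f\le\rho\mu$ on $D$, the perturbation $-fv^{-p}$ is absorbable whenever $\rho$ lies strictly below the supremum over $\mathcal{A}$ of these combined factors, which excludes touchdown on $D$. The chief obstacle is Stage~(II): engineering the two cutoff profiles $\varphi_H,\varphi_G$ with sharp constants and matching boundary behaviour so that the induced coercivity is just strong enough; once this is in place, the remaining steps are technical but essentially classical comparison arguments.
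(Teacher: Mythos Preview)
Your proposal has the right overall architecture but a genuine gap in Stage~(II), and a misplacement of where the functions $S$, $H$, $G$ actually enter.

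\textbf{The missing $+K$ in $h(u)$.} Your auxiliary function uses $h(u)=(1-u)^{-p}$, with $K$ appearing only as a normalizing constant. In the paper the auxiliary function is
\[
J(t,x)=u_t-\varepsilon\mu\,a(x-x_0)\bigl[(1-u)^{-p}+K\bigr],
\]
and the perturbation $+K$ is essential. In the parabolic identity for $J$ it generates the extra positive term $pK a(x)f(x)(1-u)^{-p-1}$; combined with $f\ge\mu$ on the bump, this compensates for the fact that your coercivity quantity $\varphi''-\tfrac{p}{p+1}(\varphi')^2/\varphi$ is \emph{strictly negative} for the cosine profile $\cos^\alpha(A\,\cdot)$. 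Indeed, that profile solves $aa''=m(a')^2-Ma^2$ with $m=\tfrac{p}{(p+1)(1+K)}<\tfrac{p}{p+1}$ and $M=\tfrac{pK\mu}{1+K}>0$, so your bracketed term equals $(m-\tfrac{p}{p+1})(a')^2/a-Ma<0$. More generally, no nontrivial even cutoff on $[-(1+\beta),1+\beta]$ with $a'(0)=0$ and $a(\pm(1+\beta))=0$ can satisfy $a''\ge\tfrac{p}{p+1}(a')^2/a$ throughout, since the equality case integrates to $(c-r)^{p+1}$ or a constant, neither of which meets both boundary conditions. So as written, your $J$ cannot be made a supersolution on $Q_\beta$; the constraint $K\ge\tfrac{p}{\mu\beta^2}-\tfrac{1}{p+1}$ you cite has no effect on your bracket because $K$ does not appear in it.

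\textbf{Where $H$, $G$, $S$ come from.} There is a single cutoff $a$, piecewise defined (constant on $[0,r_0]$, cosine-power on $[r_0,1]$, power $(\tfrac{1+\beta-r}{\beta})^{p+1}$ on $[1,1+\beta]$), not two separate profiles run in parallel. The functions $H$ and $G$ are the infima, over the outer and inner pieces respectively, of the ratio $e^{t_0\Delta_{\mathbb R}}\chi_{(-1,1)}(r)/a(r)$; together with the bound $u_t\ge\mu\,e^{t\Delta_\Omega}\chi_{I_0}$ they secure the \emph{initial condition} $J(t_0,\cdot)\ge0$, and hence determine the admissible $\varepsilon$. The factor $S$ is the quantitative comparison constant between $e^{t\Delta_\Omega}$ and $e^{t\Delta_{\mathbb R}}$ on $I_\beta$, used in this same initial-time estimate. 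All three belong to Stage~(II), not~(III). The factor $((\beta-d)/\beta)^{p+1}$ is the only thing that comes from evaluating the type~I estimate at $x_0+1+d$.

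\textbf{Stage (III) is simpler.} Once the type~I estimate $(1-u)^{p+1}\ge (p+1)\hat\varepsilon\mu(T-t)$ holds at the single boundary point $x_0+1+d$, no heat-kernel propagation on $D$ is needed: a direct barrier $w(t,x)=(1-A(T-t)^{1/(p+1)})\psi(x)$ with $\psi$ affine on $D$ and $A$ chosen from $\hat\varepsilon$ suffices, using $\|f\|_{L^\infty(D)}<\rho\mu$. The constraint $\tau\ge\tfrac{\mu}{2\mu-\mu_1}$ is used here, to guarantee $(p+1)\hat\varepsilon\mu\le\tau^{p+1}/(T-t_0)$ so that the barrier dominates $u$ at $t=t_0$ as well.
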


\goodbreak
For the ``two-bump'' case, we have:

\goodbreak

\begin{theorem}\label{global result dim 1a}
Consider problem (\ref{quenching problem}) with $p>0$ and $\Omega=(-R,R)$.
Let $x_0, x_1\in \Omega$ be such that $|x_0|\ge |x_1|$ and assume \eqref{hypmu1d0}.
For each $d\in(0,d_0)$, there exists $\rho=\rho(p,\mu,\|f\|_\infty,d_0,d)\in (0,1)$ 
such that, if $x_0-x_1>2(1+d)$ and $f$ satisfies
\begin{equation}\label{condition 2 on f}
f\ge \mu  \quad\hbox{ in $\bigl(x_1-1,x_1+1\bigr)\cup \bigl(x_0-1,x_0+1\bigr)$}
\end{equation}
and 
\begin{equation}\label{condition 2 on fb}
f<\rho\mu  \quad\hbox{ in $ D:= \bigl[x_1+1+d,x_0-1-d\bigr]$,}
\end{equation}
then $T_f<\infty$ and there are no touchdown points in $D$. 
In addition, $\rho$ can be chosen as the solution of the optimization problem (\ref{simplified problem}). 
\end{theorem}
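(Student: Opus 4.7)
The plan is to follow the strategy used for Theorem~\ref{global result dim 1}---a carefully designed multi-parameter family of cut-off auxiliary functions, a parabolic maximum principle argument yielding a quantitative type~I temporal touchdown estimate, and a reduction to the finite-dimensional optimization \eqref{simplified problem}. The fact that Theorem~\ref{global result dim 1a} asserts the \emph{same} optimization problem and the \emph{same} threshold $\rho$ strongly suggests that the two-bump case reduces to the one-bump machinery applied bump-by-bump, with the hypothesis $|x_0|\ge|x_1|$ pinning down the worst-case length scale $d_0 = R - |x_0| - 1$.

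My first step is to use the reflection $x\mapsto -x$ to assume $x_0\ge 0$, so that $d_0$ is the smallest of the four ``bump-to-nearest-obstacle'' distances in the configuration (the obstacles being the two domain endpoints $\pm R$ and, via the gap $x_0 - x_1 > 2(1+d)$, the other bump). The finiteness of $T_f$ then follows from the largeness of $f$ on either bump by a standard eigenfunction comparison argument, as for Theorem~\ref{global result dim 1}.

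I would then split $D$ at the midpoint $m=(x_0+x_1)/2$ into $D_+ = [m,\,x_0-1-d]$ and $D_- = [x_1+1+d,\,m]$ and treat each half separately. For $D_+$, I would run the argument of Theorem~\ref{global result dim 1} anchored at the bump at $x_0$, with the cut-off auxiliary function now directed towards the interior of $\Omega$ rather than towards the boundary. The cut-off has geometric extent controlled by $d_0+1$, and---thanks to $|x_0|\ge|x_1|$ together with the gap hypothesis---the ``far'' end of its support either lies in the low-permittivity part of $D$ (where $f < \rho\mu$) or can be chosen to fall inside the opposite bump, where the cut-off is required to vanish. In either case the differential inequalities producing \eqref{simplified problem} are preserved, so the same $\rho$ governs the exclusion on $D_+$. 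The analogous construction anchored at the bump at $x_1$---whose side distance $R-|x_1|$ is at least $d_0+1$ by hypothesis---yields the exclusion on $D_-$ with an estimate that is at least as favorable, so the same $\rho$ suffices.

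The main obstacle I anticipate is verifying that the cut-off auxiliary function can be transplanted to this interior geometry without altering the differential inequalities behind \eqref{simplified problem}. In Theorem~\ref{global result dim 1} the ``far'' boundary of the cut-off support is a domain endpoint where $u=0$ automatically; here one must instead enforce a matching sign condition on an interior line, typically by requiring the cut-off to vanish there at the cost of constraining one of the free parameters $(\tau,\beta,K)$. Provided this bookkeeping goes through---and the natural expectation from $|x_0|\ge|x_1|$ is that the constraints tighten only on the $D_+$ side, in exactly the way already encoded in the admissible set $\mathcal{A}$ of \eqref{simplified problem}---the same optimization problem produces the same $\rho$ and the announced touchdown exclusion on $D = D_-\cup D_+$.
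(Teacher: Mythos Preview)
Your high-level instinct is right—the two-bump case does reduce to applying the one-bump machinery at each bump, with $|x_0|\ge|x_1|$ fixing the worst-case $d_0$—but you have misread the architecture of the one-bump proof, and this leads you to invent an obstacle that does not exist.

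The key point is that Proposition~\ref{type 1 quantitative} (the type~I estimate) and Lemma~\ref{basic supersol} (the no-touchdown criterion) are two \emph{decoupled} steps. Proposition~\ref{type 1 quantitative} requires only $f\ge\mu$ on $(x_j-1,x_j+1)$ and $I_\beta=(x_j-1-\beta,x_j+1+\beta)\subset\Omega$; it yields the estimate at $x_j\pm(1+d)$. The cut-off $a$ is supported on $I_\beta$ and vanishes at the endpoints $x_j\pm(1+\beta)$, so the boundary condition $J\ge 0$ on $\partial I_\beta$ holds automatically—there is no need for $u=0$ there, nor any ``matching sign condition on an interior line.'' In particular the cut-off never needs to reach the opposite bump or the far boundary of $\Omega$; your anticipated obstacle simply does not arise.

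Consequently the paper does not split $D$ at the midpoint. It applies Proposition~\ref{type 1 quantitative} once centered at $x_0$ (giving the type~I bound at the right endpoint $x_0-1-d$ of $D$) and once centered at $x_1$ (giving the bound at the left endpoint $x_1+1+d$). Since $|x_1|\le|x_0|$ implies $R-|x_1|-1\ge d_0$, and since $S(t,\beta)$ in \eqref{H and G}—hence $\hat\eps$—is monotone increasing in $d_0$, the second application yields at least the same constant. One then applies Lemma~\ref{basic supersol} in its case~(i) form, with $\Gamma=\partial D$ consisting of \emph{both} endpoints, directly to the whole interval $D$. Your midpoint splitting would instead require a type~I estimate (or some substitute) at the interior point $m$, which you do not have.
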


 Of course, in order to apply Theorems~\ref{global result dim 1}-\ref{global result dim 1a},
 it is not necessary in practice to determine the value of $\rho$ itself.
Any number $\overline\rho<\rho$ can be used instead of $\rho$ in assumptions (\ref{condition 1 on f}) and (\ref{condition 2 on fb}).
It therefore suffices to evaluate the function in the RHS of (\ref{simplified problem}) for suitable choices of $(\tau,\beta,K)\in \mathcal{A}$.

 In Table~\ref{table intro}, for the physical case $p=2$ and physically reasonable values of the parameters,
  we present some numerical lower estimates of the threshold ratio $\rho$ (see the column $\overline{\rho}_1$). 
 They show that Theorems~\ref{global result dim 1}-\ref{global result dim 1a}
allow to reach ratios up to the order of 
$$\rho\sim 0.17,$$
which seems quite satisfactory in view of robust practical
conception of MEMS, in which one would like to prevent touchdown in specific parts of the device
by proper design of the permittivity profile.
As for the results in the column $\overline{\rho}_2$ of Table~\ref{table intro}, they even give values up to
$$\rho\sim 0.3.$$
However, they are based on a more complicated optimization problem,
whose lengthy statement is therefore postponed to Section~2 (see Theorem~\ref{global result dim 1c}).
Figures~\ref{fig1} and \ref{fig2} above are based on the second  line in Table~\ref{table intro} (using $\overline{\rho}_2$ as lower estimate for $\rho$).  Note that in the example of Figure~\ref{fig2}, we are applying the localization criteria from the two-bump and one-bump cases at the same time (between and at the exterior of the two bumps).

\begin{table}[h] 
\begin{center}
\begin{tabular}{|cccc|c|c|}
\hline
$\mu$ & $\|f\|_\infty$   & $d$   & $d_0$   & $\overline{\rho}_1$   & $\overline{\rho}_2$   \\
\hline   
1   &   1.1   &   0.1   &   5   &   \textbf{0.1050}   &   \textbf{0.2249}   \\
\hline
2   &   2.25     &   0.1  &    4  &   \textbf{0.1182}   &   \textbf{0.2111}   \\
\hline
3   &   3.5   &   0.01  &   5   &   \textbf{0.1554}   &   \textbf{0.2698}   \\
\hline
6   &   6.2   &   0.01  &   10   &   \textbf{0.1682}   &   \textbf{0.2856}   \\
\hline
10   &   10   &   0.005  &   10   &   \textbf{0.1732}   &   \textbf{0.2921}   \\
\hline
\end{tabular}
\end{center}
\vskip 1mm
\caption{Lower estimates of the threshold ratio $\rho$ for $p=2$, using 
Theorems~\ref{global result dim 1}-\ref{global result dim 1a} (column $\overline{\rho}_1$) 
and Theorems~\ref{global result dim 1c}-\ref{global result dim 1c2}
 (column $\overline{\rho}_2$).}
\label{table intro}
\label{tableex1}
\end{table}

 The evaluation of $\overline{\rho}_1$ and $\overline{\rho}_2$ in Table~\ref{table intro} 
is done with the help of the computational tool $Matlab$,
and this can be done with very good accuracy.
See Section~6 for details on the numerical procedure.
In particular we stress that we use a ``monotone'' discretization scheme to evaluate the infima in (\ref{H and G}),
which guarantees that the discrete infima are not larger than the exact ones.
In this way, the only possible sources of errors in excess on $\overline\rho$ are the round-off machine errors and the numerical errors in 
the $Matlab$ evaluations (for instance those of $\text{\rm erf}$).
In principle this can be guaranteed with any reasonably prescribed safety margin.


   \medskip

\begin{disc}\label{disc1}
 (a) It is a natural question whether the above touchdown localization behavior
could be true whenever $\rho<1$ in  (\ref{condition 1 on f}), (\ref{condition 2 on fb}). 
Actually,  we show in~\cite{ES2} that this is {\bf not} the case. Indeed, among other things, we construct examples
showing that, for some class of symmetric ``M''-shaped profiles in $\Omega=(-R,R)$, where $f(0)$ is less but
close enough to the maximum of $f$,
touchdown does occur at the origin, and only there.
Moreover, interestingly, the touchdown set is then located {\bf far away} from the points of maximum of $f$.
 This is illustrated in Figure~\ref{SinglePointTouchdown}.
Although the function $f$ is also a two-bump profile, it clearly presents a reverse situation to that described in 
Theorems~\ref{global result dim 1}-\ref{global result dim 1a} and illustrated in Figure~\ref{fig2}. 
This shows that the threshold ratio $\rho$ cannot exceed a certain value $\rho_0$ less than~$1$ 
and that there is an intermediate range $(\rho_0,1]$ where results 
of the type of Theorems~\ref{global result dim 1}-\ref{global result dim 1a} cannot hold. 

\begin{figure}[h]
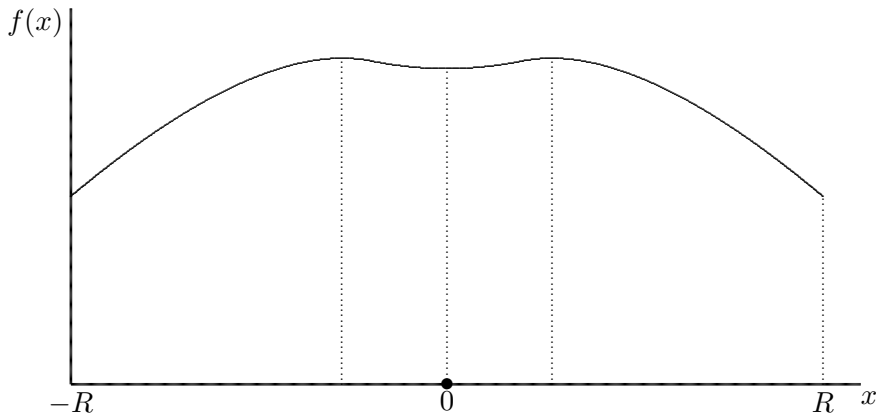

\[
\beginpicture
\setcoordinatesystem units <1cm,1cm>
\setplotarea x from -6 to 6, y from -1 to 5

\setdots <0pt>
\linethickness=1pt
\putrule from -5 0 to 5.5 0
\putrule from -5 0 to -5 5

\setquadratic

\plot -5 2.5  -2.75 4  -1 4.3 /
\plot 5 2.5  2.75 4  1 4.3 /
\plot -1 4.3  0 4.2  1 4.3 /

\setdots <2pt>
\setlinear
\plot 0 0  0 4.2 /
\plot -1.4 0  -1.4 4.3 /
\plot 1.4 0  1.4 4.3 /
\plot 5 0  5 2.5 /

\put {$x$} [lt] at 5.5 -.1
\put {$R$} [ct] at 5 -.1
\put {$0$}   [ct] at 0 -.1
\put {$-R$}  [ct] at -5 -.1
\put {$f(x)$} [rt] at -5.1 5

\put {$\bullet$} [cc] at 0 0

\endpicture
\]
\caption{Touchdown only at the origin for an ``M''-shaped profile
corresponding to the intermediate range of $\rho$ (cf.~Discussion~\ref{disc1}(a)).} 
\label{SinglePointTouchdown}
\end{figure}


(b)  However,  the values of $\rho$ found in Theorems~\ref{global result dim 1}, \ref{global result dim 1a}, \ref{global result dim 1c}--\ref{global result dim 1b}
are probably not optimal and we have no indication 
what the value of the optimal threshold $\rho$ should be.
This seems to be a difficult problem.

(c) For notational simplicity, we have chosen intervals of radius~1 in conditions (\ref{condition 1 on f}), (\ref{condition 2 on f}), but by a straightforward scaling argument, one can see that this entails no loss of generality.

(d) When $\Omega$ is a ball and $f$ is monotonically decreasing with respect to $|x|$, 
the touchdown set is reduced to the origin (see \cite{G1}, \cite{G08}).
We stress that in Theorems~\ref{global result dim 1}-\ref{global result dim 1a} we do not make any kind of monotonicity or symmetry  assumptions on $f$.  On the other hand, for general nonmonotone profiles $f$, it remains an open -- and probably difficult -- problem to determine the finer structure of the touchdown set beyond the localization properties in Theorems~\ref{global result dim 1}-\ref{global result dim 1a}.
Numerical simulations in~\cite{EGG} suggest that it need not consist of isolated points but might contain intervals.

(e) Consider the reference case when $\Omega=(-1,1)$ and $f=k=Const.$,
and recall that there exists a number $k^*$ such that $T<\infty$ if $k>k^*$ and $T=\infty$ if $k<k^*$.
This is the so-called pull-in voltage, for a unit profile (see e.g.~\cite{EGG}). We have 
$$\frac{2p^p}{(p+1)^{p+1}} \le k^*\le \frac{p^p}{(p+1)^{p+1}}\dfrac{\pi^2}{4}$$
(see \cite[Proposition 2.2.1 and 2.2.2]{EGG} and their proofs).
In particular for $p=2$, we have $0.30\sim \frac{8}{27} \le k^*\le\frac{\pi^2}{27}\sim 0.37$, 
whereas Theorems~\ref{global result dim 1}-\ref{global result dim 1a} require $\mu\geq\frac{2\pi^2}{27}\sim 0.73.$
Although this condition is a bit more restrictive, we note that $ 0.73$
still remains of the same order of magnitude as the reference pull-in voltage.
On the other hand, Theorem~\ref{global result dim 1b} below applies whenever $\mu>\frac{p^p}{(p+1)^{p+1}}\frac{\pi^2}{4}$ ($\sim 0.37$ for $p=2$).
\end{disc}

 The article is structured as follows. In Section 2 we present further localization results, 
either giving more precise estimates of $\rho$ 
(Theorems~\ref{global result dim 1c}-\ref{global result dim 1c2}),
or requiring weaker assumptions on $\mu, d_0$ (Theorem~\ref{global result dim 1b}).
In Section 3, we state and prove a type I touchdown estimate (Proposition~\ref{type 1 quantitative}),
which is a key ingredient in the proof of the localization results.  
Here is where our basic, multi-parameter auxiliary function is introduced, which will eventually lead to the optimization problem.
The proofs of Theorems~\ref{global result dim 1}, \ref{global result dim 1a} and \ref{global result dim 1b} are given in Section 4, 
as a consequence of a no-touchdown criterion (Lemma~\ref{basic supersol}), combined with the previous type I estimate.
In Section 5, we prove Theorems~\ref{global result dim 1c}-\ref{global result dim 1c2}, as a consequence of a more precise type I estimate, obtained by refining various ingredients from the proof of Theorems~\ref{global result dim 1}-\ref{global result dim 1a}.
In Section 6, we describe the numerical procedures that we use to handle the optimization problem.
Finally, the article is completed by two appendices. The first one provides some useful quantitative comparison estimates for heat semigroups,
and the second one is devoted to establishing the optimality of the  cut-off functions 
$a(r)$ appearing in our proofs of localization.


\section{Further quantitative results}\label{further results}

\subsection{Improved ratio}

 As announced in introduction, the following theorem allows one to obtain better estimates for the ratio $\rho$
 (cf.~the last column of Table~\ref{table intro}), at the expense of a more complicated optimization problem.
 Although the statement may seem somewhat lengthy, we stress that this result allows for quite good estimates of $\rho$
 (cf.~Table~\ref{table intro}).
In what follows, we set
$$\mu_0(p):= \dfrac{p^p}{(p+1)^{p+1}} \dfrac{\pi^2}{4}.$$
\goodbreak

\begin{theorem}\label{global result dim 1c}
Consider problem (\ref{quenching problem}) with $p>0$ and $\Omega=(-R,R)$. 
Assume 
\begin{equation}\label{Th 6.2 hypothesis}
\mu > \max \left\{\mu_0(p), \frac{\arctan^2(\sqrt{p+1})}{p}\right\}, 
\qquad 0<d<\sqrt{\frac{p+1}{p\mu}} < d_0:=R-|x_0|-1.
\end{equation}
If $f$ satisfies
\begin{equation}\label{condition 1 on f thm21}
f\ge \mu \ \ \hbox{in $J:= \bigl[x_0-1,x_0+1\bigr]$}
\ \quad\hbox{and}\ \quad
f<\rho\mu \ \ \hbox{in $D:= \overline\Omega\setminus  \bigl(x_0-1-d,x_0+1+d\bigr)$,}
\end{equation}
then $T_f<\infty$ and there are no touchdown points in $D$.
Here $\rho$ is given by the solution of the following optimization problem:
\begin{equation}\label{defrho1c} 
\rho= \underset{ (\beta,K,\tau, \lambda) \in \mathcal{A}}{\rm Sup}\  
\min \left\lbrace \dfrac12\left(\dfrac{\beta-d}{\beta}\right)^{p+1}S(t_0(\tau),\beta)G^*(\tau,t_0(\tau),\beta,K, \lambda),\
\dfrac{1}{p+1} \dfrac{\tau^{p+1}}{(T-t_0(\tau))\mu},\  \lambda \right\rbrace
\end{equation}
with
\begin{equation}\label{condition on parameters 2c}
\mathcal{A}=
\biggl\{(\beta,K,\tau, \lambda);\ \beta\in (d, d_0),\  K\in (0,p],\ \tau,\lambda \in (0,1),\
K\mu\beta^2 \leq \frac{p(p+2)-K}{p},\ \delta_1+\delta_2 
\leq 1 \biggr\},
\end{equation}
\begin{equation}\label{defG*} 
G^*( \tau,t,\beta,K, \lambda) =  
 \inf_{r\in (r_0,1+\beta)} \Bigl(1+p\mu S(t,\beta)\Lambda(t,r)\Bigr)\,
\dfrac{{\rm erf}\Bigl(\frac{r+1}{2\sqrt{t}}\Bigr)+{\rm erf}\Bigl(\frac{1-r}{2\sqrt{t}}\Bigr)}{W_{\tau,K, \lambda}(r) a_{\beta,K}(r)},
\end{equation}
\vskip .5mm
\begin{equation}\label{defLambda}
\Lambda (t,r) = \frac{1}{2} \int_0^t  \bigl(1-Y(s)\bigr)^{-\frac{p}{p+1}-1}\left[ \text{\rm erf} \left(\frac{r+1}{2\sqrt{ t-s}}\right) + \text{\rm erf} \left(\frac{1-r}{2\sqrt{ t-s}} \right) \right] ds,
\end{equation}
where the function $S$ is defined in (\ref{H and G}) and we set
$Y(s)= S(s,0)\, \text{\rm erf} \left(\frac{1}{\sqrt{s}}\right) \frac{(p+1)\mu}{2}\, s$,
\begin{equation}\label{W q=1}
W_{\tau,K,\lambda} (r) =
K\bigl(1-(1-\tau)\tilde{u}(r)\bigr) + \bigl(1-(1-\tau)\tilde{u}(r)\bigr)^{-p}, 
\end{equation}
\begin{equation}\label{DefTildeu}
\tilde{u}(r) = \dfrac{\lambda \mu}{\|f\|_\infty}+\Bigl(1-\dfrac{\lambda \mu}{\|f\|_\infty}\Bigr) 
\frac{1}{\cosh\bigl(\sqrt{ c_p\|f\|_\infty}\, (r-1-d)_+\bigr)}, 
\end{equation} 
\begin{equation}
a_{\beta,K}(r)= \left\lbrace \begin{array}{ll}
D_1 \cos^\alpha\bigl(A_0\sqrt{K\mu}(r-r_0)\bigr), &  r\in [r_0,r_1), \\
\noalign{\vskip 2mm}
D_2 \cos^{p+1} \left(\sqrt{K\mu}(r-1)+A_3\right), &  r\in [r_1,1], \\
\noalign{\vskip 1mm}
\left(1-\beta^{-1}(r-1)\right)^{p+1}, & r\in (1,1+\beta],
\end{array}\right.
\end{equation}\label{deft0}  
$$t_0(\tau) = \frac{1-\tau^{p+1}}{(p+1)\| f\|_\infty},\qquad \overline{T} = \frac{1}{(p+1)(\mu-\mu_0(p))},
\qquad c_p = \dfrac{(p+1)^{p+1}}{p^p},\qquad  L=p(p+2)-K,$$
$$
A_0 =\sqrt{\textstyle\frac{p(1+K)+K L}{p(1+K)^2}},\ \
A_1 = \arctan\Bigl( \sqrt{\textstyle\frac{p(1+K)}{ L}+K} \Bigr),\ \
A_2 = \arctan\Bigl( \sqrt{\textstyle\frac{p}{ L}}\Bigr),\ \ A_3 = \arctan\Bigl(\textstyle\frac{1}{\sqrt{K\mu\beta^2}}\Bigr),$$
$$\delta_1(K)=\dfrac{A_1}{A_0\sqrt{K\mu}},\quad
\delta_2 (\beta,K) = \dfrac{A_3-A_2}{\sqrt{K\mu}},\quad 
r_0 = 1-\delta_1-\delta_2,\quad r_1=1-\delta_2, \quad \alpha = \frac{p+1}{(1+K)A_0^2},$$
$$D_2= \left(1+\frac{1}{K\mu\beta^2}\right)^{\frac{p+1}{2}}, 
\quad D_1 =  D_2 \frac{D_{11}^{\alpha}}{D_{12}^{p+1}}, \quad
D_{11} = \textstyle{\sqrt{1+K + \frac{p(1+K)}{ L}},} \quad
D_{12} = \textstyle{\sqrt{1 + \frac{p}{ L}}}.$$
\end{theorem}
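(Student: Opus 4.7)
My plan mirrors the global strategy used for Theorems~\ref{global result dim 1}-\ref{global result dim 1a}: combine a quantitative type~I temporal touchdown bound with the no-touchdown criterion of Lemma~\ref{basic supersol}, but now with several refinements that produce the richer optimization problem (\ref{defrho1c}).

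\textbf{Step 1: Refined type~I estimate.} First I would establish the analogue of Proposition~\ref{type 1 quantitative} for the present setting, namely
$$1-u(t_0(\tau),x) \geq \tau \quad\text{for all } x \in J,$$
where $t_0(\tau)=\tfrac{1-\tau^{p+1}}{(p+1)\|f\|_\infty}$. The argument is a maximum principle applied to an auxiliary function of the form $J_a=u_t-\psi(u)\,a(x-x_0)$ with a multi-parameter cut-off $a=a_{\beta,K}$. The sharpness is obtained via two key refinements. The first is the factor $1+p\mu\,S(t,\beta)\Lambda(t,r)$ in $G^*$: it encodes a Duhamel iteration in which the preliminary heat-semigroup lower bound $S(t,\beta)$ for $1-u$ is fed back into the source $f\,(1-u)^{-p}$, producing an improved a priori control on $u$ itself; the quantity $Y(s)$ in (\ref{defLambda}) tracks the growth of this second iterate. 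The second is the function $\tilde u(r)$ together with the new parameter $\lambda$, which encodes a pointwise pre-estimate of $u$ on $\overline\Omega\setminus J$ coming from a $\cosh$-type supersolution of the linearised problem with bulk source $\|f\|_\infty$; this is what allows $W_{\tau,K,\lambda}$ to replace the coarser linear/singular combination used in Theorem~\ref{global result dim 1}.

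\textbf{Step 2: Piecewise cut-off and application of Lemma~\ref{basic supersol}.} The cut-off $a_{\beta,K}$ is piecewise on $[r_0,r_1]$, $[r_1,1]$, $(1,1+\beta]$, with $C^1$ matching at the junctions. The exponents $\alpha$ and $p+1$ and the constants $A_0,\ldots,A_3,D_1,D_2$ are precisely those that make the differential inequality for $J_a$ propagate across each interface, and $\delta_1+\delta_2\leq 1$ guarantees $r_0\geq 0$. With the refined type~I bound in hand, I apply Lemma~\ref{basic supersol} on $(\overline\Omega\setminus J)\times(t_0(\tau),T_f)$: the profile $W_{\tau,K,\lambda}(r)\,a_{\beta,K}(r)$ together with the kernel factor $\tfrac12 S(t,\beta)\bigl[\text{erf}\bigl(\tfrac{r+1}{2\sqrt t}\bigr)+\text{erf}\bigl(\tfrac{1-r}{2\sqrt t}\bigr)\bigr]$ yields a supersolution barrier for $1-u$ on $D$, which remains strictly positive there provided $f<\rho\mu$ with $\rho$ as in (\ref{defrho1c}). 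The three quantities inside the $\min\{\,\cdot\,,\,\cdot\,,\,\cdot\,\}$ correspond respectively to (i) the barrier staying below the touchdown threshold on $D$, (ii) ensuring that the residual lifespan $T_f-t_0(\tau)$ is short enough that the $\tau$-margin is not consumed before quenching (using the bound $T\leq\overline T$ from $\mu>\mu_0(p)$), and (iii) the a~priori requirement $\lambda<1$ needed to keep $\tilde u<1$.

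\textbf{Main obstacle.} The most delicate part will be the bookkeeping around the piecewise definition of $a_{\beta,K}$: enforcing $C^1$ matching at $r_0,r_1$ while simultaneously ensuring that the pointwise differential inequality for $J_a$ holds across each interface pins down the exact transcendental values of $A_0,\ldots,A_3,D_1,D_2$ and forces the admissibility constraints $\delta_1+\delta_2\leq 1$ and $K\mu\beta^2\leq (p(p+2)-K)/p$ defining $\mathcal{A}$ in (\ref{condition on parameters 2c}). A secondary technical point is verifying that the improved Duhamel factor $1+p\mu S(t,\beta)\Lambda(t,r)$ is indeed compatible with the maximum principle argument, which requires monotone handling of $Y(s)$ and careful estimation of the convolution integral in (\ref{defLambda}). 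Once this is in place, the optimization over $(\beta,K,\tau,\lambda)\in\mathcal{A}$ is finite-dimensional, and the improvement over Theorems~\ref{global result dim 1}-\ref{global result dim 1a} comes precisely from the iterated Duhamel term $\Lambda$ and the $\lambda$-degree of freedom, which are the sources of the enhanced numerical values $\overline{\rho}_2$ reported in Table~\ref{table intro}.
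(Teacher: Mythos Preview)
Your overall architecture---a refined type~I estimate via a maximum-principle auxiliary function, followed by Lemma~\ref{basic supersol}---matches the paper's. However, several of your descriptions of the mechanism are off in ways that would derail an actual write-up.

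First, the Duhamel factor $1+p\mu S(t,\beta)\Lambda(t,r)$ is \emph{not} an improvement of a lower bound on $1-u$. It comes from bootstrapping the lower bound on $u_t$: the function $v=u_t$ solves $v_t-v_{xx}=pf(1-u)^{-p-1}v\ge 0$ with $v(0,\cdot)=f$, and the variation-of-constants formula for this equation, combined with a preliminary \emph{upper} estimate on $(1-u)^{p+1}$ (this is where $Y(s)$ enters), yields $u_t\ge (1+p\mu S\Lambda)\,\mu S\,e^{t\Delta_\R}\chi_{I_0}$ (Lemmas~\ref{low bound u_t2}--\ref{ut estimate}). This improved $u_t$ bound is what feeds the initial condition $J(t_0,\cdot)\ge 0$ for the auxiliary function $J=u_t-\eps\mu\,a\,h(u)$, with $h(u)=(1-u)^{-p}+K(1-u)$ (i.e.\ $q=1$ here, not $q=0$ as in Theorems~\ref{global result dim 1}--\ref{global result dim 1a}).

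Second, $W_{\tau,K,\lambda}$ is not part of a supersolution barrier for $1-u$ on $D$; it is an \emph{upper} bound for $h(u(t_0,x))$, obtained from the $\cosh$-type supersolution of Lemma~\ref{u upper estimate} (for the full nonlinear problem, not a linearisation). The admissible $\eps$ is the infimum over $I_\beta$ of $u_t(t_0,\cdot)/\bigl(h(u(t_0,\cdot))\,a\bigr)$, and the two refinements act on the numerator (larger via $\Lambda$) and the denominator (smaller via $W$) respectively; this is precisely what $G^*$ in (\ref{defG*}) records. Lemma~\ref{basic supersol} is then applied on $D$ with its own, much simpler, comparison function $y(t)\psi(x)$; neither $W$ nor $a$ appears there.

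Third, the reason $\lambda$ sits inside the $\min$ in (\ref{defrho1c}) is not ``to keep $\tilde u<1$''. It is that the $\cosh$ supersolution of Lemma~\ref{u upper estimate} requires $f\le\lambda\mu$ on a two-sided neighbourhood of the bump (cf.\ assumption~(\ref{smallnessf2}) in Proposition~\ref{type 1 quantitative1b}); hence one must have $\rho\le\lambda$ for the smallness hypothesis $f<\rho\mu$ on $D$ to supply that input. This self-referential constraint is also why the statement of Theorem~\ref{global result dim 1c} is global (smallness on all of $D$), unlike the purely local Theorems~\ref{global result dim 1}--\ref{global result dim 1a}.
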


We will see in our numerical examples that formula \eqref{defrho1c} in practice simplifies 
to $\rho\approx \frac12 G^*$ once the parameters $\tau,\beta,K,\lambda$ have been selected 
(cf.~Remark~\ref{disclimitations}, and see Figure~\ref{FigGr} for a plot of the RHS of \eqref{defG*}
 as a function of $r$).

On the other hand, whereas, in Theorems~\ref{global result dim 1}-\ref{global result dim 1a}, 
excluding touchdown on a single interval required the smallness condition to be only assumed in that interval,
this is no longer the case here, due to additional arguments in the proof
 (see Remark~\ref{global statement remark}(i) for details).
 For simplicity we thus made the smallness assumption in the global form \eqref{condition 1 on f thm21}.

We now give the corresponding global result in the case of multi-bump permittivity profiles
(note that Theorems~\ref{global result dim 1}-\ref{global result dim 1a}
 were applicable to multi-bump profiles as well, in view of their local character).
 
\begin{theorem}\label{global result dim 1c2} 
Consider problem (\ref{quenching problem}) with $p>0$ and $\Omega=(-R,R)$,
let $-R<x_m<\dots<x_0<R$ for some $m\ge 1$, and assume
$$d_0:= \displaystyle\min_{0\le i\le m} R-|x_i|-1>0,\qquad 
d_1:=\displaystyle\min_{0\le i\le m-1}\textstyle \frac12 |x_i-x_{i+1}|-1>0.$$
Then the result of Theorem~\ref{global result dim 1c} remains valid with
$$J:= \displaystyle\bigcup_{0\le i\le m} \bigl[x_i-1,x_i+1\bigr],\qquad
D:= \overline\Omega\setminus  \displaystyle\bigcup_{0\le i\le m} \bigl(x_i-1-d,x_i+1+d\bigr),$$
and $d_0$ replaced with $d_2:=\min(d_0,d_1)$
 in formulas \eqref{Th 6.2 hypothesis} and \eqref{condition on parameters 2c}.
\end{theorem}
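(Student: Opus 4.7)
The plan is to reduce the multi-bump statement to a collection of single-bump estimates by localizing the argument of Theorem~\ref{global result dim 1c} around each bump center $x_i$. The crucial point is that the spatial cut-off $a_{\beta,K}(r)$ appearing in the auxiliary function behind Proposition~\ref{type 1 quantitative} is supported in $[-(1+\beta),1+\beta]$. Once centered at $x_i$, it is supported in $(x_i-1-\beta,x_i+1+\beta)$, so the auxiliary functions attached to different bumps do not interact provided $\beta$ is chosen small enough, and each of them ``sees'' only its own bump through the condition $f\ge \mu$ on $[x_i-1,x_i+1]$.

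Two geometric constraints on $\beta$ then appear naturally. First, each localized support $(x_i-1-\beta,x_i+1+\beta)$ must be contained in $\Omega=(-R,R)$, which requires $\beta<R-|x_i|-1$ for every $i$, that is $\beta<d_0$. Second, the supports centered at two consecutive bumps $x_{i+1}<x_i$ must be disjoint with enough room to contain a portion of the region where $f<\rho\mu$; using the midpoint $m_i=\tfrac12(x_i+x_{i+1})$ as a natural separator, we need $1+\beta\le x_i-m_i$ and $1+\beta\le m_i-x_{i+1}$, which is precisely $\beta\le\tfrac12|x_i-x_{i+1}|-1$, i.e.~$\beta\le d_1$. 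The two conditions together give $\beta<d_2=\min(d_0,d_1)$, which is exactly the replacement in the admissible set $\mathcal{A}$ announced in the statement.

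Having fixed such a $\beta$, I would then run the proof of Theorem~\ref{global result dim 1c} independently in a neighborhood of each $x_i$: the role played in the single-bump case by $\Omega$ (and by the heat-semigroup comparison of Appendix~A on an interval of half-width $d_0+1$ around $x_0$) is played here, for the bump $x_i$, by the sub-interval of half-width $d_2+1$ around $x_i$, cut off either by $\partial\Omega$ or by the neighboring midpoints $m_i,m_{i-1}$. Since at such a midpoint the cut-off functions coming from the neighboring bump vanish and the configuration is symmetric, no boundary contribution appears, and the type~I temporal estimate together with the supersolution criterion (Lemma~\ref{basic supersol}) rules out touchdown in $\overline\Omega\setminus(x_i-1-d,x_i+1+d)$ intersected with the zone of influence of~$x_i$. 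As $D$ is covered by these zones, combining the finitely many local conclusions yields the no-touchdown property on all of $D$.

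The main obstacle will be this last, mostly technical step: checking that the heat-semigroup comparison and cut-off estimates proved in the single-bump setting on an interval of half-width $d_0+1$ transfer verbatim to sub-intervals of half-width $d_2+1$ inside $\Omega$. This is a purely geometric restriction built into the admissible set through $\beta<d_2$, so it requires no new analytic estimate; once it is in place, the combinatorial gluing of the local conclusions is immediate from the compact support of the auxiliary functions.
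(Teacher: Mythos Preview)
Your overall strategy—localize the single-bump argument around each $x_i$ and combine—is exactly what the paper does. But two of your technical justifications are off, and one of them matters.

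First, the constraint $\beta<d_1$ does \emph{not} come from requiring the cut-off supports $(x_i-1-\beta,x_i+1+\beta)$ to be disjoint. Each application of the type~I estimate (Proposition~\ref{type 1 quantitative1b}) uses its own auxiliary function $J$ centered at a single $\xi=x_j$; these functions never see each other and no disjointness is needed. The real reason $d_1$ enters is the improved upper bound on $u$ from Lemma~\ref{u upper estimate}, which feeds into the factor $W_{\tau,K,\lambda}(r)$ in $G^*$. That lemma requires $f<\lambda\mu$ on an interval of the form $(\xi+1+d,\xi+1+e)$ on \emph{each side} of the bump (cf.~\eqref{smallnessf2}), and in the multi-bump setting this interval must stay inside $D$, i.e.\ avoid the next bump's enlarged interval $(x_{j-1}-1-d,x_{j-1}+1+d)$. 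This forces $e\le 2d_1-d$, and then the condition $\beta<\bar d=\min\bigl(R-1-|\xi|,(d+e)/2\bigr)$ in Proposition~\ref{type 1 quantitative1b} gives $\beta<d_1$. So $d_1$ is a constraint coming from the \emph{smallness} side of the hypothesis, not from the geometry of the cut-offs.

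Second, there is no need to replace $\Omega$ by sub-intervals of half-width $d_2+1$ around $x_i$ in the heat-semigroup comparison. The paper applies Proposition~\ref{prop semigroup comparison} on the full domain $\Omega$ for each bump $x_j$; since $R-|x_j|-1\ge d_0$ and $S(t,\beta)$ is monotone in this quantity, the single constant $d_0$ (not $d_2$) suffices in $S$. Your sub-interval approach would work by domain monotonicity of the Dirichlet semigroup, but it degrades the constant unnecessarily and is not what the statement asks for: the replacement $d_0\mapsto d_2$ is only made in \eqref{Th 6.2 hypothesis} and \eqref{condition on parameters 2c}, i.e.\ in the range of $\beta$, not in $S$.

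Once these two points are corrected, the argument is as you outline: apply Proposition~\ref{type 1 quantitative1b} with $\xi=x_j$ and $e=2d_1-d$ to get the type~I estimate at each boundary point of each gap $\mathcal D$, then invoke Lemma~\ref{basic supersol} on $\mathcal D$.
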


\subsection{Weaker conditions on $\mu$ and $d_0$}

We next state a variant of Theorems~\ref{global result dim 1}-\ref{global result dim 1a},
which is valid under less restrictive conditions on $\mu$  and for any $d_0>0$.
It actually allows one to handle values of $\mu$ which are close to the reference pull-in voltage
(cf.~Discussion~\ref{disc1}(e)).
For instance, in the case $p=2$, for which the reference pull-in voltage is known to be comprised between
$8/27 \sim 0.30$ and $\pi^2/27\sim 0.37$,
the following theorem only requires $\mu>\pi^2/27\sim 0.37$, 
whereas Theorems~\ref{global result dim 1}-\ref{global result dim 1a} 
and \ref{global result dim 1c}-\ref{global result dim 1c2} 
respectively required $\mu>2\pi^2/27\sim 0.73$ and $\mu>\pi^2/18\sim 0.55$ 
(along with additional restrictions on $d_0$).
The corresponding optimization problem for $\rho$ is similar to that 
in Theorems~\ref{global result dim 1}-\ref{global result dim 1a} 
(and simpler than in Theorems~\ref{global result dim 1c}-\ref{global result dim 1c2}), 
but it now has four instead of three parameters. 

\begin{theorem}\label{global result dim 1b}

Consider problem (\ref{quenching problem}) with $p>0$ and $\Omega=(-R,R)$. 	
Let either $x_0\in \Omega$ or $x_0, x_1\in \Omega$ with $|x_0|\ge |x_1|$,
and assume
\begin{equation}\label{extra conditions b1}
\mu > \mu_0(p)=\dfrac{p^p}{(p+1)^{p+1}}\dfrac{\pi^2}{4},\qquad d_0:=R-|x_0|-1>0.
\end{equation}
Then the conclusions of Theorems~\ref{global result dim 1}-\ref{global result dim 1a} remain valid, where
$\rho$ can now be chosen as the solution of the following optimization problem.
\begin{equation}\label{defrho1b}
\rho= \underset{ ( \tau,\beta,K,\eta) \in \mathcal{A}}{\rm Sup}\  
\min \left\lbrace \rho_1( \tau,\beta,K,\eta),\dfrac{1}{p+1} \dfrac{\tau^{p+1}}{(T-t_0(\tau))\mu} \right\rbrace
\end{equation}
where
$$
\rho_1( \tau,\beta,K,\eta) =  \dfrac12\left( \dfrac{\beta-d}{\beta}\right)^{p+1}
 \min \biggl\{ \dfrac{S(\overline T,0)}{K+\eta^{-p}} G(\overline T, \beta,K,\eta), 
\dfrac{S(t_0,\beta)}{K+\tau^{-p}}
\min \Bigl[  H\bigl(t_0,\beta\bigr), G\bigl(t_0,\beta,K,\eta\bigr)\Bigr] \biggr\},
$$
\begin{equation}\label{condition on parameters 2}
\mathcal{A}=
\biggl\{( \tau,\beta,K,\eta)\in (d,d_0)\times(0,\infty)\times (0,1)^2, \ K\ge \frac{p\eta}{\mu\beta^2}-\frac{1}{(p+1)\eta^p},
\ \delta(\beta,K,\eta)\leq 1 \biggr\}.
\end{equation}
Here we set
$$t_0(\tau) = \textstyle\frac{1-\tau^{p+1}}{(p+1)\| f\|_\infty},\ \overline{T} = \textstyle\frac{1}{(p+1)(\mu-\mu_0(p))}, \ \,
L=1+(p+1)K\eta^p,\ \, \Gamma=\sqrt{\textstyle\frac{(p+1)\eta L}{pK\mu\beta^2}},\ \,
 A = \arctan\Gamma,\ \, \alpha =1+ \textstyle\frac{p}{L},$$
the function $H$ and $S$ are defined by (\ref{H and G}) and the functions $G, \delta$ are defined by
\begin{equation}\label{H and G2}
\begin{array}{ll}
&\delta(\beta,K,\eta) = A(1+K\eta^p)\sqrt{\frac{(p+1)\eta}{pLK\mu}},\\
\noalign{\vskip 2mm}
&G(t,\beta,K,\eta) = (\Gamma^2+1)^{-\alpha/2}\ \underset{0<x<1}{\inf} \dfrac{\text{\rm erf} \left( \frac{2-(1-x)\delta}{2\sqrt{t}}\right) + 
\text{\rm erf} \left( \frac{(1-x)\delta}{2\sqrt{t}}\right) }{\cos^\alpha (Ax)}.
\end{array}
\end{equation}
\end{theorem}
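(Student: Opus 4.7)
The strategy mirrors that of Theorems \ref{global result dim 1}--\ref{global result dim 1a} sketched in Section 4: combine the type I quantitative touchdown estimate (Proposition~\ref{type 1 quantitative}) with the no-touchdown supersolution criterion (Lemma~\ref{basic supersol}). The novelty, forced by the weaker hypothesis $\mu>\mu_0(p)$ in place of $\mu>\mu_1(p)=2\mu_0(p)$, is that the type I estimate alone near the quenching time is no longer enough: it must be supplemented by a deterministic a priori upper bound on $T_f$, and the family of auxiliary functions must be enlarged by an additional scaling parameter $\eta\in(0,1)$. These two ingredients are exactly what produces the four parameters $(\tau,\beta,K,\eta)$ in \eqref{defrho1b} and the inner minimum in $\rho_1$.

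\emph{Step 1 (a priori bound on the quenching time).} Under \eqref{extra conditions b1}, a direct comparison with a Dirichlet sub-eigenfunction on the unit interval where $f\geq\mu$ (as recalled in Discussion~\ref{disc1}(e)) gives
$$T_f\leq \overline{T}=\frac{1}{(p+1)(\mu-\mu_0(p))}.$$
This bound is essential because, unlike under $\mu>\mu_1(p)$, the outer localization estimate cannot rely exclusively on the type I blow-up rate: the supersolution on $D$ must be controlled on the full interval $[0,T_f]$, and $\overline T$ supplies the missing uniform horizon.

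\emph{Step 2 (four-parameter type I estimate).} I would apply Proposition~\ref{type 1 quantitative} to the enlarged multi-parameter family, where the $\eta$-rescaling modifies the cut-off profile $a(r)$ and yields the new constants $L=1+(p+1)K\eta^p$, $\Gamma$, $\alpha$, $\delta$ in \eqref{H and G2}. The admissibility conditions $K\geq \frac{p\eta}{\mu\beta^2}-\frac{1}{(p+1)\eta^p}$ and $\delta(\beta,K,\eta)\leq 1$ are precisely the analogues of those in $\mathcal A$ from Theorem~\ref{global result dim 1}, written at parameter $\eta$ rather than at $\eta=1$. Evaluating the resulting pointwise estimate at the two reference times $t_0(\tau)$ (near quenching) and $\overline T$ (early horizon) gives two distinct upper bounds on $u(\cdot,t)$ over $\partial J$, involving respectively the $H,G$-type and pure $G$-type infima. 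The second term $\frac{1}{p+1}\frac{\tau^{p+1}}{(T-t_0(\tau))\mu}$ in \eqref{defrho1b} is what ensures these two estimates are mutually compatible via the rate bound $T-t_0(\tau)\leq \overline T - 0$ in the spirit of the type I ODE.

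\emph{Step 3 (supersolution in $D$ and optimization).} I would then invoke Lemma~\ref{basic supersol} on the outer region $D$, using the bounds from Step 2 as boundary data on $\partial J$ and $0$ as initial data. The supersolution is constructed from a heat-kernel representation on $\Omega$, and the smallness condition $f<\rho\mu$ ensures that it remains strictly below $1$ throughout $[0,T_f]$. This gives $u<1$ on $\overline D \times [0,T_f]$, ruling out touchdown in $D$. The ratio $\rho$ then emerges, by taking the best possible choice of the four parameters, as the value of the optimization problem \eqref{defrho1b}, with the two terms in $\rho_1$ corresponding respectively to matching the supersolution at time $\overline T$ (with parameter $\eta$) and at time $t_0(\tau)$ (with parameter $\tau$). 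The two-bump case is handled exactly as in Theorem~\ref{global result dim 1a} by symmetrizing the argument across the midpoint between the bumps.

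The main obstacle I expect is the algebraic juggling required to verify that the cut-off function $a_{\beta,K,\eta}(r)$ (the $\eta$-generalization of $a_{\beta,K}$) satisfies the relevant parabolic differential inequality across all pieces of its piecewise definition, and that the admissibility set $\mathcal A$ in \eqref{condition on parameters 2} is nonempty in the regime $\mu\in(\mu_0(p),\mu_1(p))$ — this is precisely where choosing $\eta<1$ must compensate for the loss in $\mu$, and it is the place where the proof genuinely departs from those of Theorems \ref{global result dim 1}--\ref{global result dim 1a}.
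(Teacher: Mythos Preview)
Your overall strategy is correct and matches the paper's: a four-parameter type~I estimate (Proposition~\ref{type 1 quantitative1}, the $\eta$-generalization of Proposition~\ref{type 1 quantitative}) combined with the no-touchdown criterion of Lemma~\ref{basic supersol}, with $\eta<1$ used to make the admissibility set $\mathcal A$ nonempty under the weaker hypothesis $\mu>\mu_0(p)$. However, your attribution of the individual terms in \eqref{defrho1b} is garbled in a way that would derail the actual computation.

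First, $\overline T$ is not an ``early horizon'': it is an \emph{upper bound} on $T$ (Lemma~\ref{estimates for T}(ii)). Its role is not that one ``evaluates the type~I estimate at time $\overline T$''. Rather, when $\eta<1$ the auxiliary function $J$ in \eqref{defJLambda} no longer satisfies the parabolic inequality on the whole of $\Sigma$: in the subregion $\Sigma_3^\eta=\{u<1-\eta\}\cap([t_0,T)\times I_0)$ one must verify $J\ge 0$ directly, and this is precisely the condition $\eps\le\eps_2$ in \eqref{epsilon 1}. That condition involves an infimum over $t\in[t_0,\overline T]$, which by the monotonicity \eqref{Gmonot1} is attained at $t=\overline T$; this is the origin of the first term $\frac{S(\overline T,0)}{K+\eta^{-p}}G(\overline T,\beta,K,\eta)$ inside $\rho_1$. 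The second term in $\rho_1$ is the usual $\eps_1$-bound at time $t_0$, exactly as in Theorems~\ref{global result dim 1}--\ref{global result dim 1a}. So the two terms in $\rho_1$ are the two constraints $\eps\le\min(\eps_1,\eps_2)$ from Lemma~\ref{control Theta}, not two separate evaluations of the type~I estimate.

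Second, the outer term $\frac{1}{p+1}\frac{\tau^{p+1}}{(T-t_0(\tau))\mu}$ is not a compatibility condition between time scales. It is simply the second branch of hypothesis \eqref{hypf basic supersol} in Lemma~\ref{basic supersol}. In Theorems~\ref{global result dim 1}--\ref{global result dim 1a} this branch was automatically dominated by the first one thanks to the restriction $\tau\ge\frac{\mu}{2\mu-\mu_1}$ (cf.~\eqref{simplification claim}), which in turn required $\mu>\mu_1(p)$. Under the weaker assumption $\mu>\mu_0(p)$ no such simplification is available, so the term must be retained explicitly in the optimization.

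With these corrections, your Steps~1--3 become exactly the paper's argument: show $\mathcal A\ne\emptyset$ by taking $\eta$ small (this is the compensation you correctly anticipate in your last paragraph), apply Proposition~\ref{type 1 quantitative1} with $\eps=\min(\eps_1,\eps_2)$ to obtain the type~I estimate on $\Gamma$, then invoke Lemma~\ref{basic supersol}.
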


 In Table~\ref{tableex2} below, for the physical case $p=2$, we present some examples of numerical lower estimates of the threshold ratio $\rho$, 
based on Theorem~\ref{global result dim 1b}, for values of $\mu$ close to the reference pull-in voltage
 ($0.37<\mu<0.73$), for which Theorems~\ref{global result dim 1}-\ref{global result dim 1a} are not applicable.

\begin{table}[h]
\begin{center}
\begin{tabular}{|cccc|c|}
\hline
$\mu$ & $\|f\|_\infty$   & $d$   & $d_0$   & $\overline{\rho}$   \\
\hline   
0.7   & 0.8              & 0.01  & 8       & \textbf{0.0815}                  \\
\hline
0.6     & 0.65       & 0.05  & 10    & \textbf{0.0714}                  \\
\hline
0.5     & 0.6          & 0.001 & 6     &   \textbf{0.0137}                  \\
\hline
0.5     & 0.5            & 0.01 & 7    & \textbf{0.0228}                  \\
\hline
\end{tabular}
\end{center}
\caption{Examples for Theorem~\ref{global result dim 1b} with $p=2$.}
\label{general approx table intro}
\label{tableex2}
\end{table}


\section{Type~I estimate and auxiliary optimization problem}

Following the approach in~\cite{GS} and \cite{ES2}, a key ingredient in 
the proofs of Theorems~\ref{global result dim 1}, \ref{global result dim 1a} and
\ref{global result dim 1c}-\ref{global result dim 1b} is the following type~I estimate for $u$ away from the boundary, which we here refine in a nontrivial way in order to allow good quantitative estimates.

\subsection{The estimate and the auxiliary optimization problem for Theorems~\ref{global result dim 1} and \ref{global result dim 1a}}

\begin{proposition}\label{type 1 quantitative}
Let $R>1$, $x_0\in\Omega=(-R,R)$ with $d_0:=R-1-|x_0|>0$, and $\mu > \mu_0(p)$.
Assume that $f$ satisfies 
\begin{equation}\label{condition 3 on f}
f\ge \mu  \quad\hbox{ in $(x_0-1,x_0+1)$}
\end{equation}
and let $u$ be the solution of problem~(\ref{quenching problem}).
Let $d\in (0,d_0)$, $\tau\in(0,1)$.
Then the touchdown time $T$ of $u$ verifies 
\begin{equation} \label{def-t0Tbar}
 T>t_0:=\dfrac{1-\tau^{p+1}}{(p+1)\| f\|_\infty} 
\end{equation}
and $u$ satisfies the  type~I estimate
\begin{equation} \label{typeIest}
\big[1 - u\big(t,x_0\pm(1+d)\big)\big]^{p+1} \geq  (p+1) \overline\eps \mu (T-t)
\quad\hbox{ for all $t\in [t_0,T)$,}
\end{equation}
where
\begin{equation}\label{def-epsilonbar}
\overline\eps= \sup_{(\beta,K)\in\mathcal{A}_1}\  \hat\eps(\beta,K),\qquad
\hat\eps(\beta,K) = \dfrac{1}{2}\left(\dfrac{\beta-d}{\beta}\right)^{p+1}
 \dfrac{S(t_0,\beta)}{K+\tau^{-p}} \min \bigl\{ H(t_0,\beta), G(t_0,\beta,K) \bigr\},
\end{equation}
\begin{equation}\label{defCalA2}
\mathcal{A}_1:=
\left\{ (\beta,K)\in (d,d_0)\times (0,\infty),\ \ K\ge \frac{p}{\mu\beta^2}-\frac{1}{p+1},
\ \ \delta(\beta,K) \leq 1\right\},
\end{equation}
and $H,G,S,\delta$ are defined in (\ref{H and G}).
\end{proposition}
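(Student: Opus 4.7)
The plan is to prove \eqref{typeIest} by combining a heat-semigroup lower bound on $u$ on the bump with a maximum principle argument applied to a Friedman--McLeod type auxiliary function of the form $J\sim u_t - \hat\eps\,\Phi(u)\,a_{\beta,K}(r)$, where $\Phi$ is a nonlinear combination of $(1-u)^{-p}$ and a linear correction weighted by $K$, and $a_{\beta,K}$ is a piecewise smooth cut-off carrying the free parameters $\beta,K$. The freedom in $(\beta,K)$, together with the time parameter $\tau$ entering through $t_0$, is what, after optimization, reduces the derivation of the best type~I constant to the supremum \eqref{def-epsilonbar}.

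\textbf{Step~1: Heat-semigroup lower bound for $u$.} Since $u<1$ on $[0,T)$, hypothesis \eqref{condition 3 on f} gives $u_t-u_{xx}\ge \mu$ on $(x_0-1,x_0+1)\times(0,T)$. Comparison with the Dirichlet semigroup on the enlarged interval $(x_0-1-d_0,\,x_0+1+d_0)\subset\Omega$, using the quantitative semigroup estimates that the authors state in an appendix, produces, for $t\in[0,t_0]$ and $|x-x_0|\le 1+\beta$, a lower bound of the form $u(t,x)\ge \mu t\,S(t,\beta)$: the exponential prefactor in $S$ is the first Dirichlet eigenvalue decay on an interval of length $2(d_0+1)$, while $[1-e^{-d_0(d_0-\beta)/t}]$ captures the Gaussian mass reaching $\{r\le 1+\beta\}$ from the bump. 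A companion ODE comparison with $v'=\|f\|_\infty v^{-p}$ at $x_0$ gives $1-u(t_0,x_0)\ge \tau$, which in particular yields $T>t_0$ as asserted in \eqref{def-t0Tbar}.

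\textbf{Step~2: Auxiliary function, cut-off, and maximum principle.} With $r=|x-x_0|$ take $J$ as above, where $a_{\beta,K}$ is piecewise smooth, nonnegative, supported in $[0,1+\beta]$, consisting of a normalized $D_1\cos^\alpha$ profile on a core around $r=1$ and of $(1-\beta^{-1}(r-1))^{p+1}$ on $(1,1+\beta]$: essentially the two-piece version of the three-piece cut-off displayed in Theorem~\ref{global result dim 1c}. A direct calculation using \eqref{quenching problem} shows that $J$ is a subsolution of a linear parabolic inequality $J_t-J_{xx}\ge c_1(u)J$ as long as $a_{\beta,K}$ satisfies piecewise an ODE inequality of the type $a''+c\,a^{1+1/p}\le 0$ with the correct matching data at $r=1$. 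The constraints $K\ge p/(\mu\beta^2)-1/(p+1)$ and $\delta(\beta,K)\le 1$ in \eqref{defCalA2} are precisely the existence/compatibility conditions for such a cut-off on $[0,1+\beta]$, and the parameters $L,\Gamma,A,\alpha$ of \eqref{H and G} are the geometric parameters of its cosine core.

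\textbf{Step~3: Sign of $J$ on the parabolic boundary, integration, and main obstacle.} By construction $J$ vanishes on the lateral boundary $\{r=1+\beta\}$; the remaining task is to verify $J(t_0,\cdot)\ge 0$ on $\{r\le 1+\beta\}$. Step~1 yields $u_t(t_0,x)\ge \mu(1-u(t_0,x))^{-p}$ on the bump, together with an upper bound for $(1-u(t_0,x))^{-p}$ controlled by $S(t_0,\beta)$; dividing by $a_{\beta,K}$ on its two analytic pieces then reduces $J(t_0,\cdot)\ge 0$ to the single pointwise condition $\hat\eps\le \tfrac12\bigl(\tfrac{\beta-d}{\beta}\bigr)^{p+1}\tfrac{S(t_0,\beta)}{K+\tau^{-p}}\min\{H(t_0,\beta),G(t_0,\beta,K)\}$, with $H$ the infimum on the outer annulus $(1,1+\beta]$, $G$ the infimum on the cosine core, and the $(\Gamma^2+1)^{-\alpha/2}$ prefactor in $G$ coming from the normalization $D_1$. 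The maximum principle then propagates $J\ge 0$ to $[t_0,T)\times\{r\le 1+\beta\}$; evaluating at $r=1+d$ (where $a_{\beta,K}(1+d)=((\beta-d)/\beta)^{p+1}$) and integrating in time from $t$ to $T$ produces \eqref{typeIest}, after which taking $\sup_{\mathcal{A}_1}$ delivers $\overline\eps$. The main obstacle will be carrying out the two-piece construction of $a_{\beta,K}$ so that all three requirements---the nonlinear ODE inequality on the cosine core, the $C^1$ matching at $r=1$ with the outer polynomial, and vanishing exactly at $r=1+\beta$---can be satisfied simultaneously; it is this compatibility problem that generates the admissibility conditions defining $\mathcal{A}_1$ and dictates the precise form of the infima $H,G$ to be optimized in $\hat\eps(\beta,K)$.
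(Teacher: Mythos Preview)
Your overall architecture is right: the proof does go through an auxiliary function $J=u_t-\eps\mu\,a_{\beta,K}(|x-x_0|)\,h(u)$ with $h(u)=(1-u)^{-p}+K$, a piecewise cut-off (polynomial outside the bump, cosine-power inside), a parabolic inequality for $J$, and then integration in time at $r=1+d$. The constraints in $\mathcal{A}_1$ are indeed the existence conditions for the cut-off, and the split $\min\{H,G\}$ does come from the inner/outer pieces of $a_{\beta,K}$.

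However, there is a real gap in Step~3, and it propagates back to a confusion in Step~1. To verify $J(t_0,\cdot)\ge 0$ on the whole support $\{r\le 1+\beta\}$ you need a \emph{lower bound on $u_t$}, not on $u$. Your Step~1 bounds $u$ from below (via $u_t-u_{xx}\ge\mu$ on the bump), but then in Step~3 you invoke ``$u_t(t_0,x)\ge \mu(1-u(t_0,x))^{-p}$ on the bump'' as if it came from Step~1. It does not: on the bump $u_t=u_{xx}+f(1-u)^{-p}$, and $u_{xx}$ has no sign (it is $\le 0$ at spatial maxima of $u$). Moreover, the inequality $J(t_0,\cdot)\ge 0$ must also hold on the \emph{outer} annulus $1<r<1+\beta$, which is \emph{off} the bump, so a lower bound on $u_t$ valid only where $f\ge\mu$ would not suffice anyway. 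The paper's mechanism is different: differentiating the equation, $v:=u_t$ satisfies $v_t-v_{xx}=pf(1-u)^{-p-1}v\ge 0$ with $v(0,\cdot)=f\ge \mu\chi_{I_0}$, hence by comparison $u_t(t,x)\ge \mu\,e^{t\Delta_\Omega}\chi_{I_0}(x)$ throughout $\Omega$. It is this heat-semigroup quantity, compared down to $S(t_0,\beta)\,e^{t_0\Delta_{\mathbb R}}\chi_{(-1,1)}$ via the appendix estimate, that produces the error-function numerators in $H$ and $G$.

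Two smaller misattributions follow from this. First, the factor $K+\tau^{-p}$ does not come from any semigroup estimate but from the \emph{upper} ODE comparison $\|u(t_0)\|_\infty\le 1-\tau$, which gives $h(u(t_0,x))\le \tau^{-p}+K$. Second, $S(t_0,\beta)$ is not ``an upper bound for $(1-u(t_0,x))^{-p}$'' but the Dirichlet-versus-whole-line semigroup comparison constant. Finally, the differential constraint on $a_{\beta,K}$ is not of the form $a''+c\,a^{1+1/p}\le 0$; on the outer piece it is $aa''=\tfrac{p}{p+1}(a')^2$ (whence the power $(1-\beta^{-1}(r-1))^{p+1}$), and on the inner piece $aa''=m(a')^2-Ma^2$ with $m,M$ depending on $K,\mu$ (whence the cosine power). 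Once you replace your $u_t$ argument by the $v=u_t$ comparison and keep track of these three bookkeeping points, the rest of your outline matches the paper's proof.
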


\begin{remark}
 Proposition~\ref{type 1 quantitative} (and the analogous Propositions~\ref{type 1 quantitative1} and \ref{type 1 quantitative1b} below), 
are of course useful only if $\mathcal{A}_1\neq\emptyset$
(since otherwise $\overline\eps=-\infty$ and (\eqref{typeIest}) is void). The condition $\mathcal{A}_1\neq\emptyset$
 will be checked when we apply these propositions in the proofs of 
  Theorems~\ref{global result dim 1}, \ref{global result dim 1a} and \ref{global result dim 1c}-\ref{global result dim 1b}.
 \end{remark}

For the proof of Proposition~\ref{type 1 quantitative} (and of Propositions~\ref{type 1 quantitative1} and \ref{type 1 quantitative1b}),
our strategy is to use a parametri\-zed auxiliary function of the form 
\begin{equation}\label{defJLambda}
J(t,x) = u_t - \eps \mu a_{\beta,K,\eta}(x -x_0)h(u)
\quad\hbox{ in $\Sigma := [t_0, T)\times I_\beta$}
\end{equation}
\begin{equation}\label{defhKq}
h(u)=(1-u)^{-p} + K(1-u)^{q},
\end{equation}
(cf.~Lemma~\ref{basic computation}), 
where $I_\beta$ is a subinterval of $\Omega$, 
namely $I_\beta:= [ x_0-1-\beta, x_0+1+\beta]$ with $\beta>d$, and $a_{\beta,K,\eta}$ 
is a suitable family of  cut-off functions with $a(1+\beta)=0$ (see in particular Figure~\ref{fig3}).
We shall assume for simplicity that $a$ is an even function, with 
\begin{equation}\label{amonotone}
\hbox{$a>0$ and $a'\le 0$ on $[0,1+\beta)$.}
\end{equation}
A key feature in order to reach good values of the threshold ratio $\rho$
is the possibility to optimize with respect to the various parameters which appear in the function $J$, namely:
$$\beta, t_0, q, K, \eps,$$
as well as $\eta$, which defines the subregions of $\Sigma$ in (\ref{DefSubsetsSigma}).

\subsection{Basic computation for the function $J$}

The basic computation for the function $J$ is contained in the following lemma.
This computation was already done in~\cite{GS} and \cite{ES2} for specific choices of the parameters $q$ and $K$
 (and we recall that earlier versions of functions of type $J$, without cut-off and perturbation terms,  
go back to \cite{Sp}, \cite{FM}, \cite{G1}).
In this paper, varying these parameters will be useful in the proof of Propositions~\ref{type 1 quantitative}, \ref{type 1 quantitative1} and \ref{type 1 quantitative1b}.

\begin{lemma}\label{basic computation}
Let $\omega$ be a subdomain of $\Omega$ and let 
$a\in C^2(\omega)$ be a positive function.
Let $u$ be the solution of  (\ref{quenching problem}), $t_0\in [0,T)$,
and let $J$ be given by (\ref{defJLambda}) in $(t_0,T)\times\omega$, 
where 
\begin{equation}\label{function h}
h(u) = (1-u)^{-p} + K(1-u)^{q},\quad\  0\le u<1, 
\end{equation}
with $q\in [0,1]$ and 
\begin{equation}\label{basic computation 1a}
0<K<\dfrac{p(p+1)}{q(1-q)}.
\end{equation}
Then 
\begin{equation}\label{basic computation 1}
J_t - J_{xx} - pf(x)(1-u)^{-p-1}J = \mu \eps  \Theta\quad\hbox{ in $(t_0,T)\times \omega$,} 
\end{equation}
where
\begin{equation}\label{basic computation 2}
\Theta = (p+q)K a(x) f(x) (1-u)^{-p+q-1} + ah''(u) u_x^2 + 2h'(u) a_x u_x + h(u) a_{xx}.
\end{equation}
Moreover, we have  $h''(u)>0$ for all $u\in [0,1)$ and
\begin{equation}\label{basic computation 3}
\Theta \geq \underbrace{(p+q) K a(x) f(x) (1-u)^{-p+q-1}}_{\tau_1} + \underbrace{h(u) a_{xx}(x)}_{\tau_2}
 - \underbrace{\dfrac{h'^2(u) a_x^2(x)}{a(x)h''(u)}}_{\tau_3}.
\end{equation}
\end{lemma}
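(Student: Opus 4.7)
The plan is to derive the identity (\ref{basic computation 1}) by a direct parabolic computation using the PDE satisfied by $u$, and then to obtain (\ref{basic computation 3}) from a single completion-of-square bound on the indefinite cross term. Differentiating (\ref{quenching problem}) in time, $v:=u_t$ solves the linearized equation
$$v_t - v_{xx} - pf(x)(1-u)^{-p-1} v = 0,$$
so writing $J = v - \varepsilon w$ with $w := \mu\, a(x)\, h(u)$, it suffices to compute $\mathcal{L}w := w_t - w_{xx} - pf(x)(1-u)^{-p-1} w$. I would expand $w_t$ and $w_{xx}$ via the product and chain rules and invoke $u_t - u_{xx} = f(x)(1-u)^{-p}$ to combine the $u_t$ term with an $u_{xx}$ term. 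The $f(x)$-contribution then reduces to
$$\mu\, a(x)\, f(x)\Bigl[h'(u)(1-u)^{-p} - p(1-u)^{-p-1} h(u)\Bigr].$$
Substituting the explicit form $h(u)=(1-u)^{-p}+K(1-u)^q$, the leading $(1-u)^{-2p-1}$ terms cancel and only $-K(p+q)(1-u)^{-p+q-1}$ survives. Combining with the remaining spatial and mixed terms yields $\mathcal{L}w = -\mu\Theta$ with $\Theta$ as in (\ref{basic computation 2}), whence (\ref{basic computation 1}) via $\mathcal{L}J = \mathcal{L}v - \varepsilon\mathcal{L}w = \mu\varepsilon\Theta$.

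For the positivity of $h''$, direct differentiation gives
$$h''(u) = p(p+1)(1-u)^{-p-2} + Kq(q-1)(1-u)^{q-2}.$$
Since $q\in[0,1]$, the second term is nonpositive. Factoring $(1-u)^{q-2}>0$ and using $(1-u)^{p+q}\le 1$ (valid since $0\le u<1$ and $p+q>0$), positivity of $h''$ reduces to $p(p+1) > Kq(1-q)$, which is precisely (\ref{basic computation 1a}).

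Finally, for (\ref{basic computation 3}), the only term in $\Theta$ of indefinite sign is the cross term $2h'(u)\, a_x\, u_x$. Since $a>0$ and $h''(u)>0$ by the previous step, completion of the square gives
$$a h''(u)\, u_x^2 + 2 h'(u)\, a_x\, u_x + \frac{h'(u)^2 a_x^2}{a h''(u)} = \Bigl(\sqrt{a h''(u)}\, u_x + \frac{h'(u)\, a_x}{\sqrt{a h''(u)}}\Bigr)^{2} \geq 0,$$
from which (\ref{basic computation 3}) follows by transposition. The main obstacle is the algebraic bookkeeping in the first step, where the precise form of $h$ is used to make the $(1-u)^{-2p-1}$ terms cancel; once that cancellation is identified, the rest of the argument is mechanical.
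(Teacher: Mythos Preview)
Your proof is correct and follows essentially the same approach as the paper: a direct parabolic computation exploiting the PDE for $u$, followed by completion of the square using $h''>0$. The only organizational difference is that you invoke the linearized equation $\mathcal{L}(u_t)=0$ and then compute $\mathcal{L}(\mu a h(u))$, whereas the paper computes $J_t-J_{xx}$ directly and then substitutes $u_t=J+\varepsilon\mu a h$; the underlying algebra (in particular the cancellation $g'h-h'g=(p+q)K(1-u)^{-p+q-1}$ with $g(u)=(1-u)^{-p}$) is identical.
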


\begin{proof}
We compute
\begin{eqnarray*}
J_t &=& u_{tt} -\eps  a(x)h'(u)u_t, \\
J_x &=&  u_{xt} - \eps \mu \big( a(x)h'(u)u_x + h(u) a_x(x)\big), \\
J_{xx} &=&  u_{xxt} - \eps  \mu \big( a(x)h'(u) u_{xx} + a(x)h''(u) u_x^2 
  + 2h'(u) a_x(x) u_x + h(u) a_{xx}(x) \big).
\end{eqnarray*}
Setting $g(u) = (1-u)^{-p}$ and omitting the variables $x,u$ without risk of confusion, we get
\begin{eqnarray*}
J_t -  J_{xx} &=& (u_t- u_{xx})_t - \eps \mu  ah'(u_t- u_{xx}) 
 +\eps \mu (ah''  u_x^2  + 2h' a_x u_x+ h a_{xx}) \\
               &=& fg'u_t - \eps \mu fah'g + \eps \mu (ah''  u_x^2 + 2h' a_x u_x + h a_{xx}).
\end{eqnarray*}
Using $u_t=J+\eps \mu ah$, we have
$$J_t- J_{xx} - fg'J = \eps \mu \Theta,$$
where
$$\Theta=fa(g'h-h'g)+ah''  u_x^2 + 2h'(u) a_x u_x + h a_{xx}.$$
On the other hand, we have
\begin{equation}\label{hprimeu}
 h'(u)=p(1-u)^{-p-1} - qK(1-u)^{q-1},
\end{equation}
hence
\begin{eqnarray*}
g'h - h'g &=& p(1-u)^{-p-1} [(1-u)^{-p} + K(1-u)^{q}] - (1-u)^{-p} [p (1-u)^{-p-1} - qK(1-u)^{q-1}] \\
          &=& (p+q)K(1-u)^{-p+q-1},   
\end{eqnarray*}
which yields (\ref{basic computation 2}). Also, owing to (\ref{basic computation 1a}), we have
\begin{equation}\label{hprimeprimeu}
\begin{array}{ll}
h'' &= [p(p+1) - q(1-q)K(1-u)^{p+q}](1-u)^{-p-2}
    \geq  [p(p+1)-q(1-q)K](1-u)^{-p-2} > 0.
\end{array}
\end{equation}
Finally, since $a>0$  in $\omega$, we may write
$$\Theta = (p+q)Ka f (1-u)^{-p+q-1} + h a_{xx} + ah'' \left[ u_x^2 + 2\dfrac{h' a_x u_x}{ah''}\right].$$
Since $u_x^2 + 2\dfrac{h' a_x u_x}{ah''}\geq -\dfrac{h'^2  a_x^2}{a^2(h'')^2}$, inequality~(\ref{basic computation 3}) follows.
\end{proof}

\begin{remark}
(a) We observe that no loss of information seems to occur from  inequality~(\ref{basic computation 3}).
Indeed, by (\ref{basic computation 2}), this inequality becomes an equality at any point $x$ such that
$ u_x+\frac{h' a_x}{h''a}=0$ i.e., $[\log (ah'(u)]_x=0$.
But since, in order to apply the maximum principle in the proofs of Proposition~\ref{type 1 quantitative} (and Propositions~\ref{type 1 quantitative1} and \ref{type 1 quantitative1b} below), 
the function $a$ will be required to vanish on $\partial\omega$,
such points $x$ must exist for each $t\in (0,T)$.

(b) The restriction $q\le 1$ is necessary to guarantee that the key term $h''(u) u_x^2$ in (\ref{basic computation 2}) remains positive.
Similarly, the positivity of the key term $\tau_1$ in (\ref{basic computation 3}) imposes $p+q>0$.
Although the values $q\in (-p,0)$ would be also admissible, we shall not consider them.
Indeed, when looking for quantitative estimates in  Section~6, they seem to lead to worse results due to smaller constant $p+q$
(and to more complicated expressions than $q=0$ or $q=1$).
\end{remark}

 We also recall the following simple lemma, that will be used in the sequel.

\begin{lemma}\label{estimates for T} 
Let $u$ be the solution of (\ref{quenching problem}). 

(i) We have $T\geq T_*:= \frac{1}{(p+1)\| f\|_{\infty}}$ and
\begin{equation}\label{compODE}
\|u(t)\|_{\infty} \leq y(t):= 1-\bigl(1-(p+1)\|f\|_\infty t\bigr)^{\frac{1}{p+1}},\quad \hbox{ for all } t\in[0,T_*].
\end{equation}

(ii) Assume that $I:=(x_0-1,x_0+1)\subset\Omega$ and $f\geq \mu\chi_I$, with 
$\mu>\mu_0(p):=\frac{p^p}{(p+1)^{p+1}}\frac{\pi^2}{4}$.
Then 
$$T\leq \overline T:= \dfrac{1}{(p+1)(\mu - \mu_0(p))}<\infty.$$
\end{lemma}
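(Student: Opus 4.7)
The plan is to treat the two parts independently. For (i), I would compare $u$ with the spatially constant function $y(t)$ given in \eqref{compODE}. A direct differentiation shows that $y$ solves the ODE $y'=\|f\|_\infty(1-y)^{-p}$ with $y(0)=0$, and that $y(t)<1$ on $[0,T_*)$. Since $u_t-u_{xx}=f(x)(1-u)^{-p}\le\|f\|_\infty(1-u)^{-p}$ and $u=0\le y$ on the parabolic boundary of $[0,T_*)\times\Omega$, the parabolic comparison principle (applied up to any time at which both $u$ and $y$ remain $<1$) yields $u(t,x)\le y(t)$ on $[0,T_*)\times\Omega$. This gives both the pointwise bound in \eqref{compODE} and, since $y(t)<1$ on $[0,T_*)$, the estimate $T\ge T_*$.

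For (ii), I would use an eigenfunction projection on $I:=(x_0-1,x_0+1)$. Let $\phi_1>0$ be the first Dirichlet eigenfunction of $-\partial_{xx}$ on $I$, with eigenvalue $\lambda_1=\pi^2/4$, normalized by $\int_I\phi_1\,dx=1$, and set $E(t):=\int_I u(t,x)\phi_1(x)\,dx$. Multiplying the equation by $\phi_1$ and integrating by parts twice on $I$, I would use the vanishing of $\phi_1$ on $\partial I$ together with $u\ge 0$ and the signs $\mp\phi_1'(x_0\pm 1)\ge 0$ to produce nonnegative boundary contributions, so that $\int_I u_{xx}\phi_1\,dx\ge\int_I u\phi_1''\,dx=-\lambda_1 E(t)$. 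The source term is bounded below using $f\ge\mu\chi_I$ and Jensen's inequality applied to the convex function $\sigma\mapsto(1-\sigma)^{-p}$ against the probability measure $\phi_1\,dx$, which yields $\int_I f(1-u)^{-p}\phi_1\,dx\ge\mu(1-E(t))^{-p}$. Combining these two estimates gives the differential inequality
\[
E'(t)\ \ge\ -\lambda_1 E(t)+\mu\bigl(1-E(t)\bigr)^{-p}\quad\hbox{on } [0,T).
\]

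To conclude, I would exploit the elementary identity $\max_{s\in[0,1)} s(1-s)^p=p^p/(p+1)^{p+1}$, attained at $s=1/(p+1)$, which gives $\lambda_1 E(1-E)^p\le\mu_0(p)$. Multiplying the differential inequality by $(1-E)^p$ then produces $E'(t)(1-E(t))^p\ge\mu-\mu_0(p)>0$. Integrating between $0$ and any $t<T$ yields
\[
\frac{1-\bigl(1-E(t)\bigr)^{p+1}}{p+1}\ \ge\ \bigl(\mu-\mu_0(p)\bigr)\,t,
\]
and, since the left-hand side is bounded above by $1/(p+1)$, this forces $t\le\overline T=1/\bigl((p+1)(\mu-\mu_0(p))\bigr)$, so that $T\le\overline T$. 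No step is really a serious obstacle; the only point requiring care is that the application of Jensen's inequality presupposes $u(t,\cdot)<1$ a.e.\ on $I$, which is automatic for $t<T$ by the definition of the maximal existence time.
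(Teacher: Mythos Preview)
Your proof is correct and follows the same approach as the paper. For (i) the paper invokes the comparison principle with the ODE solution $y(t)$ exactly as you do, and for (ii) the paper merely says ``a simple eigenfunction argument'' with a citation to \cite{ES2}; your write-up supplies precisely the details of that argument (first Dirichlet eigenfunction on $I$, Jensen's inequality, the identity $\max_{[0,1)} s(1-s)^p=p^p/(p+1)^{p+1}$, and integration of $E'(1-E)^p\ge\mu-\mu_0(p)$).
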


\begin{proof}
Since $y(t)$ is the solution of the ODE
\begin{equation}\label{ODE}
y'(t) = \|f\|_\infty(1-y(t))^{-p}, \quad t\in (0,T_\ast), 
\qquad\hbox{ with $y(0)=0$,}
\end{equation}
and $T_\ast$ is the maximal existence time for $y(t)$,
assertion (i) follows immediately from the comparison principle.

Assertion (ii) follows from a simple eigenfunction argument, see e.g., Lemma~2.2 in~\cite{ES2}.
\end{proof}

\subsection{Construction of the family of  cut-off functions $a(x)$ and parametrized type~I estimate}

The function $J$ needs to satisfy a basic parabolic inequality (cf.~\eqref{Sigma2eta1a} below).
In one space dimension, the study of this parabolic inequality can be made quite precise.
It actually leads to the following, natural and optimal, differential inequality for the function~$a(r)$:
\begin{equation}\label{diffineqa1}
a''(r) \geq a(r)\,F\Bigl(r,\dfrac{a'(r)}{a(r)}\Bigr),\qquad 0\le r<1+\beta,
\end{equation}
where
\begin{equation}\label{diffineqa2}
F(r,\xi)=
\left\lbrace \begin{array}{ll}
\displaystyle\sup_{u\in (0,1)}\left[\dfrac{h'^2(u)}{hh''(u)}\right]\,\xi^2,&r>1, \\
\displaystyle\sup_{u\in (1-\eta,1)}\left[\dfrac{h'^2(u)}{hh''(u)}\,\xi^2 - \dfrac{(p+q)K\mu}{(1-u)^{p+1-q}h(u)}\right],&r<1.
\end{array}\right.
\end{equation}
This is the contents of the following lemma which
gives a family of type~I estimates, corresponding to each admissible value of the parameters.
We note that only the choice $q=0$ and $\eta=1$ will be used in the proof 
of Proposition~\ref{type 1 quantitative} and Theorems~\ref{global result dim 1}-\ref{global result dim 1a}.
Other values of the parameters $q, \eta$ will be used in the  proofs of the results of Section~\ref{further results}.

\begin{lemma}\label{control Theta}
Let $R>1$, $x_0\in\Omega=(-R,R)$ with $d_0:=R-1-|x_0|>0$,
and $\mu > \mu_0(p)$.
Assume that $f$ satisfies \eqref{condition 3 on f}
and let $u$ be the solution of problem~(\ref{quenching problem}).
Let 
\begin{equation}\label{condparamcontrolTheta}
q\in [0,1],\quad \tau\in (0,1),\quad\eta\in (0,1],\quad \beta\in (0,d_0), \quad K>0
\end{equation}
and let $h(u)$ be defined by \eqref{defhKq}.
Set $I_0=(x_0-1,x_0+1)$, $I_\beta=(x_0-1-\beta, x_0+1+\beta)$ and
$$t_0 = t_0(\tau) = \dfrac{1-\tau^{p+1}}{(p+1)\| f\|_\infty}.$$
Assume that there exists a solution $a\in  W^{2,2}([0,1+\beta])$ of \eqref{diffineqa1}-\eqref{diffineqa2},
with \eqref{amonotone} and the boundary conditions 
\begin{equation}\label{diffineqa3}
a'(0)=0,\quad a(1+\beta)=0.
\end{equation}
Assume also that $\eps>0$ satisfies
\begin{equation}\label{epsilon 1b}
\eps \le \eps_1:=
\inf_{x\in I_\beta} \dfrac{e^{t_0\Delta_\Omega}\chi_{I_0}(x)}{h(u(t_0,x)) a(|x-x_0|)}
\end{equation}
where $e^{t\Delta_\Omega}$ denotes the Dirichlet heat semigroup on $\Omega$, and 
\begin{equation}\label{epsilon 1}
\eta=1\quad\hbox{or}\quad\eps \le \eps_2:=
\dfrac{1}{W(q,\eta,K)} \inf_{(t,x)\in [t_0,\overline T]\times I_0} \dfrac{e^{t\Delta_\Omega}\chi_{I_0}(x)}{a(|x-x_0|)},
\end{equation}
with
 $\overline T = \frac{1}{(p+1)(\mu - \mu_0(p))}$ and $W(q,\omega,K) := \displaystyle\sup_{u\in (0,1-\omega)} h(u)$.
Then
\begin{equation}\label{estTypeILemma}
(1-u(t,x))^{p+1} \geq (p+1) {\hskip 0.5pt}\eps {\hskip 0.5pt}\mu{\hskip 0.5pt} a(|x-x_0|) (T-t) \quad\hbox{ in $[t_0,T)\times I_\beta$}. 
\end{equation}
\end{lemma}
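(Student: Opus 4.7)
The strategy is to establish $J \geq 0$ on $\Sigma := [t_0,T)\times I_\beta$ by a maximum-principle argument, and then convert this pointwise inequality into the claimed type~I estimate \eqref{estTypeILemma} by time-integration. Recall from Lemma~\ref{basic computation} that $J$ satisfies the parabolic identity $J_t - J_{xx} - pf(x)(1-u)^{-p-1}J = \mu\eps\Theta$, so that we need $J\geq 0$ on the parabolic boundary of $\Sigma$ and $\Theta\geq 0$ in its interior. The key preliminary bound is obtained by differentiating \eqref{quenching problem} in $t$: the function $w := u_t$ satisfies a linear parabolic equation with nonnegative zeroth-order coefficient $pf(1-u)^{-p-1}$, homogeneous Dirichlet data on $\partial\Omega$, and initial value $f(\cdot)\geq \mu\chi_{I_0}$, so Duhamel's formula and the positivity of the source term yield
$$
u_t(t,x) \geq e^{t\Delta_\Omega}f(x) \geq \mu\, e^{t\Delta_\Omega}\chi_{I_0}(x),\qquad (t,x)\in[0,T)\times\Omega,
$$
with strict inequality for $t>0$, $x\in\Omega$ by the strong maximum principle. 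Since $a(1+\beta)=0$, this gives $J = u_t > 0$ on the lateral part $x=x_0\pm(1+\beta)$; on the initial slice $t=t_0$, condition \eqref{epsilon 1b} together with the above lower bound on $u_t$ yields $J(t_0,\cdot)\geq 0$.

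For the interior step, we need $\Theta \geq 0$. Writing $r=|x-x_0|$, estimate \eqref{basic computation 3}, the monotonicity of $a$, and the bounds on $f$ ($f\geq 0$ when $r>1$, resp.\ $f\geq\mu$ when $r<1$) allow one to rearrange the desired inequality $\Theta\geq 0$ as $a''(r)/a(r) \geq F(r, a'(r)/a(r))$, where $F$ is exactly the function in \eqref{diffineqa2}: for $r>1$ the term $\tau_1$ in \eqref{basic computation 3} is merely nonnegative, forcing the sup over $u\in(0,1)$; for $r<1$ the additional $-(p+q)K\mu(1-u)^{-p+q-1}/h(u)$ term in $F$ comes from bounding $\tau_1$ below by $(p+q)K\mu a(1-u)^{-p+q-1}$. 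Hence the hypothesis \eqref{diffineqa1} on $a$ is precisely what ensures $\Theta\geq 0$ for every $u$ in the admissible range. When $\eta=1$, this covers all $u\in(0,1)$, so $\Theta\geq 0$ holds pointwise on $\Sigma$, and the weak maximum principle (applied on $[t_0,T-\delta]\times I_\beta$ and letting $\delta\downarrow 0$ to accommodate the blow-up of the coefficient $pf(1-u)^{-p-1}$) yields $J\geq 0$ on $\Sigma$.

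The main obstacle is the case $\eta<1$, where the differential inequality only controls $u\in(1-\eta,1)$. I would argue by contradiction: set $t^* := \inf\{t\in[t_0,T) : \min_{\overline{I_\beta}} J(t,\cdot) < 0\}$; by continuity, at $t=t^*$ there is a point $x^*\in\overline{I_\beta}$ with $J(t^*,x^*)=0$, and the strict inequality $u_t>\mu\, e^{t\Delta_\Omega}\chi_{I_0}$ excludes $x^*\in\partial I_\beta$. Hence $x^*$ is interior, so $J_t(t^*,x^*)\leq 0$, $J_{xx}(t^*,x^*)\geq 0$, and the parabolic identity forces $\Theta(t^*,x^*)\leq 0$. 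If $|x^*-x_0|>1$, or if $|x^*-x_0|<1$ with $u(t^*,x^*)\geq 1-\eta$, the relevant branch of the differential inequality on $a$ produces $\Theta(t^*,x^*)>0$, a contradiction. In the remaining case $|x^*-x_0|<1$ with $u(t^*,x^*)<1-\eta$, one has $h(u(t^*,x^*))\leq W(q,\eta,K)$, so $J(t^*,x^*)=0$ reads $u_t(t^*,x^*) \leq \eps\mu W\, a(|x^*-x_0|)$, which, combined with the strict version of the lower bound on $u_t$ and the definition \eqref{epsilon 1} of $\eps_2$, yields the required contradiction.

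Once $J\geq 0$ holds on $\Sigma$, we obtain $u_t \geq \eps\mu a(|x-x_0|)(1-u)^{-p}$, that is, $-\partial_t\bigl[(1-u)^{p+1}\bigr] \geq (p+1)\eps\mu\, a(|x-x_0|)$. Integrating from $t$ to $T$ and dropping the nonnegative term $(1-u(T,\cdot))^{p+1}$ gives \eqref{estTypeILemma}.
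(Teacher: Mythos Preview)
Your approach is essentially that of the paper: define $J$, bound $u_t$ below by $\mu\,e^{t\Delta_\Omega}\chi_{I_0}$, reduce $\Theta\geq 0$ to the differential inequality \eqref{diffineqa1} on $a$, and integrate $J\geq 0$ in time. The treatment of $\eta=1$ and of the parabolic boundary is fine.

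The one substantive gap is in your first-touching-zero argument for $\eta<1$. In the cases $(t^*,x^*)\in\Sigma_1$ or $(t^*,x^*)\in\Sigma_2^\eta$ you claim that the differential inequality on $a$ produces $\Theta(t^*,x^*)>0$ strictly; but \eqref{diffineqa1} is stated with $\geq$, and equality is entirely possible (indeed, the optimal $a$ constructed later satisfies the corresponding ODE exactly). With $\Theta(t^*,x^*)=0$ you only get $J_t=J_{xx}=0$ at the touching point, which does not prevent $J$ from becoming negative immediately afterwards. So no contradiction is obtained.

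The paper sidesteps this by noting---exactly as in your case~3---that $\eps\leq\eps_2$ together with $u_t\geq\mu\,e^{t\Delta_\Omega}\chi_{I_0}$ gives $J\geq 0$ \emph{throughout} $\Sigma_3^\eta$, not merely at a hypothetical touching point. Hence $\{J<0\}\subset\Sigma_1\cup\Sigma_2^\eta$, where the parabolic inequality $J_t-J_{xx}-pf(1-u)^{-p-1}J\geq 0$ holds a.e. A Stampacchia-type weak maximum principle (testing against $J_-$; see \cite[Proposition~52.8 and Remark~52.11(a)]{QS}) then yields $J\geq 0$, and this version needs only the non-strict inequality on $\{J<0\}$ plus $J\geq 0$ on the parabolic boundary. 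Your argument becomes correct if you replace the first-touching framework by this direct verification on $\Sigma_3^\eta$ followed by the weak maximum principle on the complement.
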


 By Lemma~\ref{estimates for T}(i) we have $\|u(t_0)\|_\infty \leq 1-\tau$, so we can estimate
\begin{equation}\label{estimate h t0}
h(u(t_0,x)) \leq W(q,\tau,K).
\end{equation}
We use this estimate in Theorems~\ref{global result dim 1}, \ref{global result dim 1a}, \ref{global result dim 1b} for simplicity. However, in Theorems~\ref{global result dim 1c}-\ref{global result dim 1c2}
 we will use a better upper estimate of $u$ by taking advantage of the smallness hypothesis in \eqref{condition 1 on f thm21}.

By the monotonicity properties of the function $h(u)$,  we note that if $p\geq qK$, then
\begin{equation}\label{Wcalcul}
W(q,\omega,K) =  h(1-\omega)=\omega^{-p}+K \omega^q,\quad \omega\in (0,1).
\end{equation}
This will be the case in the proofs of Theorems~\ref{global result dim 1}, \ref{global result dim 1a} and \ref{global result dim 1b} 
since we are considering $q=0$. 
For Theorems~\ref{global result dim 1c}-\ref{global result dim 1c2} we will also restrict ourselves to this case for simplicity and 
since we have observed numerically that the optimal choice of $K$ is less than $p$.

\begin{proof} Set $\Sigma:= [t_0, T) \times I_\beta$ and let 
$$J(t,x) = u_t -\eps \mu b(x)h(u)\quad\hbox{ in $\Sigma$,}$$
where 
$$h(u)=(1-u)^{-p} + K(1-u)^{q},\qquad b(x)=a(x-x_0).$$
We split the cylinder $\Sigma$ into three subregions as follows:
\begin{equation}\label{DefSubsetsSigma}
\begin{array}{lll}
\Sigma_1    &=& [t_0, T)\times (I_\beta\setminus \bar I_0), \\  
\noalign{\vskip 1mm}
\Sigma_2^\eta &=& \lbrace (t,x)\in [t_0,T) \times  \bar I_0; \ u(t,x)\geq 1-\eta \rbrace , \\
\noalign{\vskip 1mm}
\Sigma_3^\eta &=& \lbrace (t,x)\in [t_0,T) \times  \bar I_0; \ u(t,x)< 1-\eta \rbrace.
\end{array}
\end{equation}

By Lemma~\ref{basic computation} (still valid for $a\in  W^{2,2}([0,1+\beta])$), we have
\begin{equation}\label{Sigma2eta1a0}
J_t -  J_{xx} - pf(x)(1-u)^{-p-1}J \ge \eps \tilde \Theta\quad\hbox{ a.e. in $(t_0,T)\times I_\beta$,} 
\end{equation}
where
\begin{equation}\label{Sigma2eta1a01}
\tilde \Theta=(p+q)Kf(x) b(x) (1-u)^{-p+q-1} +   b''(x)h - \dfrac{|b'(x)|^2h'^2}{ b(x)h''}.
\end{equation}
Since our only lower assumption on $f$ is $f\ge \mu\chi_{ I_0}$ 
the property $\tilde\Theta\geq 0$ a.e. in $\Sigma_1$ (resp. $\Sigma_2^\eta$) amounts to requiring
\begin{equation*}
\dfrac{(p+q)K\mu\chi_{(-1,1)}( r)}{(1-u)^{p+1-q} h(u)} +\dfrac{a''(r)}{a(r)}  
\ge \left[\dfrac{a'(r)}{a(r)}\right]^2 \dfrac{{h'}^2(u)}{hh''(u)}
\end{equation*}
for a.e. $( r,u)\in (1,1+\beta)\times (0,1)$ (resp., $[0,1)\times [1-\eta,1)$).
Since we assumed that $a$ solves \eqref{diffineqa1}-\eqref{diffineqa2}, this precisely guarantees that
\begin{equation}\label{Sigma2eta1a}
J_t - J_{xx} - p f(x) (1-u)^{-p-1} J \geq  0 \quad \hbox{ a.e. in $\Sigma_1\cup\Sigma_2^\eta$.}
\end{equation}

On the other hand, we claim that
\begin{equation}\label{low bound u_t}
u_t\geq \mu e^{t\Delta_\Omega} \chi_{I_0} \qquad \text{in } [0,T)\times \Omega.
\end{equation}
The claim follows from the comparison principle applied to the function $v=u_t$, which solves the problem
\begin{equation}\label{eqnut}
\left\lbrace\begin{array}{ll}
v_t -  v_{xx} = pf(x) (1-u)^{-p-1}v, & \mbox{ in } (0,T)\times \Omega, \\
v=0, & \mbox{ in } [0,T)\times \partial\Omega, \\
v(0,x)=f(x), &\mbox{ in } \Omega
\end{array}\right.
\end{equation}
(see Lemma~3.4 in~\cite{ES2}). 
In view of \eqref{epsilon 1b}, \eqref{low bound u_t}, we have
\begin{equation}\label{J for t_0}
J(t_0,x) \geq \mu e^{t_0\Delta_\Omega}\chi_{I_0}(x) - \eps \mu h(u(t_0,x))a(x-x_0)\geq 0 \quad \mbox{ in } I_\beta.
\end{equation}

Next observe that if $\eta=1$, then the subregion $\Sigma_3^\eta$ is empty.
If $\eta\in (0,1)$, using assumption~(\ref{epsilon 1}) and $T\le \overline T$  by Lemma~\ref{estimates for T}(ii),
we have
\begin{equation}\label{J in sigma 3}
J(t,x) = u_t -\eps \mu b(x) h(u) 
      \geq  \mu e^{t\Delta_\Omega}\chi_{I_0}(x) - \eps \mu a(x-x_0)W(q,\eta,K)\geq 0 \quad\mbox{ in } \Sigma_3^\eta.
\end{equation}
Now, as a 
consequence of (\ref{J in sigma 3}) and $\Sigma = \Sigma_1\cup \Sigma_2^\eta \cup \Sigma_3^\eta$, we have
\begin{equation}\label{J<0 set}
\lbrace (t,x)\in \Sigma ; \quad J(t,x)<0\rbrace \subset \Sigma_1\cup \Sigma_2^\eta.
\end{equation}
Also, since $b=0$ on $\partial I_\beta$, we have
\begin{equation}\label{J on the boundary}
J\geq 0 \quad \mbox{on } [t_0,T)\times \partial I_\beta.
\end{equation}
 On the other hand, using standard parabolic regularity, we observe that 
$J\in C^1\big([t_0,T)\times \overline{I_\beta}\big)$, with $J_{xx}\in L^2(\Sigma)$.
It then follows from (\ref{Sigma2eta1a}), (\ref{J for t_0}), (\ref{J<0 set}), (\ref{J on the boundary}) and the maximum principle 
(see, e.g., \cite[Proposition 52.8 and Remark 52.11(a)]{QS}) that
$$J\geq 0 \quad \mbox{ in } \Sigma.$$
Integrating in time 
we obtain
$$(1-u(t,x))^{p+1} \geq (p+1) \eps \mu b(x) (T-t) \quad \mbox{ in } \Sigma,$$
which concludes the proof of the lemma.
\end{proof}

Our next task is to identify appropriate solutions of the differential inequality  \eqref{diffineqa1}-\eqref{diffineqa2}.
First of all, without loss of generality, we may assume the normalization condition
\begin{equation}\label{diffeqa4}
a(1)=1.
\end{equation}
Indeed, the inequality \eqref{diffineqa1}, the boundary conditions \eqref{diffineqa3} and 
(owing to assumptions~(\ref{epsilon 1b}), (\ref{epsilon 1})) the estimate~(\ref{estTypeILemma}) 
are not affected by multiplication of $a$ by a positive constant.

Now, it can be shown (see Proposition~\ref{optimal prolongation lemma}) that, 
among all possible solutions of \eqref{diffineqa1}, \eqref{diffineqa2}, \eqref{diffineqa3},
the optimal choice is to actually look for a solution of the corresponding ODE:
\begin{equation}\label{diffeqa1}
a''(r) = a(r)\,F\Bigl(r,\dfrac{a'(r)}{a(r)}\Bigr),\quad r_0\le r<1+\beta,
\end{equation}
for some $r_0\in [0,1)$, with boundary conditions
\begin{equation}\label{diffeqa2}
a'(r_0)=0,\quad a(1+\beta)=0,
\end{equation}
and to extend it to be constant on the remaining part of the interval:
\begin{equation}\label{diffeqa3}
a(r)=a(r_0),\quad 0\le r\le r_0.
\end{equation}
Indeed, Proposition~\ref{optimal prolongation lemma} shows that, fixing the reference value in \eqref{diffeqa4},
the ratios $\eps_1, \eps_2$ in (\ref{epsilon 1b}), (\ref{epsilon 1}) are largest when the function $a(r)$ is chosen in this way.
In order not to interrupt the main line of argument, Proposition~\ref{optimal prolongation lemma}
 and its proof are postponed to Appendix 2.

Next, it turns out that, for the special values $q=0$ and $q=1$, the solution of  \eqref{diffeqa4}-\eqref{diffeqa3} can be explicitly computed
whenever it exists. We start with the case $q=0$
(keeping $\eta\in (0,1]$ for future use in the proof of Theorem~\ref{global result dim 1b}.)
As for the more complicated case $q=1$, it will be studied in Section~\ref{construction a q=1} for the proof of  Theorems~\ref{global result dim 1c}-\ref{global result dim 1c2}.

\begin{lemma}\label{Explicita0}
Let $\beta>0$, $q=0$, $\eta\in (0,1]$, $K>0$ and assume
\begin{equation}\label{assumption a q=0}  
K\ge \frac{p\eta}{\mu\beta^2}-\frac{1}{(p+1)\eta^p}
\end{equation}
Let $h, F$ be given by \eqref{defhKq}, \eqref{diffineqa2} and set
$$m=\frac{p}{(p+1)(1+K\eta^p)},\quad M=\frac{pK\mu}{\eta (1+K\eta^p)},\quad
\delta_0
=\frac{\arctan\left(\textstyle \frac{p+1}{\beta} \sqrt{\frac{1-m}{M}} \right)}{\sqrt{M(1-m)}}.$$
There exist $r_0\in [0,1)$ and a solution $a\in  W^{2,2}([0,1+\beta])$ of \eqref{diffeqa4}-\eqref{diffeqa3}, \eqref{amonotone} 
if and only if $\delta_0\le 1$. The couple $(r_0,a)$ is then unique and it is given by
\begin{equation}\label{function a q=0}  
a_{\beta,K,\eta}(r)= \left\lbrace \begin{array}{ll}
D, &  r \in [0, r_0), \\
\noalign{\vskip 1mm}
D\cos^{\frac{1}{1-m}} \bigl(\sqrt{M(1-m)} (r-r_0)\bigr), &  r\in [r_0,1], \\
\noalign{\vskip 1mm}
\left(\dfrac{1+\beta -r}{\beta}\right)^{p+1}, & r\in (1,1+\beta],
\end{array}\right.
\end{equation}
where $r_0 = 1-\delta_0$ and $D=\Bigl[1+\frac{1-m}{M}\left(\frac{p+1}{\beta}\right)^2\Bigr]^{\frac{1}{2(1-m)}}$.
\end{lemma}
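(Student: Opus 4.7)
The plan is to construct $a$ piecewise on the three intervals $[0,r_0]$, $[r_0,1]$, $(1,1+\beta]$, solving the ODE $a''=aF(r,a'/a)$ with equality on the latter two (where one expects the extremal profile, cf.\ Proposition~\ref{optimal prolongation lemma}) and extending by a constant on $[0,r_0)$. The three main phases are then a direct integration piece by piece, a matching argument at $r=1$ that determines $\delta_0$ and $D$, and finally a verification that the constructed $a$ satisfies the original differential inequality when $F$ is taken as the supremum over~$u$.

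For $r\in(1,1+\beta)$, with $h(u)=(1-u)^{-p}+K$ one computes $h'^2/(hh'')=\frac{p}{(p+1)(1+K(1-u)^p)}$, whose supremum over $u\in(0,1)$ equals $\frac{p}{p+1}$ (attained as $u\to 1$). Solving $a''=\frac{p}{p+1}(a')^2/a$ via $w=a'/a$ gives the Bernoulli equation $w'=-w^2/(p+1)$; imposing $a(1+\beta)=0$ and the normalization $a(1)=1$ then yields $a(r)=\bigl((1+\beta-r)/\beta\bigr)^{p+1}$, with $a'(1^+)/a(1)=-(p+1)/\beta$. On $(r_0,1)$, working at the value $u=1-\eta$ gives $\phi:=h'^2/(hh'')=m$ and $\psi:=pK\mu/((1-u)^{p+1}h)=M$; substituting the ansatz $a(r)=D\cos^\alpha(\omega(r-r_0))$ into $a''=a(m(a'/a)^2-M)$ and matching the coefficients of $\tan^2$ and of the constant forces $\alpha=1/(1-m)$ and $\omega^2=M(1-m)$, and produces $a'(r_0)=0$ automatically.

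Matching continuity of $a$ and $a'$ at $r=1$ gives, with $\theta_0:=\omega(1-r_0)$, the two equations $D\cos^\alpha\theta_0=1$ and $\alpha\omega\tan\theta_0=(p+1)/\beta$. Since $\alpha\omega=\sqrt{M/(1-m)}$, the second rewrites as $\theta_0=\arctan\bigl(\tfrac{p+1}{\beta}\sqrt{(1-m)/M}\bigr)$, so that $1-r_0=\theta_0/\omega=\delta_0$, while the first yields $D=(1+\tan^2\theta_0)^{\alpha/2}$, which after substitution matches the stated expression for $D$. The constraint $r_0\in[0,1)$ becomes exactly $\delta_0\le 1$, and uniqueness of $(r_0,a)$ follows because each piece and each matching step is forced. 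Extending $a\equiv D$ on $[0,r_0)$ is then compatible with $a(r_0)=D$ and $a'(r_0)=0$, gives $a\in W^{2,\infty}\subset W^{2,2}$, and preserves $a>0$ and $a'\le 0$ throughout $[0,1+\beta)$.

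The main obstacle, where the hypothesis \eqref{assumption a q=0} enters, is verifying that $a''\ge aF(r,a'/a)$ really holds once $F$ is taken as the supremum over~$u$. Equality already holds by construction on $(1,1+\beta)$. On $[0,r_0)$ one has $a''=0$ and $a'/a=0$, while $F(r,0)\le 0$, so $aF(r,0)\le 0=a''$. The delicate case is $(r_0,1)$, where one must check that the supremum in $F$ is attained precisely at $u=1-\eta$. Substituting $v=(1-u)^{-p}$ into $G(u)=\phi(u)\xi^2-\psi(u)$ and differentiating, a short calculation gives $(v+K)^2 G'(v)=K\bigl[\tfrac{p\xi^2}{p+1}-\mu v^{1/p}(v+(p+1)K)\bigr]$, so $G$ is nonincreasing on $[\eta^{-p},\infty)$ iff $\xi^2\le\frac{(p+1)\mu}{p}v^{1/p}(v+(p+1)K)$ at $v=\eta^{-p}$ (the RHS being increasing in $v$). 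Since $|\xi|=\alpha\omega|\tan(\omega(r-r_0))|$ is increasing on $(r_0,1)$ and reaches the value $(p+1)/\beta$ at $r=1$, this reduces to $(p+1)/\beta^2\le\frac{\mu}{p\eta^{p+1}}(1+(p+1)K\eta^p)$, which is precisely \eqref{assumption a q=0}. This closes the verification and, together with the uniqueness, proves both directions of the if and only if.
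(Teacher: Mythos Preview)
Your proof is correct and follows essentially the same approach as the paper: piecewise solution of the ODE on $(1,1+\beta]$ and $[r_0,1]$, $C^1$ matching at $r=1$ to determine $\delta_0$ and $D$, constant extension on $[0,r_0)$, and verification via \eqref{assumption a q=0} that the supremum defining $F$ on $(r_0,1)$ is realized at $u=1-\eta$ (so that the constructed $a$ solves \eqref{diffeqa1} with equality, not merely $a''\ge aF$). The only cosmetic differences are that the paper linearizes each piece via the substitution $\phi=a^{1-m}$ (resp.\ $\phi=a^{1/(p+1)}$) rather than using an ansatz or a Bernoulli reduction, and it makes the uniqueness argument explicit by first bounding $|a'/a|\le(p+1)/\beta$ for \emph{any} solution on $[r_0,1]$ (from $(a'/a)'=F(r,a'/a)-(a'/a)^2\le 0$), whereas your phrase ``each step is forced'' implicitly relies on local ODE uniqueness for the full nonlinear equation.
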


\begin{figure}
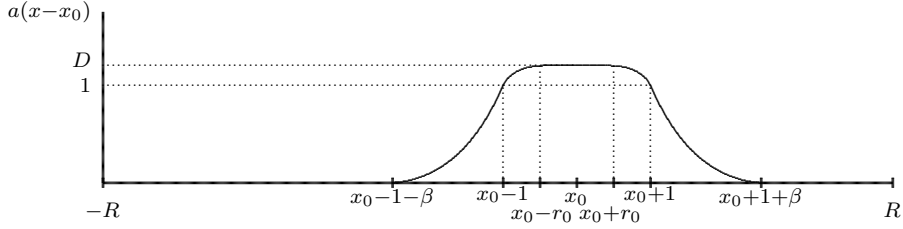

\[
\beginpicture
\setcoordinatesystem units <0.7cm,0.65cm>
\setplotarea x from -10 to 6, y from -1 to 4

\setdots <0pt>
\linethickness=1pt
\putrule from -9 0 to 6 0
\putrule from -9 0 to -9 3.5

\linethickness=1pt
\putrule from -9 -0.1 to -9 0.1
\putrule from 6 -0.1 to 6 0.1
\putrule from -1.4 -0.1 to -1.4 0.1
\putrule from 1.4 -0.1 to 1.4 0.1
\putrule from -.7 -0.1 to -.7 0.1
\putrule from .7 -0.1 to .7 0.1
\putrule from -3.5 -0.1 to -3.5 0.1
\putrule from 3.5 -0.1 to 3.5 0.1

\putrule from 0 -.1 to 0 .1
\put {$_{R}$} [ct] at 6 -.4
\put {$_{-R}$} [ct] at -9 -.4
\put {$_{x_0}$} [ct] at 0 -.2
\put {$_{x_0-1}$} [cb] at -1.4 -.4
\put {$_{x_0+1}$} [cb] at 1.4 -.4
\put {$_{x_0-1-\beta}$} [cb] at -3.5 -.5
\put {$_{x_0+1+\beta}$} [cb] at 3.5 -.5
\put {$_{x_0-r_0}$} [cb] at -.65 -.8
\put {$_{x_0+r_0}$} [cb] at .65 -.8
\put {$_{a(x-x_0)}$} [rc] at -9.2 3.5
\put {$_{1}$} [rc] at -9.2 2
\put {$_{D}$} [rb] at -9.2 2.38

\setquadratic
\plot 1.4 2  2.3 0.55  3.5 0 /
\plot 0.6 2.4  1.1 2.3  1.4 2 /

\plot -1.4 2  -2.3 0.55  -3.5 0 /
\plot -0.6 2.4  -1.1 2.3  -1.4 2 /

\setlinear
\plot -0.6 2.4  0.6 2.4 / 

\setdots <2pt>
\setlinear 
\plot -1.4 0  -1.4 2 /
\plot 1.4 0  1.4 2 /
\plot -.7 0  -.7 2.4 /
\plot .7 0  .7 2.4 /
\plot -9 2  1.4 2 /
\plot -9 2.4  0 2.4 /

\endpicture
\]
\caption{The function $a_{\beta,K,\eta}(x-x_0)$.} 
\label{a(x) figure}
\end{figure}


\begin{proof} 
\textit{Step 1: Determination of $a$ in $[1,1+\beta]$.} 
Since
$$hh'' =p(p+1)(1-u)^{-2p-2} \bigl[1 + K(1-u)^{p}\bigr], \quad h'^2=p^2(1-u)^{-2p-2},$$
we have
\begin{equation}\label{condaexterior}
\displaystyle\sup_{u\in (0,1)}\left[\dfrac{h'^2(u)}{hh''(u)}\right]=\dfrac{p}{p+1}.
\end{equation}
We are thus left with the ODE
$$aa''=\dfrac{p}{p+1}{a'}^2,$$
with boundary conditions $a(1)=1$, $a(1+\beta)=0$.
It is easy to solve this by setting $\phi=a^{1-\sigma}$, $\sigma=p/(p+1)\in (0,1)$, hence $\phi'=(1-\sigma)a^{-\sigma}a'$ and
$$\phi''=(1-\sigma)a^{-\sigma-1}[aa''-\sigma(a')^2]=0.$$
This leads to $\phi(r)=(1+\beta-r)/\beta$, hence $a(r)=\phi^{p+1}(r)$ given by the last part of~(\ref{function a q=0}).

\textit{Step 2: Determination of $a$ in $[0,1]$ and existence condition.} 
For $r\in [0,1]$, we compute
\begin{eqnarray*}
F(r,\xi)
&=&\displaystyle\sup_{u\in (1-\eta,1)}\left[\dfrac{h'^2(u)}{hh''(u)}\,\xi^2 - \dfrac{pK\mu}{(1-u)^{p+1}h(u)}\right] 
=\displaystyle\sup_{y\in (0,\eta)}\left[\dfrac{p\xi^2 }{(p+1)\bigl(1 + Ky^p\bigr)}- \dfrac{pK\mu}{y\bigl(1+ Ky^p\bigr)}\right] \\
&=& \displaystyle\sup_{y\in (0,\eta)} Z(\xi,y),\quad\hbox{ where }Z(\xi,y)=\Bigl(\dfrac{\xi^2 }{p+1}- \dfrac{K\mu}{y}\Bigr)\dfrac{p}{1+ Ky^p}.
\end{eqnarray*}
Computing
$$\frac{\partial Z}{\partial y}
=\dfrac{K\mu}{y^2}\dfrac{p}{1+ Ky^p}
-\Bigl(\dfrac{\xi^2 }{p+1}- \dfrac{K\mu}{y}\Bigr)\dfrac{p^2Ky^{p-1}}{(1+ Ky^p)^2},$$
we see that 
$$\frac{\partial Z}{\partial y}\ge 0 
\Leftrightarrow\frac{ \mu(1+ Ky^p)}{y}\ge\Bigl(\dfrac{\xi^2}{p+1}-\frac{K\mu}{y}\Bigr)py^{p}
\Leftrightarrow \xi^2\le \dfrac{(p+1)\mu}{py}\Bigl[\frac{1}{y^{p}}+K(p+1)\Bigr].$$
Consequently,  for $r\in [0,1]$, we have
\begin{equation}\label{Frxismall}
F(r,\xi)
=\Bigl(\dfrac{\xi^2 }{p+1}- \dfrac{K\mu}{\eta}\Bigr)\dfrac{p}{1+ K\eta^p},
\quad\hbox{ for }  \xi^2\le \dfrac{(p+1)\mu}{p\eta}\Bigl[\frac{1}{\eta^{p}}+K(p+1) \Bigr].
\end{equation}
Furthermore, owing to \eqref{diffineqa2} and \eqref{condaexterior}, any solution of 
\eqref{diffeqa1} on some interval $[r_0,1]$ with $a>0$ and $a'\le 0$ must satisfy
$$\Bigl(\frac{a'}{a}\Bigr)'=\frac{a''}{a}-\Bigl(\frac{a'}{a}\Bigr)^2 \le 0$$
hence, by the $C^1$ continuity conditions 
\begin{equation}\label{C1contcond}
a(1_-)=1,\qquad a'(1_-)=a'(1_+)=-\frac{p+1}{\beta},
\end{equation}
we must have
\begin{equation}\label{boundaprimea}
\Bigl|\frac{a'(r)}{a(r)}\Bigr|\le \Bigl|\frac{a'(1)}{a(1)}\Bigr|=\frac{p+1}{\beta},\quad r_0\le r\le 1.
\end{equation}
By \eqref{Frxismall}, \eqref{boundaprimea} and assumption \eqref{assumption a q=0}, we are thus left with the ODE
\begin{equation}\label{resolphi00}
aa''=m{a'}^2-Ma^2,\quad r\le 1,
\end{equation}
with 
$$m=\frac{p}{(p+1)(1+K\eta^p)} \in (0,1),\quad M=\frac{pK\mu}{\eta (1+K\eta^p)}.$$
Setting 
$\phi=a^{1-m}$, 
\eqref{resolphi00} is equivalent to
\begin{equation}\label{resolphi0}
\phi''=(1-m)a^{-m-1}[aa''-m(a')^2]=-(1-m)M\phi.
\end{equation}
Since all solutions of \eqref{resolphi0} are given by cosine functions, and since we are looking for a solution $\phi$ such that $\phi'(1)<0$,
$\phi'$ must have a first zero on the left of $r=1$. 
Since we also impose $\phi>0$, $\phi'\le 0$ on $[r_0,1]$  and $\phi'(r_0)=0$, this first zero must coincide with $r_0$ and
the solution must be of the form
\begin{equation}\label{resolphi0b}
\phi(r)=D_0\cos\bigl[\sqrt{(1-m)M}(r-r_0)\bigr],
\end{equation}
for some $D_0>0$, and we must have
\begin{equation}\label{resolphi1}
\sqrt{(1-m)M}(1-r_0)<\pi/2.
\end{equation}
This, along with \eqref{C1contcond}, yields
\begin{equation}\label{resolphi2}
1=\phi(1)=D_0\cos\bigl[\sqrt{(1-m)M}(1-r_0)\bigr]
\end{equation}
and
$$
-(1-m)(p+1)\beta^{-1}=\phi'(1)=-D_0\sqrt{(1-m)M}\sin\bigl[\sqrt{(1-m)M}(1-r_0)\bigr],
$$
hence
\begin{equation}\label{resolphi3}
\tan\bigl[\sqrt{(1-m)M}(1-r_0)\bigr]=\frac{p+1}{\beta}\sqrt{\frac{1-m}{M}}.
\end{equation}
It follows that
\begin{equation}\label{resolphi3b}
 1-r_0=\delta_0:=\frac{1}{\sqrt{M(1-m)}}\arctan\left( \frac{p+1}{\beta} \sqrt{\frac{1-m}{M}} \right)
\end{equation}
(which in particular guarantees \eqref{resolphi1}) and, by \eqref{resolphi2}, \eqref{resolphi3}, we have
$$
D_0=\dfrac{1}{\cos\bigl[\sqrt{(1-m)M}(1-r_0)\bigr]}=\sqrt{1+\tan^2\bigl[\sqrt{(1-m)M}(1-r_0)\bigr]},$$
hence
\begin{equation}\label{resolphi4}
D_0=\sqrt{1+\frac{1-m}{M}\left(\frac{p+1}{\beta}\right)^2}.
\end{equation}

 From the above, we see that the existence of a solution  of \eqref{diffeqa4}-\eqref{diffeqa3}, \eqref{amonotone} 
for some $r_0\in [0,1)$ is equivalent to $\delta_0\le 1$.
Under this condition, we deduce from \eqref{resolphi0b} and \eqref{resolphi4} that 
$a(r)$ is given by (\ref{function a q=0}). .
 Moreover, in view of \eqref{resolphi3b}, the couple $(r_0,a)$ is then unique.
The lemma is proved.
\end{proof}

\subsection{Proof of Proposition~\ref{type 1 quantitative}}

To complete the proof of Proposition~\ref{type 1 quantitative}, it essentially remains to express the infima in 
(\ref{epsilon 1b}) and (\ref{epsilon 1}) in terms of the error function.

This relies on quantitative 
comparison estimates for the Dirichlet heat semigroups,
given in Proposition~\ref{prop semigroup comparison}, combined with the following elementary lemma.

\begin{lemma}\label{lemTransfErr}
Denote by $e^{t\Delta_\mathbb{R}}$ the heat semigroup on $\mathbb{R}$.
Let the functions $G, H, \delta$ be defined by (\ref{H and G}).
Let $\beta, K>0$ satisfy $K\ge \frac{p}{\mu\beta^2}-\frac{1}{p+1}$ and $\delta(\beta,K)\le 1$, and 
let $a(x)$ be defined by (\ref{function a q=0}) with $\eta=1$.
We have
\begin{equation}\label{quotient inside B q=0}
\displaystyle\inf_{0<r<1} \dfrac{e^{t\Delta_\mathbb{R}}\chi_{(-1,1)}(r)}{a(r)}
=\dfrac{1}{2} G(t,\beta,K)
\end{equation}
and
\begin{equation}\label{quotient outside B} 
\displaystyle\inf_{1<r<1+\beta} \dfrac{e^{t\Delta_\mathbb{R}}\chi_{(-1,1)}(r)}{a(r)} 
=\dfrac{1}{2} H(t,\beta),\quad t>0.
\end{equation}
Moreover, 
\begin{equation}\label{Gmonot}
t\mapsto G(t,\beta,K)  \ \hbox{ is nonincreasing for $t>0$.}
\end{equation}
\end{lemma}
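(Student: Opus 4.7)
My plan is to reduce the three claims to direct computations using the Gaussian heat kernel together with the piecewise explicit formula \eqref{function a q=0} for $a$. The starting point is the identity
$$e^{t\Delta_\R}\chi_{(-1,1)}(r)=\frac{1}{\sqrt{4\pi t}}\int_{-1}^{1}e^{-(r-y)^2/(4t)}\,dy=\frac12\Bigl[\text{\rm erf}\bigl(\tfrac{1-r}{2\sqrt{t}}\bigr)+\text{\rm erf}\bigl(\tfrac{1+r}{2\sqrt{t}}\bigr)\Bigr],$$
which in particular shows that the numerators in the two quotients are smooth, even in $r$, and strictly decreasing on $[0,\infty)$ (being the heat evolution of an even initial datum).

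For \eqref{quotient inside B q=0}, I would first dispose of the flat region $r\in(0,r_0)$ on which $a\equiv D$: the ratio is strictly decreasing there and, by continuity, its infimum coincides with its value at $r_0$ and is therefore absorbed into the infimum over $[r_0,1]$. On $[r_0,1]$ I would then use the change of variable $r=1-(1-x)\delta$, $x\in[0,1]$, with $\delta=1-r_0$. The essential task is to verify the algebraic identifications linking Lemma~\ref{Explicita0} (specialized to $\eta=1$) to \eqref{H and G}, namely
$\delta=A(1+K)\sqrt{(p+1)/(pLK\mu)}=\arctan(\Gamma)/\sqrt{M(1-m)}$, $\alpha=1/(1-m)$, $D=(1+\Gamma^2)^{\alpha/2}$, and $\sqrt{M(1-m)}\,(r-r_0)=Ax$; each follows by elementary manipulation from the definitions of $m,M,L,\Gamma,A,\alpha$. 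Substitution then gives
$$\frac{e^{t\Delta_\R}\chi_{(-1,1)}(r)}{a(r)}=\frac12(1+\Gamma^2)^{-\alpha/2}\,\frac{\text{\rm erf}\bigl(\tfrac{2-(1-x)\delta}{2\sqrt{t}}\bigr)+\text{\rm erf}\bigl(\tfrac{(1-x)\delta}{2\sqrt{t}}\bigr)}{\cos^\alpha(Ax)},$$
and taking the infimum over $x\in(0,1)$ produces \eqref{quotient inside B q=0}.

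For \eqref{quotient outside B} the computation is lighter: setting $r=1+\beta x$ with $x\in[0,1)$ gives $(1+\beta-r)/\beta=1-x$, so $a(r)=(1-x)^{p+1}$, while the heat-kernel value becomes $\tfrac12[\text{\rm erf}((2+\beta x)/(2\sqrt{t}))-\text{\rm erf}(\beta x/(2\sqrt{t}))]$ by oddness of erf, which matches the numerator of $H$ after rewriting $(2+\beta x)/(2\sqrt{t})=(1+\tfrac{\beta x}{2})/\sqrt{t}$. Finally, for \eqref{Gmonot}, the standing assumption $\delta\leq 1$ ensures that for any fixed $x\in(0,1)$ both arguments $(1-x)\delta$ and $2-(1-x)\delta$ of the erf terms are strictly positive; each map $t\mapsto\text{\rm erf}(c/(2\sqrt{t}))$ with $c>0$ is therefore strictly decreasing in $t$, the denominator $\cos^\alpha(Ax)$ is $t$-independent, and this monotonicity passes through the infimum over $x$. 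The only (mild) obstacle in the whole argument is the bookkeeping needed to match the parameters of Lemma~\ref{Explicita0} with those of \eqref{H and G}; once this is done, every remaining step is a one-line substitution.
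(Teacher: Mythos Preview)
Your proposal is correct and follows essentially the same route as the paper: both proofs reduce the two infima to the erf formula for the Gaussian kernel, dispose of the flat region $[0,r_0)$ by monotonicity of the numerator, and then perform the same linear reparametrizations ($x=(r-r_0)/(1-r_0)$ on $[r_0,1]$ and $x=(r-1)/\beta$ on $[1,1+\beta]$), with the algebraic identifications you list matching exactly those implicit in the paper. The only cosmetic difference is in \eqref{Gmonot}: the paper argues monotonicity by going back to the integral representation of $e^{t\Delta_\R}\chi_{(-1,1)}(r)$ for $r\in[-1,1]$, whereas you work directly with the two erf terms using $\delta\le 1$; both are one-line observations.
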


\begin{proof}
Denote by $L_1$ the LHS of (\ref{quotient inside B q=0}). 
We note that $a(r)$ is constant in the interval $\left[0, r_0\right)$.
Since $e^{t\Delta_\mathbb{R}}\chi_{(-1,1)}(r)$ is even and monotonically decreasing for $r>0$, we thus need only consider the interval  $\left[r_0,1\right)$.
Using the definition of $a(r)$ in this interval and the heat kernel on the real line, we obtain
$$L_1 = \dfrac{1}{\sqrt{4\pi t}} \inf_{r_0<r<1}\, \dfrac{1}{a(r)}\displaystyle\int_{-1}^1  e^{-\frac{(r-y)^2}{4t}}dy.$$
After the change of variable  $Z = \frac{r-y}{2\sqrt{t}}$, and then $x=\frac{r-r_0}{1-r_0}$, we obtain
\begin{equation*}
L_1 = \dfrac{1}{\sqrt{\pi}} \inf_{r_0<r<1} 
\dfrac{1}{a(r)} \displaystyle\int_{\frac{r-1}{2\sqrt{t}}}^{\frac{r+1}{2\sqrt{t}}} 
e^{-Z^2}dZ 
= \dfrac{1}{2}\inf_{ 0<x<1} 
\dfrac{\mbox{erf} \left( \frac{1+r_0+(1-r_0) x}{2\sqrt{t}}\right) 
+ \mbox{erf} \left(\frac{1-r_0-(1-r_0) x}{2\sqrt{t}}\right)}{a((1-r_0) x+r_0)}.
\end{equation*}
 Comparing \eqref{function a q=0}, where $\eta=1$, with (\ref{H and G}), we deduce
$$L_1=\dfrac{1}{2}\inf_{0<x<1} 
\dfrac{\mbox{erf} \left( \frac{ 2-(1-x)\delta_0}{2\sqrt{t}}\right) 
+ \mbox{erf} \left(\frac{(1-x)\delta_0}{2\sqrt{t}}\right)}{ D \cos^\alpha (Ax)} =\frac12 G(t,\beta,K).$$ 

Now denote by $L_2$ the LHS of (\ref{quotient outside B}). 
Using the definition of $a(r)$ in this interval and the heat kernel on the real line, we find
$$ L_2 = \dfrac{1}{\sqrt{4\pi t}} \inf_{1<r<1+\beta} \left(\dfrac{\beta}{1+\beta-r}\right)^{p+1}\displaystyle\int_{-1}^1 e^{-\frac{(r-y)^2}{4t}}dy.$$
After the changes of variables  $X = \dfrac{r-1}{\beta}$, 
$Y = \dfrac{y-1}{\beta}$, and then $Z=\dfrac{\beta(Y-X)}{2\sqrt{t}}$, we obtain
\begin{eqnarray*}
L_2 & = & \dfrac{1}{\sqrt{4\pi t}} \inf_{0<X<1} \dfrac{\beta\displaystyle\int_{-\frac{2}{\beta}}^0 e^{-\frac{(X-Y)^2}{4t}\beta^2} dY}{(1-X)^{p+1}} 
=  \dfrac{1}{\sqrt{\pi }} \inf_{0<X<1} \dfrac{\displaystyle\int_{X\frac{\beta}{2\sqrt{t}}}^{\left(\frac{2}{\beta}+X\right)\frac{\beta}{2\sqrt{t}}} 
e^{-Z^2} dZ}{(1-X)^{p+1}}\\
& = & \dfrac{1}{2}\inf_{0< X <1} \dfrac{\mbox{erf} \left( (\frac{2}{\beta}+X)\frac{\beta}{2\sqrt{t}}\right) 
- \mbox{erf} \left(X\frac{\beta}{2\sqrt{t}}\right)}{(1-X)^{p+1}} =\frac12 H(t,\beta).
\end{eqnarray*}

Let us finally verify (\ref{Gmonot}). For any $t>0$ and $x\in [-1,1]$, using the change of variables $x-y=z\sqrt{4 t}$, we obtain
$$e^{t\Delta_\mathbb{R}}\chi_{(-1,1)}(x) 
= \dfrac{1}{\sqrt{4\pi t}} \displaystyle\int_{-1}^1 e^{-\frac{(x-y)^2}{4t}}dy
= \dfrac{1}{\sqrt{\pi}}\displaystyle\int_{\frac{x-1}{\sqrt{4t}}}^{\frac{x+1}{\sqrt{4t}}} e^{-z^2}dz.$$
Since $x-1\le 0\le x+1$, this quantity is nonincreasing as $t$ increases and 
property~(\ref{Gmonot}) then immediately follows from (\ref{quotient inside B q=0}).
\end{proof}

\begin{proof}[Proof of Proposition~\ref{type 1 quantitative}]
Let  $d\in (0,d_0)$, $\tau\in (0,1)$ and let $(\beta,K)\in\mathcal{A}_1$, $\eta=1$.
 Estimate~(\ref{def-t0Tbar}) follows from Lemma~\ref{estimates for T}(i).  
We may apply Lemma~\ref{control Theta}  for $q=0, \eta=1$ with the function $a(r)$ given 
by Lemma~\ref{Explicita0}; see Figure~\ref{fig3}.


\begin{figure}[h]
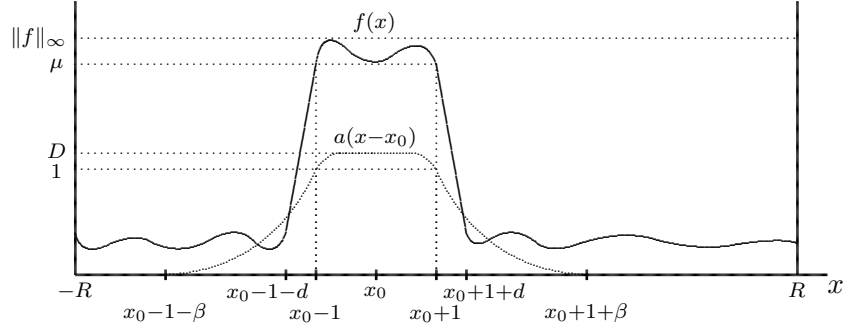

\[
\beginpicture
\setcoordinatesystem units <0.8cm,0.7cm>
\setplotarea x from -6 to 8, y from -1 to 5.5

\setdots <0pt>
\linethickness=1pt
\putrule from -5 0 to 7.5 0
\putrule from -5 0 to -5 5.2
\putrule from 7 0 to 7 5.2

\setquadratic
\plot -5 0.8  -4.8 0.5  -4.4 0.6 /
\plot -4.4 0.6  -4 0.7  -3.7 0.6 /
\plot -3.7 0.6  -3.2 0.5  -2.7 0.7 /
\plot -2.7 0.7  -2.3 0.8  -2 0.6 /
\plot -2 0.6  -1.7 0.5  -1.5  0.8 /

\plot -1 4  -0.8 4.45  -0.4 4.2 /
\plot -0.4 4.2  -0.05 4.05  0.25 4.15 /
\plot 0.25 4.15  0.72 4.35  1 4 /

\plot 1.5 0.8  1.65 0.6  1.9 0.68 /
\plot 1.9 0.68  2.3 0.8  2.7 0.58 /  
\plot 2.7 0.58 3 0.5  3.4 0.6 /  
\plot 3.4 0.6  4.1 0.75  4.7 0.65 /
\plot 4.7 0.65  5.4 0.53  6 0.58 /
\plot 6 0.58  6.63 0.65  7 0.6 /

\setlinear
\plot -1.5 0.8  -1 4 /
\plot 1 4  1.5 0.8 /

\setdots <2.5pt>
\setlinear 
\plot -1 0  -1 4 /
\plot 1 0  1 4 /
\plot -5 4  1 4 /
\plot -5 4.5  7 4.5 /

\put {$x$} [lt] at 7.5 -.1
\put {$_{R}$} [ct] at 7 -.15
\put {$_{-R}$}  [ct] at -5 -.15
\put {$_{f(x)}$} [rc] at 0.35 4.8
\put {$_{\|f\|_\infty}$}  [ct] at -5.6 4.7
\put {$_{\mu}$} [ct] at -5.3 4.1

\setdots <0pt>

\putrule from 0 -.1 to 0 .1
\putrule from 3.5 -.1 to 3.5 .1
\putrule from 1 -.1 to 1 .1
\putrule from 1.5 -.1 to 1.5 .1
\putrule from -3.5 -.1 to -3.5 .1
\putrule from -1 -.1 to -1 .1
\putrule from -1.5 -.1 to -1.5 .1

\put {$_{x_0}$} [ct] at 0 -.22
\put {$_{x_0-1}$} [cb] at -1 -.9
\put {$_{x_0+1}$} [cb] at 1 -.9
\put {$_{x_0-1-d}$} [ct] at -1.8 -.17
\put {$_{x_0+1+d}$} [ct] at 1.8 -.17
\put {$_{x_0-1-\beta}$} [cb] at -3.5 -.9
\put {$_{x_0+1+\beta}$} [cb] at 3.5 -.9
\put {$_{a(x-x_0)}$} [rc] at 0.7 2.6
\put {$_{1}$} [ct] at -5.3 2.1
\put {$_{D}$} [ct] at -5.3 2.5

\setdots <1.2pt>
\setquadratic
\plot 1 2  2 0.55  3.5 0 /
\plot 0.6 2.32  0.8 2.24  1 2 /

\plot -1 2  -2 0.55  -3.5 0 /
\plot -0.6 2.32  -0.8 2.24  -1 2 /

\setlinear
\plot -0.6 2.32  0.6 2.32 / 

\setdots <2.5pt>
\setlinear 
\plot -1 0  -1 2 /
\plot 1 0  1 2 /
\plot -5 2  1 2 /
\plot -5 2.32  0 2.32 /

\endpicture
\] 
\caption{ Location of the support of the cut-off function $a$ in the proof of Proposition~\ref{type 1 quantitative}
($d$~is fixed and $\beta$ is one of the optimization parameters).}
\label{fig3}
\end{figure}

It follows from (\ref{estTypeILemma}) and (\ref{function a q=0}) for $\eta=1$ that,
for all $t\in [t_0,T)$,
\begin{equation}\label{typeIprelim} 
\big[1 - u\big(t,x_0\pm(1+d)\big)\big]^{p+1} \ge (p+1)\left(\dfrac{\beta-d}{\beta}\right)^{p+1} \eps\, \mu (T-t),
\end{equation}
where $\eps=\eps_1$, given in (\ref{epsilon 1b}). 

On the other hand, assuming $x_0\ge 0$ without loss of generality
and recalling the definition of $S$ in (\ref{H and G}),
it follows from the comparison properties for the Dirichlet heat semigroup in 
Proposition~\ref{prop semigroup comparison} that, for all $ \lambda\in [1,R-x_0]$,
$$
\begin{array}{ll}
\left(e^{t\Delta_\Omega}\chi_{I_0}\right) (x) 
&\geq e^{-\frac{\pi^2}{ 4(R-x_0)^2}}\Bigl[1-e^{-(R-x_0-1)(R-x_0-\lambda)/t}\Bigr]\left(e^{t\Delta_\mathbb{R}}\chi_{I_0}\right) (x) \\
\noalign{\vskip 1mm}
&= S(t,\lambda-1)\left(e^{t\Delta_\mathbb{R}}\chi_{I_0}\right) (x)
\quad\hbox{ for all $x\in [x_0-\lambda,x_0+\lambda]$.}
\end{array}
$$
 This along with estimate \eqref{estimate h t0} guarantees that
\begin{equation}\label{compsemigroup}
\begin{array}{lll}
\displaystyle \inf_{|x-x_0|\le \lambda} \dfrac{[e^{t_0\Delta_{\Omega}}\chi_{I_0}](x)}{h(u(t_0,x))a( |x-x_0|)}
& \ge \dfrac{S(t_0,\lambda-1)}{W(0,\tau,K)} 
\ \ \displaystyle \inf_{|x-x_0|\le \lambda} \dfrac{[e^{t_0\Delta_\R}\chi_{I_0}](x)}{a( |x-x_0|)} \\
& = \dfrac{S(t_0,\lambda-1)}{W(0,\tau,K)}
  \ \ \displaystyle \inf_{|y|\le \lambda}\dfrac{[e^{t_0\Delta_\R}\chi_{(-1,1)}](y)}{a( |y|)}.
\end{array}
\end{equation}
Applying (\ref{compsemigroup}) with $\lambda=1+\beta$,  \eqref{Wcalcul}, (\ref{quotient inside B q=0})
and (\ref{quotient outside B}), it follows that
\begin{equation}\label{ineqeps1} 
\eps_1 \ge \frac12 \dfrac{S(t_0,\beta)}{ K+\tau^{-p}} \min \lbrace  
G(t_0,\beta,K), H(t_0,\beta)\rbrace.
\end{equation}
Combining (\ref{typeIprelim}) and (\ref{ineqeps1}),
the conclusion then follows by taking the supremum over 
$(\beta,K)\in\mathcal{A}_1$.
\end{proof}

\begin{remark}
(a) The choice $\eta=1$ allows one to get rid of condition $\eps\leq \eps_2$ in~(\ref{epsilon 1}), leading to 
an important 
simplification of the expression for $\hat{\eps}$ in Proposition~\ref{type 1 quantitative}.

(b) Concerning the definition of the subregions in \eqref{DefSubsetsSigma}, we observe that it would not be of any use 
to separate the cases $1-u\ge \eta$ and $1-u<\eta$ outside the interval $I_0=(x_0-1,x_0+1)$. 
Indeed, this would not lead to a better function $a(x)$ outside $I_0$,
since the supremum in \eqref{condaexterior} is achieved for $u\sim 1$.
\end{remark}

\section{Proof of Theorems~\ref{global result dim 1}, \ref{global result dim 1a}  and \ref{global result dim 1b}}

The proofs  will use the following no-touchdown criterion, which enables one to exclude touchdown 
on a given subinterval $D$ of $\Omega$, under a type I estimate on $\partial D$ and a suitable smallness assumption on $f$ in $D$.

\begin{lemma}\label{basic supersol}
Let $\Omega=(-R,R)$, $\tau\in (0,1)$ and $u$ be the solution of problem~(\ref{quenching problem}). 
Let either 
$$D= (x_0-b,x_0+b)\subset\subset\Omega \quad\hbox{ and }\quad \Gamma=\partial D,
\leqno(i)$$
or
$$D=(a,R)\ \hbox{ for some $a\in(-R,R)$} \quad\hbox{ and }\quad \Gamma=\{a\}.
\leqno(ii)$$
 Let $\tau\in (0,1)$, $t_0 = t_0(\tau) = \frac{1-\tau^{p+1}}{(p+1)\|f\|_\infty}$  and assume that
\begin{equation}\label{hyp basic supersol}
(1-u)^{p+1} \geq k(T-t) \quad\hbox{ on $[t_0,T)\times \Gamma$}
\end{equation}
for some $k>0$. If
\begin{equation}\label{hypf basic supersol}
\|f\|_{L^\infty(D)}<\frac{1}{p+1}\min \left\{ k, \frac{\tau^{p+1}}{T-t_0}\right\},
\end{equation}
then $ \mathcal{T} \cap  D=\emptyset$. 
 In addition, $R\notin \mathcal{T}$ in case (ii).
\end{lemma}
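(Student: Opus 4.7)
The plan is to construct a supersolution $V \geq u$ of \eqref{quenching problem} on $[t_0,T) \times \bar D$ that remains strictly below $1$ at $t=T$ everywhere on $\bar D \setminus \Gamma$ (in particular at $R$ in case (ii)). Since such a $V$ would force $u$ to stay bounded away from $1$ near any prospective touchdown point in $D$, this rules out touchdown. The strict inequality in \eqref{hypf basic supersol} provides the slack needed: I would fix $\alpha$ with
\begin{equation*}
(p+1)\|f\|_{L^\infty(D)} < \alpha < \min\bigl\{k,\ \textstyle\frac{\tau^{p+1}}{T-t_0}\bigr\},
\end{equation*}
and a smooth $\phi \geq 0$ on $\bar D$ that vanishes to order at least two on $\Gamma$ and is strictly positive on $\bar D \setminus \Gamma$ --- concretely $\phi(x) = (b^2-(x-x_0)^2)^2$ in case~(i) and $\phi(x) = (x-a)^2$ in case~(ii). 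The crucial property is that $(\phi')^2/\phi$ is bounded on $\bar D$ by some constant $C_\phi$. For $\epsilon>0$ to be fixed small, the ansatz is $V(t,x) = 1 - [P(t,x)]^{1/(p+1)}$ with $P(t,x) := \alpha(T-t) + \epsilon\phi(x)$.

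A direct computation gives
\begin{equation*}
V_t - V_{xx} - f(x)(1-V)^{-p}
= \frac{P^{-p/(p+1)}}{p+1}\Bigl[\alpha - (p+1)f(x) + \epsilon\phi''(x) - \frac{p\epsilon^2(\phi'(x))^2}{(p+1)P(t,x)}\Bigr].
\end{equation*}
Writing $\gamma := \alpha - (p+1)\|f\|_{L^\infty(D)} > 0$ and bounding the gradient term via $P \geq \epsilon\phi$ and $(\phi')^2/\phi \leq C_\phi$, the bracket is at least $\gamma + \epsilon\phi'' - p\epsilon C_\phi/(p+1) \geq \gamma/2$ once $\epsilon$ is chosen small (depending on $\gamma$, $\|\phi''\|_\infty$, and $C_\phi$). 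Hence $V$ is a supersolution in $D \times (t_0,T)$.

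Next I would check $V \geq u$ on the parabolic boundary. At $t=t_0$: Lemma~\ref{estimates for T}(i) gives $(1-u(t_0,\cdot))^{p+1}\geq \tau^{p+1}$, while $P(t_0,x) \leq \alpha(T-t_0) + \epsilon\|\phi\|_\infty < \tau^{p+1}$ for $\epsilon$ small. On $\Gamma\times[t_0,T)$: $\phi \equiv 0$, so $P = \alpha(T-t) \leq k(T-t) \leq (1-u)^{p+1}$ by \eqref{hyp basic supersol}. In case~(ii), at $x=R$ we have $u(t,R) = 0$ and $V(t,R) \geq 0$ because $P(t,R) < 1$ for small $\epsilon$. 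The standard parabolic comparison principle applied on $[t_0,T-\delta] \times \bar D$ (where $u,V$ are both bounded away from $1$, so the nonlinearity can be linearized with bounded coefficient), followed by $\delta \to 0$, yields $u \leq V$ throughout $[t_0,T) \times \bar D$. For any $x_0 \in D$ in case~(i), or $x_0 \in D \cup \{R\}$ in case~(ii), we have $\phi(x_0)>0$, hence $V(t,x) \to 1 - [\epsilon\phi(x_0)]^{1/(p+1)} < 1$ as $(t,x) \to (T,x_0)$, ruling out touchdown at $x_0$.

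The main technical obstacle is controlling the gradient term $-p\epsilon^2(\phi')^2/((p+1)P)$ uniformly up to $\Gamma$, where $P$ may tend to zero; this is precisely what forces the second-order vanishing of $\phi$ on $\Gamma$ (a first-order vanishing like $\phi(x) = b^2 - (x-x_0)^2$ would make $(\phi')^2/\phi$ blow up on $\Gamma$ and break the supersolution inequality). The remaining boundary comparison and passage to the limit are routine.
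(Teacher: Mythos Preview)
Your proof is correct and complete, but it takes a different ansatz from the paper's. The paper builds the supersolution multiplicatively as $w(t,x)=y(t)\psi(x)$ with $y(t)=1-A(T-t)^{1/(p+1)}$ and $\psi(x)=1-\sigma\bigl(1-\frac{(x-x_0)^2}{b^2}\bigr)$ (resp.\ $\psi(x)=1-\frac{\sigma}{R-a}(x-a)$), where $\sigma>0$ is small. Because the separation is multiplicative, the nonlinear term is controlled via the crude bound $(1-y\psi)^{-p}\le (1-y)^{-p}$, and no cross term of the form $(\phi')^2/P$ appears; consequently the spatial factor only needs to equal $1$ on $\Gamma$ with \emph{first}-order vanishing of $1-\psi$. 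Your additive ansatz $1-V=[\alpha(T-t)+\epsilon\phi]^{1/(p+1)}$ makes the nonlinear term exact, at the cost of generating the gradient term $-p\epsilon^2(\phi')^2/\bigl((p+1)P\bigr)$, which you handle cleanly by requiring $\phi$ to vanish to second order on $\Gamma$ so that $(\phi')^2/\phi$ stays bounded. Both routes exploit the same slack from the strict inequality in \eqref{hypf basic supersol} and the same control $(1-u(t_0,\cdot))^{p+1}\ge\tau^{p+1}$ from Lemma~\ref{estimates for T}(i); the paper's version is slightly shorter, while yours gives a sharper pointwise lower bound on $1-u$ near $t=T$.
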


The proof of this lemma is given in~\cite{ES2} 
for the case when the type I estimate \eqref{hyp basic supersol} is satisfied in the whole time interval $[0,T)$. 
Here, we make a slight modification of the proof in order to use Propositions~\ref{type 1 quantitative}, \ref{type 1 quantitative1}, \ref{type 1 quantitative1b}, where the quantitative type I estimate only holds in the interval $[t_0,T)$.

\begin{proof}
We define the comparison function
$$w(t,x) := y(t) \psi(x)  \quad \text{ for } (t,x)\in 
[t_0,T)\times \overline{D},$$
where $y(t)$ is defined by
$$
y(t) =1-A(T-t)^{\frac{1}{p+1}},
\qquad\hbox{ with }
A = \min \left\{ k^{\frac{1}{p+1}} , \frac{\tau-\sigma}{1-\sigma}(T-t_0)^{-\frac{1}{p+1}} \right\}
$$
for some $\sigma\in (0,\tau)$ to be chosen later, and $\psi (x)$ is given by 
$\psi(x) := 1- \sigma \left( 1-\dfrac{(x-x_0)^2}{b^2}\right)$ in case (i), and by  $\psi(x) := 1-\frac{\sigma}{R-a} (x-a)$ in case (ii).

Considering $\sigma$ small enough, we obtain, in $(t_0,T)\times D$
\begin{equation}\label{eqnwcomp}
w_t - w_{xx} -f(x)(1-w)^{-p} \ge \left( \frac{A}{p+1}(1-\sigma) - f(x)A^{-p}\right) T^{-\frac{p}{p+1}} -  \psi''(x) \ge 0,
\end{equation}
 noting that $\|f\|_{L^\infty(D)}< \frac{A^{p+1}}{p+1}$ by \eqref{hypf basic supersol} for $\sigma$ small enough.

We next look at the parabolic boundary of $[t_0,T)\times D$. On the one hand,  by Lemma~\ref{estimates for T}(i), we have
\begin{equation}\label{boundcondw1}
w(t_0,x) \geq (1-A(T-t_0)^{\frac{1}{p+1}}) (1-\sigma) \geq 1-\tau \geq u(t_0,x) \quad \text{in} \ \overline{D}.
\end{equation}
On the other hand, using $\psi =1$ on $\Gamma$ and $A\leq k^{\frac{1}{p+1}}$, we apply \eqref{hyp basic supersol} to obtain
\begin{equation}\label{boundcondw2}
\begin{array}{lll}
w(t,x) 
= 1 - A (T-t)^{\frac{1}{p+1}}\geq 1 - k^{\frac{1}{p+1}} (T-t)^{\frac{1}{p+1}} \geq u(t,x)
\quad\hbox{ in $[t_0,T)\times \Gamma$.}
\end{array}
\end{equation}
In case (ii), we also note that due to the boundary conditions on $u$, we have
\begin{equation}\label{boundcondw3}
w(t,x) \ge 0 = u(t,x) \quad\hbox{in $[t_0,T)\times \{R\}$.}
\end{equation}

By (\ref{eqnwcomp}), (\ref{boundcondw1}), (\ref{boundcondw2}) and (\ref{boundcondw3}) (in case (ii)),
along with the comparison principle and $y(t)\leq 1$ for all $t\in[0,T)$, we conclude that
\begin{equation}\label{usmallD}
u(t,x) \leq w(t,x)\leq \psi(x) \quad\hbox{ in $(0,T)\times D$.}
\end{equation}
In both cases, since $\psi$ is uniformly smaller than $1$ in compact subsets of $D$,
it follows from \eqref{usmallD} that $\mathcal{T} \cap D=\emptyset$. We also see that in case (ii),  
$\psi$ is uniformly smaller than $1$ in a neighborhood of $\{R\}$, so we can rule out touchdown at this point.
\end{proof}

\begin{proof}[Proof of Theorems~\ref{global result dim 1} and \ref{global result dim 1a}]
Let $\rho, \mathcal{A}$ be given by (\ref{simplified problem}). 
We first claim that $\mathcal{A}$ is nonempty, so that $\rho$ is well defined and positive.
We notice that, due to the assumption $\mu>\mu_1(p)$, there exists $\tau$ such that
\begin{equation}\label{condmutau}
\dfrac{\mu}{2\mu-\mu_1(p)} \leq \tau <1.
\end{equation}
Next we see that the condition $\delta(\beta,K)\leq 1$ is equivalent to
\begin{equation}\label{condphi12}
\phi_1(K):= \arctan\biggl[\sqrt{\dfrac{p+1}{p\mu\beta^2}(K^{-1}+p+1)}\biggr] 
\le \sqrt{\frac{p\mu K(K+\theta)}{(1+K)^2}}=:\phi_2(K),
\end{equation}
where $\theta=(p+1)^{-1}\in (0,1)$. We have
$$\frac{\partial}{\partial K}\biggl[\frac{K(K+\theta)}{(1+K)^2}\biggr]
=\frac{(2K+\theta)(1+K)-2K(K+\theta)}{(1+K)^3}>\frac{2(1-\theta)K}{(1+K)^3}>0,$$
so that $\phi_2(K)$ is monotonically increasing on $(0,\infty)$.
Moreover, $\phi_1(K)$ is monotonically decreasing on $(0,\infty)$.
Therefore, there exists $K>0$ satisfying \eqref{condphi12} if and only if
$$\lim_{K\to\infty} \phi_2(K)>\lim_{K\to\infty} \phi_1(K) \Longleftrightarrow
\sqrt{p\mu}> \arctan\Bigl[\dfrac{p+1}{\beta\sqrt{p\mu}}\Bigr]
 \Longleftrightarrow \beta > \frac{p+1}{\sqrt{p\mu}}\,\overline\cot\bigl[\sqrt{p\mu}\bigr].
$$
By assumption \eqref{hypmu1d0}, we may thus
find $\beta\in (d,d_0]$ satisfying  this condition. 
Moreover, \eqref{condphi12} is  then true for any sufficiently large $K>0$.
It follows that $\mathcal{A}$ is nonempty.
 
Next set
$$
\left\lbrace \begin{array}{ll}
D=(x_0+1+d,R), \quad \Gamma=\{x_0+1+d\} &\quad\hbox{ if $f$ satisfies (\ref{condition 1 on f})}, \\
\noalign{\vskip 2mm}
D=(x_1+1+d,x_0-1-d), \quad \Gamma=\partial D &\quad\hbox{ if $f$ satisfies (\ref{condition 2 on f}).} \\
\end{array}\right.
$$
By our assumption on $f$, we may select $(\tau,\beta,K)\in \mathcal{A}$ such that
\begin{equation}\label{finftyD}
\|f\|_{L^\infty(D)}\le \mu{\hskip 1pt}\hat\eps(\beta,K),
\end{equation}
with
$$\hat\eps(\beta,K) = \dfrac{1}{2} \left(\dfrac{\beta-d}{\beta}\right)^{p+1}
 \dfrac{S(t_0,\beta)}{K+\tau^{-p}} \min \bigl\{ H(t_0,\beta), G(t_0,\beta,K) \bigr\}
 \quad\hbox{ and }\quad t_0=t_0(\tau)=\dfrac{1-\tau^{p+1}}{(p+1)\| f\|_\infty}.$$

Under assumption~(\ref{condition 1 on f}), it follows from Proposition~\ref{type 1 quantitative} that
\begin{equation}\label{boundaryGamma}
(1 - u)^{p+1} \geq  (p+1)\mu{\hskip 0.5pt}\hat\eps(\beta,K)  (T-t)
\quad\hbox{ on $[t_0,T)\times\Gamma$.}
\end{equation}
Moreover, \eqref{boundaryGamma} remains true under assumption~(\ref{condition 2 on f}). 
Indeed, we may apply Proposition~\ref{type 1 quantitative}
with $x_0$ replaced by $x_1$, recalling $|x_1|\le |x_0|$ and noticing that $S(t,\beta)$ in (\ref{H and G}), 
hence $\hat\eps$ depends on $d_0=R-1-|x_0|$ in a monotonically increasing way.

 \smallskip

Now, in view of applying Lemma~\ref{basic supersol}, we claim that
\begin{equation}\label{simplification claim}
 (p+1) \mu{\hskip 0.5pt}\hat\eps(\beta,K) \le\dfrac{\tau^{p+1}}{T-t_0}.
\end{equation}
Indeed, since $H(t_0,\beta)\le  \text{\rm erf} \bigl( \frac{1}{\sqrt{t_0}}\bigr) \le 1$ by (\ref{H and G}), $K>0$ 
and $S(d_0+1,t_0,d_0-\beta)\le 1$, we have $\hat\eps\le \frac{\tau^{p}}{ 2}$. 
Therefore, using \eqref{condmutau},  $\mu_1(p)=2\mu_0(p)$ and $(p+1)(\mu - \mu_0(p))T\le 1$ by  Lemma~\ref{estimates for T}(ii), we obtain
$$ (p+1) \mu{\hskip 0.5pt}\hat\eps(\beta,K) \le  2(p+1)(\mu-\mu_0(p))\tau\hat\eps(\beta,K) \le  \dfrac{ 2\hat\eps(\beta,K)\tau}{T}
\le \dfrac{\tau^{p+1}}{T-t_0}.$$

By Lemma~\ref{basic supersol}, it follows that that $\mathcal{T}\cap D=\emptyset$,
 and that $R\not\in\mathcal{T}$ in case of (\ref{condition 1 on f}).
 Finally, let us show that $\mathcal{T}\cap\Gamma=\emptyset$.
By the continuity of $f$, assumptions~(\ref{condition 1 on f}) and (\ref{condition 2 on f})
 remain true for some $\tilde d<d$ close to $d$.
 Also, since the set $\mathcal{A}$ and the quantity $\bigl(\frac{\beta-d}{\beta}\bigr)^{p+1}$ increase as $d$ decreases,
it follows that the supremum in (\ref{simplified problem}) is a nonincreasing function of $d\in (0,d_0)$, 
so we deduce that $\mathcal{T}\cap\Gamma=\emptyset$. This concludes the proof.
\end{proof}


In view of the proof of Theorem~\ref{global result dim 1b},
we first establish the following quantitative type~I estimate,
which is a version of Proposition~\ref{type 1 quantitative},
 with the three parameters $(\beta, K, \eta)$ instead of $(\beta, K)$.

\begin{proposition}\label{type 1 quantitative1}
 Under the assumptions of Proposition~\ref{type 1 quantitative}, $u$ satisfies the type~I estimate \eqref{typeIest}
where now
\begin{equation}\label{def-epsilonbar1}
\overline\eps= \sup_{(\beta,K,\eta)\in\widehat{\mathcal{A}}}\  \hat\eps(\beta,K,\eta),
\end{equation} 
with
\begin{equation}\label{defepshat1}
\hat\eps(\beta,K,\eta) = \textstyle\frac12 \bigl(\textstyle\frac{\beta-d}{\beta}\bigr)^{p+1} 
\min \Bigl\{ \textstyle\frac{S(\overline T,0)}{K+\eta^{-p}}G( \overline T,\beta,K,\eta), \ 
\textstyle\frac{S(t_0,\beta)}{K+\tau^{-p}} \min \bigl[ H(t_0,\beta), G(t_0,\beta,K,\eta) \bigr]\Bigr\},
\end{equation}
$$\widehat{\mathcal{A}}=\mathcal{A}_2:=
\Bigl\{ (\beta,K,\eta)\in (d,d_0)\times(0,\infty)\times(0,1],\ 
K\ge \textstyle\frac{p\eta}{\mu\beta^2}-\textstyle\frac{1}{(p+1)\eta^p},\ \delta(\beta,K,\eta) \leq 1\Bigr\},$$
 where $\overline T= \frac{1}{(p+1)(\mu - \mu_0(p))}$,
the functions $S, H$ are defined in (\ref{H and G}) and $G, \delta$ are defined in \eqref{H and G2},
\end{proposition}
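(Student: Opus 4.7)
The plan is to follow the same scheme as for Proposition~\ref{type 1 quantitative}, with the choice $q=0$ in Lemma~\ref{control Theta} and Lemma~\ref{Explicita0}, but now keeping $\eta\in (0,1]$ as a free parameter instead of setting $\eta=1$. Since $\eta<1$ is allowed, the subregion $\Sigma_3^\eta$ in \eqref{DefSubsetsSigma} is no longer empty, so we must also produce the estimate $\eps\le\eps_2$ of \eqref{epsilon 1}, which was automatically satisfied in the proof of Proposition~\ref{type 1 quantitative}.

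First, I fix $(\beta,K,\eta)\in\mathcal{A}_2$. The condition $K\ge \frac{p\eta}{\mu\beta^2}-\frac{1}{(p+1)\eta^p}$ together with $\delta(\beta,K,\eta)\le 1$ are exactly what Lemma~\ref{Explicita0} needs to produce the cut-off $a=a_{\beta,K,\eta}$ in $W^{2,2}([0,1+\beta])$ satisfying \eqref{diffeqa4}--\eqref{diffeqa3} and \eqref{amonotone}. I then invoke Lemma~\ref{control Theta} (with $q=0$) to obtain the pointwise type~I estimate \eqref{estTypeILemma}, provided $\eps\le \min(\eps_1,\eps_2)$. Evaluating at $x=x_0\pm(1+d)$ and using $a(1+d)=\bigl(\frac{\beta-d}{\beta}\bigr)^{p+1}$ (last line of \eqref{function a q=0}) will yield the target inequality \eqref{typeIest} with $\overline\eps$ replaced by $\min(\eps_1,\eps_2)$.

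The core step is to bound $\eps_1$ and $\eps_2$ from below in terms of the error-function quantities $S,H,G$. For $\eps_1$, I proceed exactly as in the proof of Proposition~\ref{type 1 quantitative}: I use the heat semigroup comparison Proposition~\ref{prop semigroup comparison} (with $\lambda=1+\beta$) to produce the factor $S(t_0,\beta)$; Lemma~\ref{lemTransfErr} -- whose proof carries over verbatim to general $\eta\in(0,1]$ since only the explicit form of $a_{\beta,K,\eta}$ in \eqref{function a q=0} changes, and the parameters $L,\Gamma,A,\alpha,\delta$ of \eqref{H and G2} are the direct $\eta$-analogues of those in \eqref{H and G} -- gives the minimum of $H(t_0,\beta)$ and $G(t_0,\beta,K,\eta)$; and \eqref{estimate h t0} combined with \eqref{Wcalcul} controls $h(u(t_0,\cdot))$ by $W(0,\tau,K)=K+\tau^{-p}$. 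This produces
\[
\eps_1\ge \tfrac12\,\tfrac{S(t_0,\beta)}{K+\tau^{-p}}\,\min\bigl\{H(t_0,\beta),\,G(t_0,\beta,K,\eta)\bigr\}.
\]
For $\eps_2$, the infimum in \eqref{epsilon 1} is over $(t,x)\in[t_0,\overline T]\times I_0$, so I apply the same heat semigroup comparison but with $\lambda=1$, yielding the factor $S(t,0)$; the spatial infimum over $I_0$ is again $\tfrac12 G(t,\beta,K,\eta)$ by \eqref{quotient inside B q=0}; and the denominator is $W(0,\eta,K)=K+\eta^{-p}$ by \eqref{Wcalcul}. The time infimum is then handled by the monotonicity property \eqref{Gmonot} of $G$ in $t$ together with the straightforward monotonicity of $S(\cdot,0)$ (both factors in $S(t,0)=e^{-\pi^2 t/(4(d_0+1)^2)}[1-e^{-d_0^2/t}]$ are nonincreasing in $t$), so that the infimum is attained at $t=\overline T$:
\[
\eps_2\ge \tfrac12\,\tfrac{S(\overline T,0)}{K+\eta^{-p}}\,G(\overline T,\beta,K,\eta).
\]

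Combining these bounds gives $\min(\eps_1,\eps_2)\ge \hat\eps(\beta,K,\eta)$ with $\hat\eps$ as in \eqref{defepshat1}, and taking the supremum over $(\beta,K,\eta)\in\mathcal{A}_2$ yields \eqref{typeIest} with $\overline\eps$ as in \eqref{def-epsilonbar1}. The only nonroutine aspect is verifying that Lemma~\ref{lemTransfErr} remains valid for general $\eta\in(0,1]$; this reduces to the observation that the cut-off \eqref{function a q=0} has the same piecewise structure for every $\eta$, so that the change of variable $x=(r-r_0)/(1-r_0)$ in the inside-$I_0$ computation delivers exactly the expression for $G$ in \eqref{H and G2}, while the outside computation (which involves only $r\in(1,1+\beta]$) is independent of $\eta$ and still produces $\tfrac12 H(t,\beta)$.
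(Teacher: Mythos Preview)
Your proof is correct and follows essentially the same route as the paper's: fix $(\beta,K,\eta)\in\mathcal{A}_2$, build $a_{\beta,K,\eta}$ via Lemma~\ref{Explicita0}, apply Lemma~\ref{control Theta} with $q=0$ and $\eps=\min(\eps_1,\eps_2)$, bound $\eps_1$ exactly as in Proposition~\ref{type 1 quantitative} (with the $\eta$-dependent $G$), and bound $\eps_2$ via Proposition~\ref{prop semigroup comparison} with $\lambda=1$ together with the monotonicity in $t$ of $S(\cdot,0)$ and $G(\cdot,\beta,K,\eta)$ to place the time infimum at $\overline T$. The paper records the same ingredients (your monotonicity remark for $S(\cdot,0)$ and the carryover of Lemma~\ref{lemTransfErr} to general $\eta$ are exactly what the paper uses), so there is no substantive difference.
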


\begin{proof}
Let $d\in (0,d_0)$ and $\tau\in (0,1)$.
Let $(\beta,K,\eta)\in\mathcal{A}_2$ and let $a(r)=a_{\beta,K,\eta}(r)$ be given by  (\ref{function a q=0}).
As in the proof of Proposition~\ref{type 1 quantitative}, 
 we shall rely on Lemma~\ref{control Theta} with $q=0$, and we first express the infima in 
(\ref{epsilon 1b}) and (\ref{epsilon 1}) in terms of the error function.
Denote by $e^{t\Delta_\mathbb{R}}$ the heat semigroup on $\mathbb{R}$.
By the proof of Lemma~\ref{lemTransfErr} we have
\begin{equation}\label{quotient inside B q=0-1}
\displaystyle\inf_{0<r<1} \dfrac{e^{t\Delta_\mathbb{R}}\chi_{(-1,1)}(r)}{a(r)}
=\dfrac{1}{2} G(t,\beta,K,\eta),\quad t>0,
\end{equation}
and
\begin{equation}\label{quotient outside B2} 
\displaystyle\inf_{1<r<1+\beta} \dfrac{e^{t\Delta_\mathbb{R}}\chi_{(-1,1)}(r)}{a(r)} 
=\dfrac{1}{2} H(t,\beta),\quad t>0.
\end{equation}
Moreover,
\begin{equation}\label{Gmonot1}
t\mapsto S(t,\beta),\ G(t,\beta,K,\eta) \ \hbox{ are nonincreasing for $t>0$.}
\end{equation}
The proof of Proposition~\ref{type 1 quantitative1}
is then completely similar to that of Proposition~\ref{type 1 quantitative},
applying Lemma~\ref{control Theta} with the function $a(r)$ given by Lemma~\ref{Explicita0}.
The only difference is that, since now $\eta$ may be less than $1$, 
we need to choose $\eps=\min(\eps_1,\eps_2)$ in Lemma~\ref{control Theta}.
To estimate $\eps_2$, we use \eqref{epsilon 1},  \eqref{Wcalcul}, (\ref{quotient inside B q=0}),
 \eqref{compsemigroup} with $\lambda=1$, and \eqref{quotient inside B q=0-1} to deduce
$$
\begin{array}{lll}
\eps_2
&=\dfrac{1}{W(0,\eta,K)} \displaystyle\inf_{(t,x)\in [t_0,\overline T]\times I_0} \dfrac{e^{t\Delta_\Omega}\chi_{I_0}(x)}{a(|x-x_0|)} \\
&\ge\dfrac{1}{2} \dfrac{1}{\eta^{-p} + K}\displaystyle\inf_{t\in [t_0,\overline T]} S(t,0) \,G(t,\beta,K,\eta) 
=\dfrac{1}{2} \dfrac{S(\overline T,0)}{\eta^{-p} + K} \, G(\overline T,\beta,K,\eta).
\end{array}
$$
The conclusion follows by taking the supremum over $(\beta,K,\eta)\in\mathcal{A}_2$.
\end{proof}

\begin{proof}[Proof of Theorem~\ref{global result dim 1b}]
Let $\mathcal{A}$ be given by \eqref{condition on parameters 2}. 
Picking any  $\tau\in (0,1)$, $\beta\in (d,d_0)$,  $K>0$ 
and then $\eta\in (0,1)$ small,
we see that $\mathcal{A}$ is nonempty, so that $\rho$ is well defined and positive.
 
Now arguing as in the second paragraph of the proof of Theorems~\ref{global result dim 1}-\ref{global result dim 1a}, 
 we may select $(\tau,\beta,K,\eta)\in \mathcal{A}$ satisfying  \eqref{finftyD} and \eqref{boundaryGamma}, 
 where  $\hat\eps=\hat\eps(\beta,K,\eta)$ is now given by (\ref{defepshat1}) and $t_0$ is given by \eqref{deft0}.
Applying Lemma~\ref{basic supersol}
 in the same way as before, we finally conclude that $\mathcal{T}\cap \overline{D}=\emptyset$.
\end{proof}

\section{Proof of Theorems~\ref{global result dim 1c}-\ref{global result dim 1c2}}

 It is based on refinements of various ingredients of the proof of Theorems~\ref{global result dim 1} and \ref{global result dim 1a}.
Namely, we introduce a more  precise cut-off 
function $a(x)$, corresponding to the choice $q=1$ in 
the function $h$ (cf.~\eqref{defhKq} and Lemma~\ref{control Theta}).
We also use an improved lower estimate on $u_t$ (see Lemma~\ref{low bound u_t2}), 
and an upper estimate of $u$ for $t$ small at points of small permittivity (see Lemma~\ref{u upper estimate}).

\subsection{Determination of the family of functions $a( r)$ in the case $q=1$}
\label{construction a q=1}

In this subsection, we compute the solution of the auxiliary problem \eqref{diffeqa4}-\eqref{diffeqa3} 
in the case $q=1$ and $\eta =1$, which will be used to prove 
  Theorems~\ref{global result dim 1c}-\ref{global result dim 1c2}.
 The following Lemma~\ref{Explicita1} is the analogue of Lemma~\ref{Explicita0} for $q=0$.
We see that the expression of $a$ in (\ref{function a q=1B}) is significantly more complicated than for $q=0$.
As for the choices of $q\in [0,1]$ other than $q=0$ or $q=1$, they seem quite difficult to investigate and
have been left out of this study.

\begin{lemma}\label{Explicita1}
Let $\mu,\beta,K>0$, $q=1$, $\eta=1$ 
and let $h, F$ be given by \eqref{defhKq}, \eqref{diffineqa2}. 
 Assume 
\begin{equation}\label{hypKmubeta}
 K\mu\beta^2\le \frac{p(p+2)-K}{p} 
\end{equation}
and let
$$A_0 =\sqrt{\frac{p(1+K)+(p(p+2)-K)K}{p(1+K)^2}},\ \
A_1 = \arctan\left( \sqrt{\frac{p(1+K)}{p(p+2)-K}+K} \right),$$
$$A_2 = \arctan\left( \sqrt{\frac{p}{p(p+2)-K}} \right),\quad A_3 = \arctan\left(\frac{1}{\sqrt{K\mu\beta^2}} \right),$$
$$\delta_1(K)=\dfrac{A_1}{A_0\sqrt{K\mu}},\quad 
\delta_2 (\beta,K) = \dfrac{A_3-A_2}{\sqrt{K\mu}}\ge 0.$$
Assume in addition that $\delta_1+\delta_2\le 1$.
Then there exist $r_0\in [0,1)$ and a solution $a\in  W^{2,2}([0,1+\beta])$ 
of \eqref{diffeqa4}-\eqref{diffeqa3},~\eqref{amonotone}.
The pair 
$(r_0,a)$ is then unique and it is given by
\begin{equation}\label{function a q=1B}  
a_{\beta,K}(r)= \left\lbrace \begin{array}{ll}
D_1 , &  r \in [0, r_0), \\
\noalign{\vskip 1mm}
D_1 \cos^\alpha\bigl(A_0\sqrt{K\mu}(r-r_0)\bigr), &  r\in [r_0,r_1), \\
\noalign{\vskip 2mm}
D_2 \cos^{p+1} \left(\sqrt{K\mu}(r-1)+A_3\right), &  r\in [r_1,1], \\
\noalign{\vskip 1mm}
\left(\dfrac{1+\beta -r}{\beta}\right)^{p+1}, & r\in (1,1+\beta],
\end{array}\right.
\end{equation}
where $r_0 = 1-\delta_1-\delta_2$, $r_1=1-\delta_2$,
$$\alpha = \frac{p+1}{(1+K)A_0^2}, \quad D_2= \left(1+\frac{1}{K\mu\beta^2}\right)^{\frac{p+1}{2}}, \quad D_1 =  D_2 \frac{D_{11}^{\alpha}}{D_{12}^{p+1}},$$
$$D_{11} = \sqrt{1+K + \frac{p(1+K)}{p(p+2)-K}}, \quad
D_{12} = \sqrt{1 + \frac{p}{p(p+2)-K}}.$$
\end{lemma}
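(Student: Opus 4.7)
The plan is to determine $a(r)$ piece by piece from right to left, exploiting that the boundary condition $a(1+\beta)=0$ together with the normalization $a(1)=1$ decouples \eqref{diffeqa1}--\eqref{diffeqa3} into an outer problem on $(1,1+\beta]$ and an inner problem on $[r_0,1]$, the two being joined at $r=1$ by $C^1$ matching. I would start with the outer piece. Substituting $y = K(1-u)^{p+1} \in (0,K)$ in the expression $h'^2/(hh'')$ for $q=1$ gives the simple form $(p-y)^2/[p(p+1)(1+y)]$, whose supremum over $(0,K)$ equals $p/(p+1)$ and is attained as $y\to 0$, precisely because $K \leq p(p+2)$ (a consequence of \eqref{hypKmubeta}). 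The outer ODE thus reduces to $a a'' = \frac{p}{p+1}(a')^2$, whose solution satisfying $a(1+\beta)=0$ and $a(1)=1$ is $a(r) = ((1+\beta-r)/\beta)^{p+1}$ as in Lemma~\ref{Explicita0}, with $a'(1^+) = -(p+1)/\beta$.

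For the interior $r<1$, the same substitution turns \eqref{diffineqa2} (with $\eta=1$) into
\[
F(r,\xi) = \sup_{y\in (0,K)} \Phi(y), \qquad \Phi(y) := \frac{(p-y)^2 \xi^2 - p(p+1)^2 K\mu}{p(p+1)(1+y)}.
\]
The first key observation is that the critical point equation $\Phi'(y)=0$ reads $(p-y)(p+2+y)\xi^2 = p(p+1)^2 K\mu$; since $(p-y)(p+2+y) = (p+1)^2 - (y+1)^2$ is strictly decreasing on $y>-1$, there is at most one critical point $y^*$, and the sign analysis of $\Phi'$ shows $\Phi$ is decreasing on $(-1,y^*)$ and increasing on $(y^*,\infty)$. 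Hence $y^*$ is a \emph{minimum}, and the supremum is always attained at one of the boundary values $y=0$ or $y=K$. A direct algebraic comparison then gives
\[
\Phi(0) \geq \Phi(K) \iff \xi^2 \geq \xi_c^2, \qquad \xi_c^2 := \frac{p(p+1)^2 K\mu}{p(p+2)-K}.
\]
Hypothesis \eqref{hypKmubeta} is precisely $(p+1)^2/\beta^2 \geq \xi_c^2$, so that at $r=1$, where $|a'/a| = (p+1)/\beta$, we start in the $y=0$ regime.

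Integrating backwards from $r=1$: while $(a'/a)^2 \geq \xi_c^2$ the ODE becomes $a a'' = \frac{p}{p+1}(a')^2 - (p+1)K\mu\, a^2$, and the ansatz $a = A\cos^{p+1}(\omega(r-b))$ solves it iff $\omega = \sqrt{K\mu}$. The matching conditions $a(1)=1$ and $a'(1)=-(p+1)/\beta$ then force $\omega(1-b) = A_3$ and $A = D_2$. Tracking $(a'/a)^2 = (p+1)^2 K\mu \tan^2(\sqrt{K\mu}(r-1)+A_3)$ backwards, it crosses $\xi_c^2$ exactly when the argument equals $A_2$, i.e.\ at $r_1 = 1 - \delta_2$. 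On $[r_0,r_1]$ the regime switches to $y=K$, producing $(p+1)(1+K) a a'' = \frac{(p-K)^2}{p}(a')^2 - (p+1)^2 K\mu\, a^2$. The ansatz $a = C\cos^\alpha(\omega'(r-c))$ is consistent iff, by matching the $\tan^2$ and constant coefficients, $\alpha = (p+1)/[(1+K)A_0^2]$ and $\omega' = A_0\sqrt{K\mu}$. The boundary condition $a'(r_0)=0$ forces $c = r_0$, and the $C^1$ gluing at $r_1$ yields $\omega'(r_1-r_0) = A_1$, i.e.\ $r_0 = 1 - \delta_1 - \delta_2$, while equating values at $r_1$ (using $1/\cos(A_1) = D_{11}$ and $1/\cos(A_2) = D_{12}$) gives $D_1 = D_2\, D_{11}^\alpha / D_{12}^{p+1}$. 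The assumption $\delta_1+\delta_2 \leq 1$ guarantees $r_0 \in [0,1)$, and we prolong by $a \equiv D_1$ on $[0,r_0]$, which preserves $W^{2,2}$ regularity and \eqref{amonotone}. Uniqueness follows at each stage from the fact that the matching and boundary conditions uniquely determine the ansatz parameters.

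The main obstacle is the critical-point analysis in the second paragraph: recognizing that the only interior extremum of $\Phi$ in $y$ is a minimum, so that the supremum defining $F(r,\xi)$ reduces to the maximum of the two explicit linear-in-$\xi^2$ branches $\Phi(0)$ and $\Phi(K)$. This is what produces the transition at $\xi^2 = \xi_c^2$ and forces the two-piece cosine structure of $a$ on $[r_0,1]$ visible in \eqref{function a q=1B}; without this reduction one would face an implicit nonlinear ODE with no tractable closed form.
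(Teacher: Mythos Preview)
Your proposal is correct and follows essentially the same strategy as the paper: solve on $(1,1+\beta]$ via the power function, reduce $F(r,\xi)$ on $[0,1)$ to the maximum of two explicit branches $m_i\xi^2-M_i$, then build the two cosine-power pieces on $[r_1,1]$ and $[r_0,r_1]$ by $C^1$ matching and extend by a constant on $[0,r_0]$. The one technical difference is precisely in the step you call the main obstacle: the paper substitutes $s=1/(1+Ky)$ and observes that the resulting expression is \emph{convex} in $s$, so the supremum is at an endpoint, whereas you analyze the critical point of $\Phi(y)$ directly and show it is a minimum---both arguments yield the same two-branch structure and the same threshold $\xi_c^2=\xi_0^2$. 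The paper is also slightly more explicit on two points you treat implicitly: it proves the monotonicity of $\xi(r)=a'/a$ as a preliminary step (guaranteeing a unique switching point $r_1$), and it derives uniqueness by reducing each ODE $aa''=m(a')^2-Ma^2$ to the linear equation $\phi''=-(1-m)M\phi$ via $\phi=a^{1-m}$, rather than checking an ansatz.
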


 In view of the proof of Lemma~\ref{Explicita1}, we first compute the function $F$ in \eqref{diffineqa2}.

\begin{lemma}\label{Explicita1F}
 Let $\mu,\beta,K>0$, $q=1$, $\eta=1$ 
and let $h, F$ be given by \eqref{defhKq}, \eqref{diffineqa2}. 
Assume \eqref{hypKmubeta} and set
$$m_1=\frac{(p-K)^2}{p(p+1)(1+K)},\ \ M_1=\frac{(p+1)K\mu}{1+K},\ \ m_2=\frac{p}{p+1},\ \ M_2=(p+1)K\mu.$$
Then we have
\begin{equation}\label{orderMm12}  
0<M_1<M_2,\quad 0\le m_1<m_2<1,
\end{equation}
\begin{equation}\label{Frgeq1}
F(r,\xi) = \frac{p}{p+1},\quad r>1,
\end{equation}
and
\begin{equation}\label{Fsplit}
F(r,\xi) = \left\{ \begin{array}{ll}
m_1 \xi^2 - M_1  &  \text{if} \  |\xi| <  \xi_0 \\
\noalign{\vskip 1.5mm}
m_2 \xi^2 - M_2  &  \text{if} \  |\xi| \geq \xi_0,
\end{array}\right.
\qquad 0\le r<1,
\end{equation}
where 
\begin{equation}\label{DefXi0}
\xi_0:=\sqrt{\frac{M_2-M_1}{m_2-m_1}}=(p+1)\sqrt{\frac{pK\mu}{p(p+2)-K}}.
\end{equation}
\end{lemma}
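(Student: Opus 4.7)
The plan is to convert both suprema in \eqref{diffineqa2} into explicit one-variable extremal problems via the substitution $v=1-u$, $y:=Kv^{p+1}\in (0,K)$. With $q=1$ and $\eta=1$, starting from $h=v^{-p}+Kv$, $h'=pv^{-p-1}-K$, $h''=p(p+1)v^{-p-2}$, a short calculation gives
\[
\frac{h'(u)^2}{h(u)h''(u)} \;=\; \frac{(p-y)^2}{p(p+1)(1+y)} \;=:\; \phi(y),\qquad \frac{(p+1)K\mu}{(1-u)^p h(u)} \;=\; \frac{(p+1)K\mu}{1+y} \;=:\; g(y),
\]
with limiting values $\phi(0^+)=m_2$, $\phi(K^-)=m_1$, $g(0^+)=M_2$, $g(K^-)=M_1$.

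First I would verify \eqref{orderMm12}. The inequalities $M_1<M_2$, $m_1\ge 0$ and $m_2<1$ are immediate from the definitions; the only nontrivial one, $m_1<m_2$, rearranges to $K(p(p+2)-K)>0$, which follows from \eqref{hypKmubeta} since that assumption gives $K\le p(p+2)-pK\mu\beta^2<p(p+2)$. For \eqref{Frgeq1}, I would compute
\[
\phi'(y)=-\frac{(p-y)(p+2+y)}{p(p+1)(1+y)^2},
\]
which is negative on $(0,p)$ and positive on $(p,\infty)$. Hence the supremum of $\phi$ over $(0,K)$ is attained only as a boundary limit, equal to $\max(m_2,m_1)=m_2=p/(p+1)$, proving the formula for $r>1$.

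For the main claim \eqref{Fsplit}, set $\psi(y):=\phi(y)\xi^2-g(y)=N(y)/D(y)$ with $N(y)=(p-y)^2\xi^2-p(p+1)^2K\mu$ and $D(y)=p(p+1)(1+y)$. A direct computation yields
\[
N'(y)D(y)-N(y)D'(y) \;=\; p(p+1)\Bigl\{p(p+1)^2K\mu-(p-y)(p+2+y)\xi^2\Bigr\},
\]
and via the factorization $(p-y)(p+2+y)=(p+1)^2-(y+1)^2$ the bracketed quantity $H(y)$ is strictly increasing on $y>-1$. Hence $\psi'$ has at most one zero in $(0,K)$ and changes sign there from $-$ to $+$, so any interior critical point is a local minimum. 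Consequently $\sup_{(0,K)}\psi=\max(\psi(0^+),\psi(K^-))=\max(m_2\xi^2-M_2,\ m_1\xi^2-M_1)$. The two endpoint values coincide precisely when $\xi^2=(M_2-M_1)/(m_2-m_1)$, and a short calculation using the explicit formulas for $m_i,M_i$ reduces this to $\xi_0^2=p(p+1)^2K\mu/(p(p+2)-K)$, matching \eqref{DefXi0} and yielding the piecewise formula \eqref{Fsplit}.

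The main obstacle is recognizing that the competition between $\phi$ (non-monotone when $K>p$) and $g$ (strictly monotone) still precludes an interior maximum of $\psi$; once the factorization $(p-y)(p+2+y)=(p+1)^2-(y+1)^2$ is isolated, the strict monotonicity of $H$ is automatic and the rest is boundary-value bookkeeping.
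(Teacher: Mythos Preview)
Your proof is correct. Both you and the paper reduce the suprema to one-variable problems via the substitution $y=K(1-u)^{p+1}$ (the paper writes $y=(1-u)^{p+1}$ and carries the factor $K$), and both conclude by showing that the supremum for $r<1$ is attained only at the endpoints $y\to 0^+$ or $y\to K^-$. The difference lies in \emph{how} the endpoint conclusion is reached. The paper makes the further change of variable $s=1/(1+Ky)$, after which $\hat F$ becomes an affine-plus-$1/s$ function, hence convex in $s$; convexity forces the maximum to the boundary immediately. You instead differentiate $\psi=N/D$ directly and use the factorization $(p-y)(p+2+y)=(p+1)^2-(y+1)^2$ to see that the numerator of $\psi'$ is strictly increasing in $y$, so $\psi'$ has at most one sign change (from $-$ to $+$) and any interior critical point is a minimum. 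Your route is slightly more computational but entirely self-contained and avoids the need to spot the convexifying substitution; the paper's route is shorter once the substitution is found. For the case $r>1$ you specialize to $q=1$ from the start, whereas the paper carries general $q\in(0,1]$ and reduces to a quadratic inequality $Q(X)\ge 0$; your argument via the explicit formula for $\phi'$ is cleaner in the $q=1$ setting.
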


\begin{proof}
\textit{ Step 1: Computation of $F(r,\xi)$ for $r>1$.} 
In this step, we keep $q\in (0,1]$, since the computation requires the same amount of effort.
 Since $K<p(p+2)$ by \eqref{hypKmubeta}, we have in particular
\begin{equation}\label{construction of a 1b}
K\leq \dfrac{p(p+q+1)}{q}.
\end{equation}
We claim that,  under condition \eqref{construction of a 1b}, we have
\begin{equation}\label{condaexterior2}
\displaystyle\sup_{u\in (0,1)}\left[\dfrac{h'^2(u)}{hh''(u)}\right]=\dfrac{h'^2(1)}{hh''(1)}=\dfrac{p}{p+1},
\end{equation}
which immediately yields \eqref{Frgeq1}.

To show \eqref{condaexterior2}, we compute
$$hh'' =(1-u)^{-2p-2} \bigl[1 + K(1-u)^{q+p}\bigr] \bigl[p(p+1) + Kq(q-1)(1-u)^{q+p}\bigr]$$
and
$$h'^2=(1-u)^{-2p-2}\bigl[p-Kq(1-u)^{q+p}\bigr]^2,$$
hence
\begin{equation}\label{calculQX}
\dfrac{h'^2}{hh''}=
\dfrac{\bigl[p-Kq(1-u)^{q+p}\bigr]^2}{\bigl[1 + K(1-u)^{q+p}\bigr] \bigl[p(p+1) + Kq(q-1)(1-u)^{q+p}\bigr]}.
\end{equation}
Now setting $X=K(1-u)^{q+p}$, we see that \eqref{condaexterior2} is equivalent to
$$Q(X):=p(1 + X)[p(p+1) + q(q-1)X]-(p+1)(p-qX)^2\ge 0\quad\hbox{ for all $X\in [0,K]$.}$$
Since 
\begin{eqnarray*}
Q(X)&=&\bigl[pq(q-1)+p^2(p+1)+2(p+1)pq\bigr]X -\bigl[pq(1-q)+(p+1)q^2\bigr]X^2 \\
&=& p\bigl[q^2+p^2+2pq+p+q\bigr]X -q(p+q)X^2 
=(p+q)X\bigl[p(p+q+1) -qX\bigr]
\end{eqnarray*}
and $0\le X\le K$, claim \eqref{condaexterior2} follows from (\ref{construction of a 1b}).

\textit{ Step 2: Computation of $F(r,\xi)$ for $r<1$.} 
In view of \eqref{hypKmubeta}, properties \eqref{orderMm12} and \eqref{DefXi0} follow from
\begin{equation}\label{calculM1M2}
M_2-M_1=\frac{(p+1)K^2\mu}{1+K}>0,
\qquad m_2-m_1 
=\frac{(p(p+2)-K)K}{p(p+1)(1+K)}.
\end{equation}
 Next, taking $q=1$ and setting $y=(1-u)^{p+1}$, we have
$$F(r,\xi)= \sup_{y\in (0,1)}\hat F(\xi,y),\qquad
\hat F(\xi,y):= \dfrac{(p-Ky)^2}{p(p+1)(1+Ky)} \xi^2
- \dfrac{(p+1)K\mu}{1+Ky}. 
$$
Setting $s=\frac{1}{1+Ky}$, i.e., $Ky=s^{-1}-1$, we may rewrite
$$\hat F(\xi,y)=\dfrac{s(s^{-1}-p-1)^2}{p(p+1)} \xi^2-(p+1)K\mu s 
=C_1(\xi)s+C_2(\xi)\, +\,\xi^2\dfrac{s^{-1}}{p(p+1)}.$$
Since, for each fixed $\xi$, $\hat F$ is a convex function of $s$, it follows that
$\sup_{y\in [0,1]}\hat F(\xi,y)$ is achieved for $y=0$ or $1$.
Consequently, 
\begin{equation}\label{a(x) diff ineq q=1}
F(r,\xi)= \max \left\lbrace m_1\xi^2-M_1, \ m_2\xi^2 - M_2 \right\rbrace,
\end{equation}
which immediately yields \eqref{Fsplit}.
\end{proof}

\begin{proof}[Proof of Lemma~\ref{Explicita1}]
We split the proof in three steps.

\textit{Step 1: Preliminaries and monotonicity of $\xi(r)$.} 
Assume that there exist $r_0\in [0,1)$ and a solution $a\in  W^{2,2}([0,1+\beta])$ of \eqref{diffeqa4}-\eqref{diffeqa3}, \eqref{amonotone}.
We note in particular that $a\in C^1([r_0,1+\beta])$.
We shall show that $a$ is necessarily given by \eqref{function a q=1B}. 
It will then be a simple matter to show that this is indeed a solution.

First, in view of \eqref{Frgeq1}, it follows from the proof of Lemma~\ref{Explicita0} 
that $a$ is necessarily given by \eqref{function a q=1B} on $[1,1+\beta]$.

 We next claim that
\begin{equation}\label{ximonot}
\hbox{ $\xi(r) := \dfrac{a'(r)}{a(r)}\le 0$ is a monotone decreasing function of $r\in [r_0,1]$.}
\end{equation}
Indeed we can compute~(a.e.)
\begin{equation*}
 \xi '(r)  =  \dfrac{a''(r)}{a(r)} -\left(\dfrac{a'(r)}{a(r)}\right)^{2} = F(r,\xi(r)) - \xi^2(r) 
         =  \max \left\lbrace (m_1-1) \xi^2(r)-M_1, \ (m_2-1) \xi^2(r)- M_2 \right\rbrace <0,
\end{equation*}
 by \eqref{a(x) diff ineq q=1} and \eqref{orderMm12}.
On the other hand, in view of \eqref{hypKmubeta} and \eqref{DefXi0}, we have
$\xi (1)= -\frac{p+1}{\beta} \le  -\xi_0$.
By \eqref{ximonot}, it follows that there exists a unique $r_1\in (r_0,1]$ such that 
 $\xi(r_1)= -\xi_0$, with
\begin{equation}\label{Xisplit}
 \left\{ \begin{array}{ll}
\xi(r) > -\xi_0  &  \text{for all} \ r\in[r_0,r_1),\\
\noalign{\vskip 1mm}
\xi(r) < -\xi_0 &  \text{for all} \  r\in(r_1,1],
\end{array}\right.
\end{equation}
and that $r_1<1$ unless we have equality in \eqref{hypKmubeta}.

\textit{Step 2: Determination of $r_1$ and of $a$ in $[r_1,1]$.} 
If we have equality in \eqref{hypKmubeta}, then $\xi (1)= -\xi_0$ and we go directly to Step 3.
In the rest of this step, we thus assume that the inequality in \eqref{hypKmubeta} is strict, hence $r_1<1$.
By \eqref{Fsplit}, \eqref{Xisplit}, Step 1 and the fact that $a\in C^1([r_0,1+\beta])$, 
it follows that $a$ solves the problem
\begin{equation}\label{2eme morceau}
\dfrac{a''(r)}{a(r)} = m_2 \left(\dfrac{a'(r)}{a(r)}\right)^{2} - M_2, \quad \text{for } r_1<r<1, \qquad
a(1-) = 1, \qquad a'(1-) = -\frac{p+1}{\beta}.
\end{equation}
 Similarly as for (\ref{resolphi00}),
setting $\phi_2(r) = a^{1-m_2}(r)$, the ODE in \eqref{2eme morceau} is reduced to the equation
\begin{equation}\label{eqnPhi2}
\phi_2''=(1-m_2)a^{-m_2-1}[a a''-m_2(a')^2]=-(1-m_2)M_2\phi_2.
\end{equation}
 The solution of \eqref{eqnPhi2} is of the form
\begin{equation}\label{formPhi2}
\phi_2(r) = D_2^{1-m_2} \cos \left(\sqrt{M_2(1-m_2)}(r-1)+\theta\right)
=D_2^{1-m_2} \cos \left(\sqrt{K\mu}(r-1)+\theta\right),
\end{equation}
 for some constant $\theta\in[0,2\pi)$.
Since $\phi_2>0$ and $\phi_2'<0$ on $[r_1,1]$, this imposes
$$\Bigl[\sqrt{K\mu}(r_1-1)+\theta,\theta\Bigr]\subset \Bigl(0,\frac{\pi}{2}\Bigr),$$
 that is, 
$$ r_1>1-\dfrac{\theta}{\sqrt{K\mu}}, \quad \theta<\frac{\pi}{2}.$$
In particular, we have
\begin{equation}\label{calculxi2}
\xi(r) = \dfrac{1}{1-m_2}\dfrac{\phi_2'(r)}{\phi_2(r)} = -\sqrt{\dfrac{M_2}{1-m_2}} \tan \big(\sqrt{K\mu}(r-1)+\theta\big),
\quad r_1<r<1.
\end{equation}

By a simple computation, the boundary conditions in~\eqref{2eme morceau} give
\begin{equation*}
\theta = \arctan \left(\frac{p+1}{\beta} \sqrt{\frac{1-m_2}{M_2}}\right) 
= A_3 \in \left(0,\frac{\pi}{2}\right), \qquad
D_2^{1-m_2}    = \sqrt{1+ \frac{1-m_2}{M_2}\left(\frac{p+1}{\beta}\right)^2} = \sqrt{1+\frac{1}{K\mu\beta^2}}.
\end{equation*}
 On the other hand, by \eqref{Xisplit}, we have $\xi(r_1)=-\xi_0$. By \eqref{calculxi2},  $\xi(r_1)= -\xi_0$ and \eqref{calculM1M2}, 
 we deduce
$$\tan \big(\sqrt{K\mu}(r_1-1)+\theta \big) = \sqrt{\frac{(M_2-M_1)(1-m_2)}{(m_2-m_1)M_2}}
=\sqrt{\frac{p}{p(p+2)-K}}$$
hence,
$$r_1 = 1-\dfrac{1}{\sqrt{K\mu}} \left[ \arctan \left( \sqrt{\frac{1}{K\mu\beta^2}} \right)
- \arctan\left(\sqrt{\frac{p}{p(p+2)-K}}\right)\right]=1-\delta_2.$$
 Since we assume $\delta_1+\delta_2\le 1$, hence $0\le \delta_2<1$, 
we note that we do have $0<r_1\le 1$.

 \textit{Step 3: Determination of $r_0$ and of $a$ in $[r_0,r_1]$ and conclusion.}
By \eqref{Fsplit}, \eqref{Xisplit}, Step 2 and the fact that $a\in C^1([r_0,1+\beta])$, 
it follows that $a$ solves the problem
\begin{equation}\label{1ere morceau}
\dfrac{a''(r)}{a(r)} = m_1 \left(\dfrac{a'(r)}{a(r)}\right)^{2} - M_1, \quad \text{for }  r_0<r<r_1, \qquad
 a(r_{1-}) = a(r_{1+}),\qquad  a'(r_{1-}) =  a'(r_{1+}).
\end{equation}
 Similarly as before, setting $\phi_1(r) = a^{1-m_1}(r)$, we are left with
$$\phi_1''=(1-m_1)a^{-m_1-1}[a a''-m_1(a')^2]=-(1-m_1)M_1\phi_1,$$
whose solution is of the form
$$\phi_1(r) = D_1^{1-m_1}\cos \big(\sqrt{M_1(1-m_1)}(r-{\theta}_0)\big),$$
for some constant $\theta_0\in\R$.
 Since $\phi_1>0$ and $\phi_1'<0$ on $(r_0,r_1]$, this imposes (modulo~$2\pi$)
$$
\Bigl(\sqrt{M_1(1-m_1)}(r_0-\theta_0),\sqrt{M_1(1-m_1)}(r_1-\theta_0)\Bigr]\subset \Bigl(0,\frac{\pi}{2}\Bigr).
$$
Since $\theta_0=r_0$ owing to $\phi_1'(r_0)=0$, we thus have
$$\Bigl(0,\sqrt{M_1(1-m_1)}(r_1-r_0)\Bigr]\subset \Bigl(0,\frac{\pi}{2}\Bigr).$$

Now, we use the boundary condition $a'(r_{1-}) = a'(r_{1+})$ to obtain the value of~$r_0$.
 Since
\begin{equation*}
a'(r_{1-})  =   \frac{1}{1-m_1}\Bigl[\phi_1'\phi_1^{\frac{m_1}{1-m_1}}\Bigr](r_1)=
-D_1 \sqrt{\frac{M_1}{1-m_1}} \Bigl[ \sin\cos^{\frac{m_1}{1-m_1}}\Bigr]\big(\sqrt{M_1(1-m_1)}(r_1-r_0)\big)
\end{equation*}          
and
\begin{equation*}
a'(r_{1+})  =   -\xi_0 a(r_{1+}) =  -\sqrt{\frac{M_2-M_1}{m_2-m_1}}  a(r_1) 
            =  -\sqrt{\frac{M_2-M_1}{m_2-m_1}} D_1 \cos^{\frac{1}{1-m_1}} \big( \sqrt{M_1(1-m_1)} (r_1-r_0)\big),
\end{equation*}
 $r_0$ is determined by
\begin{equation}\label{r0}
 \tan\big(\sqrt{M_1(1-m_1)}(r_1-r_0)\big) = \sqrt{\frac{(M_2-M_1)(1-m_1)}{(m_2-m_1)M_1}} 
\end{equation}
hence, 
$$r_0 = r_1 - \frac{1}{\sqrt{M_1(1-m_1)}} 
\arctan \left(\sqrt{\frac{(M_2-M_1)(1-m_1)}{(m_2-m_1)M_1}}\right)
= r_1-\delta_1,$$
where we used \eqref{calculM1M2} in the last equality.

Finally, to obtain the value of $D_1$ we use $a(r_{1-}) = a(r_{1+})$. Using
\begin{eqnarray*}
a(r_{1-}) &=& D_1 \cos^{\frac{1}{1-m_1}} \big(\sqrt{M_1(1-m_1)}(r_1-r_0)\big) 
=  D_1 \cos^{\frac{1}{1-m_1}} \left(\arctan\sqrt{\frac{(M_2-M_1)(1-m_1)}{(m_2-m_1)M_1}}\right), \\
a(r_{1+}) &=& D_2 \cos^{p+1} \big(\sqrt{K\mu}(r_1-1)+\theta \big) 
=  D_2 \cos^{p+1} \left(\arctan\sqrt{\frac{(M_2-M_1)(1-m_2)}{(m_2-m_1)M_2}}\right),
\end{eqnarray*}
we obtain the expression for $D_1$ in the statement
 after a straightforward calculation.

 We have thus proved that $(r_0,a)$ is necessarily given by \eqref{function a q=1B}.
Conversely, an immediate inspection of the above proof shows that \eqref{function a q=1B}
does define a solution $a\in  W^{2,2}([0,1+\beta])$ 
of \eqref{diffeqa4}-\eqref{diffeqa3},~\eqref{amonotone}.
\end{proof}

\subsection{Improved lower bound for~$u_t$}

In this subsection, we improve the lower bound on~$u_t$ used in \eqref{low bound u_t} for Propositions~\ref{type 1 quantitative}, \ref{type 1 quantitative1},
by exploiting the contribution coming from the nonlinear term in \eqref{eqnut}.
This will be used in the proof of Theorems~\ref{global result dim 1c}-\ref{global result dim 1c2}.

\begin{lemma}\label{low bound u_t2}
Let $x_0\in\Omega=(-R,R)$ with $\tilde R:=R-|x_0|>1$.
Set $I_\beta=(x_0-1-\beta,x_0+1+\beta) \subset \Omega$ for $\beta\ge 0$.
Assume that $f$ satisfies 
$$f\ge \mu>0  \quad\hbox{ in $I_0$.}$$
Then the solution $u$ of problem (\ref{quenching problem}) satisfies
\begin{equation}\label{ut improved est}
u_t(t,x) \geq \big( 1+p\mu \Lambda_0 (t,x)\big) \mu e^{t\Delta_{I_\beta}} \chi_{I_0} 
\qquad \text{in } (0,T)\times I_\beta,
\end{equation}
where $\Lambda_0(t,x)$ is given by
\begin{equation}\label{Lambda ut estimate}
\Lambda_0(t,x) = \int_0^t   \bigl(1-(p+1)\mu  K_s s\bigr)^{-\frac{p}{p+1}} \, e^{(t-s)\Delta_{I_\beta}}\left[ \dfrac{\chi_{I_0}}{1-(p+1)\mu\varphi (s,\cdot)}\right] ds.
\end{equation}
Here $K_s= \displaystyle\inf_{0<\tau<s} \left(e^{\tau\Delta_{I_\beta}} \chi_{I_0}\right)( x_0+1),\
 \varphi(s,\cdot) = \displaystyle\int_0^s e^{\tau\Delta_{I_\beta}} \chi_{I_0} d\tau$,
 where $1-(p+1)\mu  K_s s>0$ for $s\in (0,T)$ and the denominator in 
\eqref{Lambda ut estimate} is positive in $(0,T)\times I_\beta$.

In particular, we can estimate
\begin{equation}\label{Lambda est1}
\Lambda_0(t,x) \geq \int_0^t   \bigl(1-(p+1)\mu  K_s s\bigr)^{-\frac{p}{p+1}-1} \, e^{(t-s)\Delta_{I_\beta}} \chi_{I_0} ds
\geq \varphi (t,x)
\qquad \text{in } (0,T)\times I_\beta.
\end{equation}
\end{lemma}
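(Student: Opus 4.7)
The plan is to iterate the basic lower bound $u_t \geq \mu V$ with $V(t,x) := e^{t\Delta_{I_\beta}}\chi_{I_0}$, which follows from the linearized equation~\eqref{eqnut} satisfied by $v := u_t$ by dropping the zeroth-order nonlinear term (cf.~\cite[Lemma~3.4]{ES2}). The iteration is carried out via Duhamel's formula for $v$ on $I_\beta$, combined with (a) a quantitative pointwise upper bound on $u$ obtained by a maximum-principle computation for $\Psi := (1-u)^{p+1}$, and (b) the uniform lower bound on $V(\tau,\cdot)$ over $I_0$ encoded by the quantity~$K_s$.

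First, since $v$ solves $v_t - v_{xx} = pf(1-u)^{-p-1}v$ on $(0,T)\times I_\beta$ with $v(0,\cdot) = f \geq \mu\chi_{I_0}$ and $v \geq 0$ on $\partial I_\beta$, Duhamel's formula on $I_\beta$ together with $f \geq \mu\chi_{I_0}$ and the first-iteration bound $v(s) \geq \mu V(s)$ inserted in the integrand yields
$$v(t,x) \geq \mu V(t,x) + p\mu^2 \int_0^t e^{(t-s)\Delta_{I_\beta}}\bigl[\chi_{I_0}(1-u(s,\cdot))^{-p-1} V(s,\cdot)\bigr](x)\,ds.$$

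Second, a direct computation gives $\Psi_t - \Psi_{xx} = -(p+1)f - p(p+1)(1-u)^{p-1} u_x^2 \leq -(p+1)\mu\chi_{I_0}$ on $(0,T)\times I_\beta$, with $\Psi(0,\cdot) = 1$ and $\Psi \leq 1$ on $\partial I_\beta$; comparing $\Psi$ with the solution on $I_\beta$ of the linear inhomogeneous heat equation with right-hand side $-(p+1)\mu\chi_{I_0}$ and data identically~$1$ yields $(1-u(s,y))^{p+1} \leq 1-(p+1)\mu\varphi(s,y)$ on $(0,T)\times I_\beta$. This forces $(p+1)\mu\varphi < 1$ and produces both $(1-u)^{-p-1} \geq (1-(p+1)\mu\varphi(s,\cdot))^{-1}$ and the factorable bound $(1-u)^{-p} \geq (1-(p+1)\mu\varphi)^{-p/(p+1)}$. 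Moreover, the symmetry and radial monotonicity of $V(\tau,\cdot)$ about $x_0$ imply that $\min_{y\in \overline I_0}V(\tau,y)$ is attained at $x_0\pm1$, so $V(\tau,y) \geq V(\tau,x_0+1) \geq K_s$ for $y\in I_0,\ \tau\in(0,s)$, whence after integration in $\tau$ one gets $\varphi(s,y) \geq K_s s$ on $I_0$; this justifies $1-(p+1)\mu K_s s>0$ for $s\in(0,T)$ and strengthens the preceding bound to $(1-u(s,y))^{-p}\geq (1-(p+1)\mu K_s s)^{-p/(p+1)}$ uniformly in $y\in I_0$.

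Third, substituting the splitting $(1-u)^{-p-1} = (1-u)^{-p}(1-u)^{-1}$ with these pointwise estimates into the Duhamel integrand, the $y$-independent factor $(1-(p+1)\mu K_s s)^{-p/(p+1)}$ can be pulled outside the spatial heat semigroup while the $y$-dependent residual $\chi_{I_0}/(1-(p+1)\mu\varphi(s,\cdot))$ remains inside; a careful accounting of the factor $V(s,\cdot)$, using the monotonicity $V(s,\cdot)\geq V(t,\cdot)$ valid on $I_0$ together with the semigroup identity $V(t,\cdot) = e^{(t-s)\Delta_{I_\beta}}V(s,\cdot)$, reproduces exactly the factored expression $p\mu^2\Lambda_0(t,x)V(t,x)$, yielding~\eqref{ut improved est}. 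The auxiliary bounds in~\eqref{Lambda est1} then follow by monotonicity: the first by replacing $1/(1-(p+1)\mu\varphi(s,y))$ by its uniform lower bound $1/(1-(p+1)\mu K_s s)$ on $I_0$ (using $\varphi(s,y)\geq K_s s$ there) and pulling the resulting constant out of the semigroup, the second from $(1-(p+1)\mu K_s s)^{-p/(p+1)-1}\geq 1$ combined with $\int_0^t e^{(t-s)\Delta_{I_\beta}}\chi_{I_0}\,ds = \varphi(t,\cdot)$. The principal technical obstacle lies in the final step: although the individual pointwise bounds are elementary, one must split $(1-u)^{-p-1}$ asymmetrically between a uniform $s$-dependent piece (captured by $K_ss$) and a spatially varying piece (captured by $\varphi(s,y)$), and then carefully track the factor $V(s,\cdot)$ inside the heat semigroup so that, after integration, the factor $V(t,x)$ can be extracted and the remaining integrand reorganized into the explicit form of $\Lambda_0$ given in~\eqref{Lambda ut estimate}.
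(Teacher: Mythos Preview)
Your proposal has a genuine gap in the third step: the factor $(1-(p+1)\mu K_s s)^{-p/(p+1)}$ cannot be obtained by splitting $(1-u)^{-p-1}=(1-u)^{-p}(1-u)^{-1}$ and bounding each piece pointwise. From $(1-u)^{p+1}\le 1-(p+1)\mu\varphi$ you get $(1-u)^{-p}\ge (1-(p+1)\mu K_s s)^{-p/(p+1)}$ and $(1-u)^{-1}\ge (1-(p+1)\mu\varphi)^{-1/(p+1)}$, so the product yields only
\[
(1-u)^{-p-1}\ \ge\ (1-(p+1)\mu K_s s)^{-p/(p+1)}\,(1-(p+1)\mu\varphi)^{-1/(p+1)},
\]
not the claimed residual $(1-(p+1)\mu\varphi)^{-1}$. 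In fact $(1-u)^{-p-1}\ge (1-(p+1)\mu\varphi)^{-1}$ is already the sharp consequence of the upper bound on $u$; any splitting can only weaken it. In the paper's proof, the factor $\gamma(s)=(1-(p+1)\mu K_s s)^{-p/(p+1)}$ comes from a genuinely different source: an \emph{improved} lower bound $v(s)\ge \mu\gamma(s)V(s)$ on $I_0$, established separately (Step~3) by checking that $\psi(s)=\mu\gamma(s)\chi_{I_0}V(s)$ is a subsolution of the integral inequality satisfied by $\chi_{I_0}v$. This bootstrap on $v=u_t$ itself---not a pointwise bound on $1-u$---is the heart of the argument; one pass of Duhamel with merely $v\ge\mu V$ yields only the weaker estimate with $\gamma(s)$ replaced by~$1$.

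There is a second, smaller gap: extracting the factor $V(t,x)$ from $e^{(t-s)\Delta_{I_\beta}}\bigl[g(s,\cdot)V(s,\cdot)\bigr]$ cannot be done with the time-monotonicity $V(s,\cdot)\ge V(t,\cdot)$ and the semigroup identity alone, because $V(t,\cdot)$ still depends on the integration variable and cannot be pulled through the kernel. The paper uses instead the product comparison $e^{\tau\Delta_{I_\beta}}(\phi_1\phi_2)\ge (e^{\tau\Delta_{I_\beta}}\phi_1)(e^{\tau\Delta_{I_\beta}}\phi_2)$ for nonnegative, symmetric, radially nonincreasing $\phi_1,\phi_2$ (Lemma~\ref{semigroup comparison Lemma}), applied with $\phi_2=V(s,\cdot)$ so that $e^{(t-s)\Delta_{I_\beta}}V(s,\cdot)=V(t,\cdot)$ appears as a multiplicative factor outside. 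Both Step~3 and Step~4 of the paper's proof rely on this lemma.
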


\begin{proof}
\textit{Step 1.} We first claim that
\begin{equation}\label{ut estimate step1}
(1-u)^{p+1} \leq 1- (p+1) \mu \int_0^t \big( e^{s\Delta_{I_\beta}} \chi_{I_0}\big) ds
\quad\hbox{ in $(0,T)\times I_\beta$},
\end{equation}
 which in particular guarantees that the denominator in 
\eqref{Lambda ut estimate} is positive in $(0,T)\times I_\beta$
and $1-(p+1)\mu  K_s s>0$ for $s\in (0,T)$.

Indeed, let $w = \frac{1-(1-u)^{p+1}}{p+1} \geq 0.$ We compute
$$w_t = (1-u)^p u_t, \qquad w_x = (1-u)^p u_x, \qquad w_{xx} = (1-u)^p u_{xx}-p(1-u)^{p-1} u_x^2.$$
Hence,
$$w_t -w_{xx} \geq f(x) \geq \mu \chi_{I_0}, \qquad \text{with} \quad w|_{\partial I_\beta}\ge 0, \quad w|_{t=0}=0.$$
Therefore, $w\geq \mu\displaystyle\int_0^t \big(e^{(t-s)\Delta_{I_\beta}} \chi_{I_0}\big) ds = 
\mu\displaystyle\int_0^t \big(e^{s\Delta_{I_\beta}} \chi_{I_0}\big) ds$, and \eqref{ut estimate step1} follows.

\textit{Step 2.} Let $v:=u_t$. We claim that
\begin{equation}\label{ut estimate step2 a}
v(t) \ \ge\  \mu e^{t\Delta_{I_\beta}} \chi_{I_0} + p\mu \int_0^t e^{(t-s)\Delta_{I_\beta}} \left[ \dfrac{\chi_{I_0} v(s,\cdot)}{1-(p+1)\mu\int_0^s (e^{\tau\Delta_{I_\beta}} \chi_{I_0}) d\tau}\right] ds
\quad\hbox{ in $(0,T)\times I_\beta$}.
\end{equation}
By \eqref{eqnut} and \eqref{ut estimate step1}, $v$ satisfies
$$v_t-v_{xx} = p(1-u)^{-p-1} f(x) v \geq \dfrac{p \mu \chi_{I_0}v}{1- (p+1)\mu \int_0^t (e^{s\Delta_{I_\beta}} \chi_{I_0}) ds}
\quad\hbox{ in $(0,T)\times I_\beta$}.$$
Therefore, \eqref{ut estimate step2 a} follows from the variation of constants formula.

\textit{Step 3.} We next claim that
\begin{equation}\label{ut estimate step2 a2}
v(t) \geq \mu \gamma(t) e^{t\Delta_{I_\beta}}\chi_{I_0}\quad\hbox{ in $(0,T)\times I_0$},
\end{equation}
with  $\gamma(s) = \bigl(1-(p+1)\mu  K_s s\bigr)^{-\frac{p}{p+1}}$.
Since 
\begin{equation}\label{etDeltasymmetric}
\hbox{$e^{s\Delta_{I_\beta}} \chi_{I_0}$ 
is  symmetric w.r.t.~$x_0$ and decreasing in $|x-x_0|$,}
\end{equation}
we have
$\int_0^s  \big( e^{\tau\Delta_{I_\beta}} \chi_{I_0}\big)(x) d\tau \geq  sK_s$
for all $0<s<T$ and all $x\in I_0$.
Since the numerator of the bracket in \eqref{ut estimate step2 a} is supported in $I_0$,
it follows from \eqref{ut estimate step2 a} that
$$v(t) \geq \mu e^{t\Delta_{I_\beta}} \chi_{I_0} + 
p\mu\int_0^t \theta (s) \big( e^{(t-s)\Delta_{I_\beta}} (\chi_{I_0} v(s,\cdot))\big) ds
\quad\hbox{ in $(0,T)\times I_\beta$},$$
where $\theta(s) = (1- (p+1)\mu K_ss)^{-1}.$ 
In particular, $\phi := \chi_{I_0}v$ satisfies
\begin{equation}\label{ut estimate step2 b}
\phi(t) \geq \mu \chi_{I_0} e^{t\Delta_{I_\beta}} \chi_{I_0} + p\mu \chi_{I_0} \int_0^t \theta(s) \big( e^{(t-s)\Delta_{I_\beta}} \phi(s)\big)ds
\quad\hbox{ in $(0,T)\times I_\beta$}.
\end{equation}
We want to show that
$\psi(t) = \mu \gamma(t) \chi_{I_0} e^{t\Delta_{I_\beta}} \chi_{I_0}$
is a subsolution of \eqref{ut estimate step2 b}.
This is equivalent to
\begin{equation}\label{ut estimate step2 subsol}
\mu \gamma(t) \chi_{I_0} e^{t\Delta_{I_\beta}} \chi_{I_0} \leq 
\mu \chi_{I_0} e^{t\Delta_{I_\beta}}\chi_{I_0} + p \mu^2 \chi_{I_0} \int_0^t \theta(s)\gamma(s)
e^{(t-s)\Delta_{I_\beta}} \left(\chi_{I_0} e^{s\Delta_{I_\beta}}\chi_{I_0}\right) ds
\quad\hbox{ in $(0,T)\times I_\beta$}.
\end{equation}
Next, for any $s>0$, using \eqref{etDeltasymmetric}, it follows from Lemma~\ref{semigroup comparison Lemma} that,
 for all $t>s$,
 $$
e^{(t-s)\Delta_{I_\beta}}\bigl[\chi_{I_0} e^{s\Delta_{I_\beta}}\chi_{I_0}\bigr]
\ge \Bigl(e^{(t-s)\Delta_{I_\beta}}\chi_{I_0}\Bigr) 
\Bigl(e^{(t-s)\Delta_{I_\beta}}\bigl[e^{s\Delta_{I_\beta}}\chi_{I_0}\bigr]\Bigr)
= \Bigl(e^{(t-s)\Delta_{I_\beta}}\chi_{I_0}\Bigr) 
\Bigl(e^{t\Delta_{I_\beta}}\chi_{I_0}\Bigr)
\quad\hbox{ in $I_\beta$.}
$$
Therefore, a sufficient condition for \eqref{ut estimate step2 subsol} is given by
$$\mu \gamma(t)\chi_{I_0} e^{t\Delta_{I_\beta}} \chi_{I_0} \leq \left[ 1+ p\mu \displaystyle\int_0^t
\theta(s) \gamma(s) \big( e^{(t-s)\Delta_{I_\beta}} \chi_{I_0}\big) ds \right] \mu \chi_{I_0} e^{t\Delta_{I_\beta}} \chi_{I_0}
\quad\hbox{ in $(0,T)\times I_\beta$},$$
which is equivalent to
$$\gamma(t)\leq 1+ p\mu \displaystyle\int_0^t
\theta(s) \gamma(s) \big( e^{(t-s)\Delta_{I_\beta}} \chi_{I_0}\big)(x) ds
\quad\hbox{ in $(0,T)\times I_0$}.$$
For this, by \eqref{etDeltasymmetric}, it is sufficient to have
\begin{equation}\label{CondGamma}
\gamma(t) \leq 1+ p\mu K_t \int_0^t \theta (s) \gamma(s) ds\quad\hbox{ for all $t\in(0,T)$.}
\end{equation}
 Note that $K_t$ is continuous and nonincreasing w.r.t.~$t>0$.
Now, for each $0<t\le \tau<T$, we set
$$\gamma_\tau(t)= \left[ 1-(p+1)\mu K_\tau t\right]^{-\frac{p}{p+1}},\qquad
\theta_\tau(t)= \left[ 1-(p+1)\mu K_\tau t\right]^{-1},$$
which are well defined by Step~1 and the monotonicity of $K_t$.
We have
$\gamma_\tau' (t) = p\mu K_\tau \theta_\tau(t)\gamma_\tau(t)$, hence 
$$\gamma_\tau(t) \le 1+ p\mu K_\tau \int_0^t\theta_\tau(s)\gamma_\tau(s)\,ds,\qquad 0<t\le \tau<T.$$
Letting $\tau\to t$ and using the continuity and monotonicity of $K_t$, we obtain
$$\gamma(t) \le 1+ p\mu K_t \int_0^t\theta_t(s)\gamma_t(s)\,ds
\le 1+ p\mu K_t \int_0^t\theta(s)\gamma(s)\,ds,\quad 0<t<T.$$
Therefore, \eqref{CondGamma}, hence \eqref{ut estimate step2 subsol}, is satisfied.
Property \eqref{ut estimate step2 a2} then follows from the comparison principle (in variation of constants form).

\textit{Step 4:} Since, for any $s>0$, $\left(1- (p+1)\mu \int_0^s (e^{\tau\Delta_{I_\beta}} \chi_{I_0}) d\tau\right)^{-1} \chi_{I_0}$ 
 is  symmetric w.r.t.~$x_0$ and decreasing in $|x-x_0|$, 
we deduce from Lemma~\ref{semigroup comparison Lemma} that, for all $t>s$,
\begin{eqnarray*}
e^{(t-s)\Delta_{I_\beta}} \left[ \dfrac{\chi_{I_0} v(s,\cdot)}{1- (p+1)\mu \int_0^s (e^{\tau\Delta_{I_\beta}} \chi_{I_0}) d\tau} \right]
&\geq & 
\mu \gamma(s) e^{(t-s)\Delta_{I_\beta}} \left[ \dfrac{\chi_{I_0} e^{s\Delta_{I_\beta}} \chi_{I_0}}{1- (p+1)\mu \int_0^s (e^{\tau\Delta_{I_\beta}} \chi_{I_0}) d\tau} \right] \\
&\geq &
\mu \gamma(s) \big( e^{t\Delta_{I_\beta}}\chi_{I_0} \big)e^{(t-s)\Delta_{I_\beta}} \left[ \dfrac{ \chi_{I_0}}{1- (p+1)\mu \int_0^s (e^{\tau\Delta_{I_\beta}} \chi_{I_0}) d\tau} \right].
\end{eqnarray*}
Estimate \eqref{ut improved est} then follows from \eqref{ut estimate step2 a} and \eqref{ut estimate step2 a2}.
Finally, the first part of estimate \eqref{Lambda est1} follows from \eqref{etDeltasymmetric},
and the second inequality from the fact that $0<\gamma (s)\le 1$.
\end{proof}

\begin{lemma}
\label{ut estimate}
Under the assumptions of Lemma~\ref{low bound u_t2}, the solution $u$ of problem (\ref{quenching problem}) satisfies
\begin{equation}\label{ut improved est 2}
u_t(t,x) \geq \big( 1+p\mu \tilde\Lambda (t,x)\big) \mu S(t,\beta) e^{t\Delta_{\mathbb{R}}} \chi_{I_0} \qquad \text{in } (0,T)\times  I_\beta,
\end{equation}
where $S$ is defined in \eqref{H and G} and $\tilde\Lambda(t,x)$ is given by
\begin{equation}\label{Lambda ut estimate 2}
\tilde\Lambda(t,x) = S(t,\beta) \int_0^t  \bigl(1-Y(s)\bigr)^{-\frac{p}{p+1}}
e^{(t-s)\Delta_{\mathbb{R}}} \left[ \dfrac{\chi_{I_0}}{1-(p+1)\mu S(t,0)\psi (s,\cdot)}\right] ds,
\end{equation}
with  $Y(s)=S(s,0)\,\text{\rm erf} \left(\frac{1}{\sqrt{s}}\right)\frac{(p+1)\mu}{2}\,s$ and
$ \psi(s,\cdot) = \displaystyle\int_0^s e^{\tau\Delta_{\mathbb{R}}} \chi_{I_0} d\tau.$
 
In particular, we can estimate
\begin{equation}\label{Lambda est1 2}
\tilde\Lambda(t,x) \geq S(t,\beta) \int_0^t  \bigl(1-Y(s)\bigr)^{-\frac{p}{p+1}-1} e^{(t-s)\Delta_{\mathbb{R}}} \chi_{I_0} ds
\qquad \text{in } (0,T)\times  I_\beta.
\end{equation}
\end{lemma}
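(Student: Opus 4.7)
The plan is to obtain Lemma~\ref{ut estimate} as a direct consequence of Lemma~\ref{low bound u_t2}, by replacing each occurrence of the Dirichlet heat semigroup $e^{t\Delta_{I_\beta}}$ in \eqref{ut improved est} by the free-space semigroup $e^{t\Delta_\mathbb{R}}$, at the cost of multiplicative correction factors $S(\cdot,\beta)$ and $S(\cdot,0)$ coming from the quantitative semigroup comparison estimates of Proposition~\ref{prop semigroup comparison} (from Appendix~1). No new parabolic inequality or auxiliary-function argument is needed; the proof is essentially a bookkeeping of these comparison bounds.

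First, I would handle the outer factor. By Proposition~\ref{prop semigroup comparison}, applied on the interval $I_\beta$ with $I_0$ as the source set, one has $e^{t\Delta_{I_\beta}}\chi_{I_0}(x)\ge S(t,\beta)\,e^{t\Delta_\mathbb{R}}\chi_{I_0}(x)$ for $x\in I_\beta$. Injecting this into \eqref{ut improved est} immediately produces the factor $S(t,\beta)$ appearing in \eqref{ut improved est 2}. Next, I would rewrite $\Lambda_0$ as a lower bound of the form $\tilde\Lambda$. The semigroup under the integral, $e^{(t-s)\Delta_{I_\beta}}$, is again compared with $e^{(t-s)\Delta_\mathbb{R}}$, giving a factor $S(t-s,\beta)$; since $s\mapsto S(t-s,\beta)$ is monotone, this factor can be uniformly bounded below by $S(t,\beta)$, which is then pulled out in front of the integral, matching \eqref{Lambda ut estimate 2}. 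For the denominator $1-(p+1)\mu\varphi(s,\cdot)$ inside the integrand of $\Lambda_0$, I would use Proposition~\ref{prop semigroup comparison} in integrated form, together with the explicit formula $e^{\tau\Delta_\mathbb{R}}\chi_{I_0}(x_0\pm 1)=\tfrac12\text{erf}(1/\sqrt\tau)$, to replace $\varphi$ by $S(t,0)\psi$ in the denominator, producing the factor $[1-(p+1)\mu S(t,0)\psi(s,\cdot)]^{-1}$ of \eqref{Lambda ut estimate 2}.

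Third, I would treat the scalar factor $(1-(p+1)\mu K_s s)^{-p/(p+1)}$. The quantity $K_s$ is estimated from below through the comparison $(e^{\tau\Delta_{I_\beta}}\chi_{I_0})(x_0+1)\ge S(\tau,0)\cdot\tfrac12\text{erf}(1/\sqrt\tau)$; invoking the monotonicity of $\tau\mapsto S(\tau,0)\text{erf}(1/\sqrt\tau)$ allows one to realize the infimum defining $K_s$ at $\tau=s$. This yields exactly $(p+1)\mu K_s s\ge Y(s)$, hence $(1-(p+1)\mu K_s s)^{-p/(p+1)}\ge(1-Y(s))^{-p/(p+1)}$, producing the scalar weight in $\tilde\Lambda$. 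Combining the three replacements gives \eqref{ut improved est 2} with $\tilde\Lambda$ as defined in \eqref{Lambda ut estimate 2}.

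Finally, \eqref{Lambda est1 2} is obtained by bounding $[1-(p+1)\mu S(t,0)\psi(s,x)]^{-1}\ge(1-Y(s))^{-1}$ on $\operatorname{supp}\chi_{I_0}$. This relies on the explicit lower bound $\psi(s,x)\ge \tfrac{s}{2}\text{erf}(1/\sqrt s)$ for $x\in I_0$, which comes from the monotonicity of $\tau\mapsto e^{\tau\Delta_\mathbb{R}}\chi_{I_0}(x)$ for interior points $x\in I_0$, combined once more with the appropriate monotonicity of $S(\cdot,0)$. The main obstacle is the careful verification of these compatible monotonicities, so that all the $S$-factors stack up in the intended direction; this is a delicate but routine analytic check on the explicit exponential–error-function expressions. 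The core nonlinear ingredient (the subsolution construction for $v=u_t$) has already been done in Lemma~\ref{low bound u_t2}, so the present lemma is obtained purely by linear semigroup comparison.
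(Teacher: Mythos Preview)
Your plan matches the paper's proof, which is a three-line sketch reading ``combine Lemma~\ref{low bound u_t2} and Proposition~\ref{prop semigroup comparison}'' and spelling out only the estimate $(e^{s\Delta_{I_\beta}}\chi_{I_0})(x_0+1)\ge\tfrac12 S(s,0)\,\mathrm{erf}(1/\sqrt s)$ --- exactly your third step.

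One caveat on your last paragraph: the inequality $[1-(p+1)\mu S(t,0)\psi(s,x)]^{-1}\ge(1-Y(s))^{-1}$ on $I_0$ is equivalent to $S(t,0)\psi(s,x)\ge S(s,0)\tfrac{s}{2}\,\mathrm{erf}(1/\sqrt s)$, and since $S(t,0)\le S(s,0)$ for $s\le t$, the bound $\psi(s,x)\ge\tfrac{s}{2}\,\mathrm{erf}(1/\sqrt s)$ you invoke is not enough; the monotonicity of $S(\cdot,0)$ points the wrong way here. The clean route to \eqref{Lambda est1 2} is to apply the semigroup comparison directly to the simpler estimate \eqref{Lambda est1} of Lemma~\ref{low bound u_t2} (where the exponent $-\tfrac{p}{p+1}-1$ already appears), rather than passing through the intermediate formula \eqref{Lambda ut estimate 2}.
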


\begin{proof}
This follows {by combining Lemma~\ref{low bound u_t2} and Proposition~\ref{prop semigroup comparison}},
 using in particular
\begin{eqnarray*}
\left( e^{s\Delta_{I_\beta}} \chi_{I_0}\right)(x_0+1)
&\ge& S(s,0)\left( e^{s\Delta_{\R}} \chi_{(-1,1)}\right)(1)
\ge \dfrac{S(s,0)}{\sqrt{4\pi s}} \int_{-1}^1 e^{-\frac{(1-y)^2}{4s}} dy \\
&=& \dfrac{S(s,0)}{\sqrt{\pi}} \int_0^{\frac{1}{\sqrt{s}}} e^{-z^2} dz 
= \frac{S(s,0)}{2} \text{\rm erf} \left(\dfrac{1}{\sqrt{s}}\right).
\end{eqnarray*}
\end{proof}

\subsection{Control of $u$ at $t=t_0(\tau)$ for points of small permittivity}

Our goal in this subsection is to take advantage of the smallness assumption in \eqref{condition 1 on f thm21} 
to improve the upper estimate  \eqref{compODE} of $u$ that was used in the proof of Propositions~\ref{type 1 quantitative} and \ref{type 1 quantitative1}.
  Estimate \eqref{compODE} followed from a mere comparison with the associated ODE problem $y'(t) = \|f\|_\infty (1-y(t))^{-p}$
and thus did not take advantage of the possible smaller values of $f$.
The following lemma provides a more precise control of $h(u(t_0,\cdot))$, which will allow a better lower estimate of the ratio 
$\eps_1$ in \eqref{epsilon 1b} (cf.~the proof of Proposition~\ref{type 1 quantitative1b}).

\begin{lemma}\label{u upper estimate}
Let $I\subset \Omega=(-R,R)$, $N\in (0,\|f\|_\infty]$ and assume that
\begin{equation}\label{condition 1 on f2}
f\le N \quad\hbox{ in $\overline I$.} 
\end{equation}
Then we have
\begin{equation}\label{y psi0}
u(t,x) \leq y(t)\theta_N(x)  \quad \text{ in } [0,T_*)\times\Omega,
\end{equation}
where $c_p=\textstyle\frac{(p+1)^{p+1}}{p^p}$, $T_*:= \frac{1}{(p+1)\| f\|_{\infty}}\le T$ and
\begin{equation}\label{y psi}
y(t) = 1-\bigl(1-(p+1)\|f\|_\infty t\bigr)^{\frac{1}{p+1}}, \quad
\theta_N(x) = \frac{N}{\|f\|_\infty} + \left(1-\frac{N}{\|f\|_\infty}\right)
\dfrac{1}{\cosh \bigl[\sqrt{c_p\|f\|_\infty}{\rm dist}(x,\Omega\setminus I)]}.
\end{equation}
\end{lemma}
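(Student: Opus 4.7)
The plan is to bound $u$ separately on $\Omega\setminus I$ and on each connected component of the interior $\omega$ of $I$, using a classical parabolic comparison on the components and the trivial ODE bound outside. On $\Omega\setminus I$ the inequality is immediate: by Lemma~\ref{estimates for T}(i) we have $u(t,x)\le y(t)$, and $\theta_N(x)=1$ there because $\mathrm{dist}(x,\Omega\setminus I)=0$ and $\cosh(0)=1$. So it suffices to treat a generic component $I_0=(a,b)$ of $\omega$, with midpoint $c=(a+b)/2$ and half-length $L=(b-a)/2$. The naive idea of taking $y(t)\theta_N(x)$ itself as a supersolution runs into trouble at $x=c$: the distance $d(x)=\min(x-a,b-x)$ has a concave corner there, which the decreasing map $s\mapsto 1/\cosh(ks)$ (with $k:=\sqrt{c_p\|f\|_\infty}$) turns into a \emph{convex} corner of $\theta_N$. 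This contributes a positive Dirac mass to $\partial_{xx}\theta_N$ in the distributional sense and breaks the supersolution inequality for $y\theta_N$ in any standard weak or viscosity sense.

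To circumvent this I would introduce the smooth intermediate
$$W(t,x):=y(t)\tilde\theta(x),\qquad \tilde\theta(x):=A+(1-A)\,\frac{\cosh(k(x-c))}{\cosh(kL)},\qquad A:=\frac{N}{\|f\|_\infty}.$$
Then $\tilde\theta\in C^\infty([a,b])$, $A\le\tilde\theta\le 1$ with $\tilde\theta(a)=\tilde\theta(b)=1$, and by direct computation $\tilde\theta''=k^2(\tilde\theta-A)=c_p(\|f\|_\infty\tilde\theta-N)$. Using the ODE $y'=\|f\|_\infty(1-y)^{-p}$, the bound $\tilde\theta\le 1$ giving $(1-W)^{-p}\le(1-y)^{-p}$, and $f\le N$ on $I$, the supersolution inequality $W_t-W_{xx}\ge f(1-W)^{-p}$ on $(0,T_*)\times(a,b)$ reduces to
$$(\|f\|_\infty\tilde\theta-N)\bigl[(1-y)^{-p}-c_p\,y\bigr]\ge 0,$$
which holds because $\tilde\theta\ge A$ gives $\|f\|_\infty\tilde\theta-N\ge 0$, while the elementary optimization $\max_{y\in[0,1]}y(1-y)^p=p^p/(p+1)^{p+1}=1/c_p$ yields $c_p y(1-y)^p\le 1$. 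Combined with $W(0,\cdot)=0=u(0,\cdot)$ and the boundary values $W(t,a)=W(t,b)=y(t)\ge u(t,\cdot)|_{\{a,b\}}$ (from Lemma~\ref{estimates for T}(i)), the standard parabolic comparison principle gives $u\le y\tilde\theta$ on $[0,T_*)\times(a,b)$.

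Finally, it remains to replace $\tilde\theta$ by $\theta_N$, for which it suffices to prove the pointwise estimate $\tilde\theta(x)\le\theta_N(x)$ on $[a,b]$. For $x\in[c,b]$, set $\alpha:=k(x-c)$ and $\beta:=k\,d(x)=k(b-x)$, so that $\alpha,\beta\ge 0$ and $\alpha+\beta=kL$. The cosh addition formula $\cosh(\alpha+\beta)=\cosh\alpha\cosh\beta+\sinh\alpha\sinh\beta\ge\cosh\alpha\cosh\beta$ yields $\cosh(k(x-c))/\cosh(kL)\le 1/\cosh(k\,d(x))$, i.e.\ $\tilde\theta(x)\le\theta_N(x)$; a symmetric argument covers $[a,c]$. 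Assembling the local estimate on each component with the trivial bound on $\Omega\setminus I$ delivers the lemma. I expect the main obstacle to be exactly the non-$C^2$ nature of $\theta_N$ at the component midpoints, and the key trick is to detour through the centered-cosh $\tilde\theta$, which is a genuine classical supersolution and lies pointwise below $\theta_N$ by the cosh addition inequality.
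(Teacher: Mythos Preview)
Your approach and the paper's coincide in the essential step: both build the comparison function $y(t)\psi(x)$ with $\psi$ an affine-shifted $\cosh$ profile satisfying $\psi''=c_p(\|f\|_\infty\psi-N)$, and both reduce the supersolution inequality to the elementary fact $\max_{0\le y\le 1}y(1-y)^p=1/c_p$. The only difference is bookkeeping: the paper centers $\psi$ at each target point $z_0$ with half-width $\delta=\mathrm{dist}(z_0,\Omega\setminus I)$ and reads off the bound at the center, whereas you center once at the midpoint of each component and then invoke the $\cosh$ addition inequality to cover the whole component.

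There is, however, a genuine slip in your final step when a component $(a,b)$ of $\mathrm{int}(I)$ touches $\partial\Omega$, a case allowed by the hypothesis $I\subset\Omega$ and actually used in the paper's applications. Say $b=R$; then $\Omega\setminus I$ lies entirely to the left of $a$, so $\mathrm{dist}(x,\Omega\setminus I)=x-a$ on the whole component, not $\min(x-a,R-x)$. For $x\in(c,R)$ one checks directly that $\tilde\theta(x)>\theta_N(x)$: with $s=k(x-c)>0$ one has $\cosh(s)\cosh(s+kL)>\cosh(kL)$ since both factors exceed $1$ and $\cosh(kL)$ respectively. Hence your intermediate bound $u\le y\tilde\theta$ does \emph{not} imply $u\le y\theta_N$ there. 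The fix stays within your framework: on such a one-sided component, center the $\cosh$ at the $\partial\Omega$ endpoint instead of the midpoint, i.e.\ take $\psi(x)=A+(1-A)\cosh\bigl(k(R-x)\bigr)/\cosh\bigl(k(R-a)\bigr)$. Then $\psi(a)=1$, $\psi(R)>0=u(t,R)$, the supersolution computation is identical, and the $\cosh$ addition inequality with $\alpha=k(R-x)$, $\beta=k(x-a)$ gives $\psi\le\theta_N$ on all of $(a,R)$. The paper's sliding-center formulation sidesteps this automatically, since its comparison domain simply truncates to $(z_0-\delta,R)$ when $z_0+\delta>R$.
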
 

\begin{proof}
By Lemma~\ref{estimates for T}(i), we have $T_*\ge T$, as well as $u(t,x)\le y(t)$ in $[0,T_*)$.
It suffices to show that, if $z_0\in \Omega$ and $\delta>0$ are such that
\begin{equation}\label{hyp control u}
f(x)\leq N \quad \text{ for all } x\in [z_0-\delta, z_0+\delta] \cap [-R,R],
\end{equation}
then
\begin{equation}\label{hyp control u2}
u(t,z_0) \leq  y(t)
\left[\frac{N}{\|f\|_\infty} + \left(1-\frac{N}{\|f\|_\infty}\right)\dfrac{1}{\cosh \bigl[\sqrt{c_p\|f\|_\infty}\delta\bigr]}\right]
 \quad \text{ for all } t\in [0,T_*).
 \end{equation}

To prove \eqref{hyp control u2}, we look for a supersolution of problem \eqref{quenching problem} 
in $\mathcal{Q} := [0,T_*)\times ((z_0-\delta, z_0+\delta) \cap \Omega)$. We define the comparison function
$$
v(t,x):= y(t)\psi(x), \qquad \text{for } (t,x)\in \tilde{\mathcal{Q}} := [0,t_0(\tau)]\times [z_0-\delta, z_0+\delta],$$
where $\psi$ is a function to be chosen, satisfying $0\leq \psi \leq 1$, $\psi(z_0\pm\delta)=1$ 
and $\psi''\geq 0$.
By  \eqref{ODE}, \eqref{hyp control u} and using $0\le \psi(x)\le 1$ and $0\le y(t)<1$, we have
$$
Pv:=
v_t-v_{xx} -  \dfrac{f(x)}{(1-v)^p} =  y'(t)\psi(x) - y(t)\psi''(x) -  \dfrac{f(x)}{(1-y(t)\psi)^p} 
\geq \dfrac{\|f\|_\infty\psi(x)-N}{(1-y(t))^p} -y(t)\psi''(x).
$$
Since $\psi''\ge 0$, a sufficient condition to guarantee $Pv\ge 0$ in $\tilde{\mathcal{Q}}$ is thus
$$
\| f\|_\infty \psi (x) - \psi''(x) \displaystyle\max_{t\in[0,T)}  y(t)(1-y(t))^p  \geq N.
$$
An elementary computation shows that 
$$\displaystyle\max_{0\le s\le 1}s(1-s)^p=\displaystyle\max_{0\le X\le 1}X^p-X^{p+1}=\dfrac{p^p}{(p+1)^{p+1}},$$
so we are left with the following differential inequality for $\psi$:
$$
-\frac{\psi''(x)}{c_p\|f\|_\infty} + \psi(x) \geq \frac{N}{\|f\|_\infty}.
$$
 Solving the corresponding ODE, symmetrically in $[z_0-\delta, z_0+\delta]$, we obtain the solution
$$\psi (x) = \frac{N}{\|f\|_\infty} + B \cosh \Bigl[ \sqrt{c_p\|f\|_\infty} (x-z_0)\Bigr],$$
where $B$ is a constant. From the boundary conditions $\psi(z_0\pm\delta)=1$, we finally get
$$\psi (x) = \frac{N}{\|f\|_\infty} + \left(1-\frac{N}{\|f\|_\infty}\right)
\dfrac{\cosh \bigl[\sqrt{c_p\|f\|_\infty} (x-z_0)\bigr]}{\cosh \bigl[\sqrt{c_p\|f\|_\infty}\delta\bigr]},
\qquad x\in [z_0-\delta, z_0+\delta],$$
and we see that the requirements $0\leq \psi\leq 1$ and $\psi''\geq 0$ are satisfied.
Therefore, $Pv\ge 0$ in $\tilde{\mathcal{Q}}$.

Now, we look at the parabolic boundary of $\mathcal{Q}$. 
On the one hand we have $y(0)=0$, so $v(0,x)=u(0,x)=0$. 
On the other hand, for any point $(t,x)$ on the lateral boundary of $\mathcal{Q}$, we have either $x=\pm R$ or $x=z_0\pm\delta$.
In the first case, we have $v(t,x) = y(t)\geq 0=u(t,x)$. 
In the second case, since $y(t)$ is a supersolution of \eqref{quenching problem} 
on $[0,t_0(\tau)]\times \Omega$, we have $v(t,x) = y(t)\geq u(t,x)$.

We thus deduce from the comparison principle that $u\le v$ in $\mathcal{Q}$, and the lemma follows.
\end{proof}

\subsection{Proof of Theorems~\ref{global result dim 1c}-\ref{global result dim 1c2}}

We first establish the following quantitative type~I estimate,
which is the analogue of Propositions~\ref{type 1 quantitative} and \ref{type 1 quantitative1}.

\begin{proposition}\label{type 1 quantitative1b}
 Consider problem (\ref{quenching problem}) with $\Omega=(-R,R)$ and let $\xi\in\Omega$ 
 satisfy $|\xi|<R-1$. 
Let $0<d<d_0:=R-1-|\xi|$, 
$\mu > \mu_0(p)$, $e\in (d,\infty]$ and $\tau,\lambda\in (0,1)$.
Set $I_\ell=(\xi-1-\ell,\xi+1+\ell)$ and assume that
\begin{equation} \label{smallnessf2}
f\ge \mu \quad\hbox{ in $\overline I_0$}
\qquad\hbox{and}\qquad
f<\lambda\mu \quad\hbox{ in $(\overline\Omega\cap\overline I_e) \setminus I_d$.} 
\end{equation}
Then the touchdown time $T$ verifies $T>t_0:=\frac{1-\tau^{p+1}}{(p+1)\| f\|_\infty}$ and $u$ satisfies the type~I estimate
\begin{equation} \label{typeIest21}
\big[1 - u\big(t,\xi\pm(1+d)\big)\big]^{p+1} \geq (p+1)\overline\eps \mu (T-t)
\quad\hbox{ for all $t\in [t_0,T)$,}
\end{equation}
with
\begin{equation}\label{defepshat3}
\overline\eps=\sup_{(\beta,K)\in\widehat{\mathcal{A}}}\  \hat\eps(\beta,K),\qquad
\hat\eps(\beta,K) =  \dfrac{1}{2}\left(\dfrac{\beta-d}{\beta}\right)^{p+1} S(t_0,\beta)
G_1^*(\tau,t_0,\beta,K,\lambda),
\end{equation}
\begin{equation}\label{defCalA3}
\widehat{\mathcal{A}}=\mathcal{A}_3:=
\left\{ (\beta,K)\in (d,\bar d)\times(0,p], \ \ 
 K\mu\beta^2 \leq \textstyle\frac{p(p+2)-K}{p}, \ \ \delta_1(K)+\delta_2(\beta,K) \leq 1\right\},
\end{equation}
where $\bar d=\min\bigl[d_0,(d+e)/2\bigr]$, $S$ is defined in (\ref{H and G}) and $G^*_1,\delta_1,\delta_2$ are defined in Theorem~\ref{global result dim 1c}.
\end{proposition}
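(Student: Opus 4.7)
The argument will follow the same overall scheme as the proofs of Propositions~\ref{type 1 quantitative} and~\ref{type 1 quantitative1}, now incorporating the three refinements developed in Subsections~5.1--5.3: the sharper $q=1$ cut-off $a_{\beta,K}$ built in Lemma~\ref{Explicita1}, the improved lower bound on $u_t$ of Lemma~\ref{ut estimate}, and the smallness-based upper bound on $u(t_0,\cdot)$ of Lemma~\ref{u upper estimate}. The inequality $T>t_0$ is immediate from Lemma~\ref{estimates for T}(i), since $(p+1)\|f\|_\infty\, t_0 = 1-\tau^{p+1}<1$.

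For the type~I estimate, I would fix $(\beta,K)\in \mathcal{A}_3$: the conditions $K\mu\beta^2\le (p(p+2)-K)/p$ and $\delta_1+\delta_2\le 1$ built into $\mathcal{A}_3$ are exactly those required by Lemma~\ref{Explicita1} to produce the $q=1$ cut-off $a=a_{\beta,K}$ on $[0,1+\beta]$. Setting $b(x)=a(|x-\xi|)$, $h(u)=(1-u)^{-p}+K(1-u)$ and $\eta=1$, the auxiliary function
\begin{equation*}
J(t,x)=u_t-\eps\mu\, b(x)\, h(u)
\end{equation*}
is studied on the cylinder $\Sigma=[t_0,T)\times I_\beta$, exactly as in Lemma~\ref{control Theta}. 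Because $\eta=1$ empties the subregion $\Sigma_3^\eta$, the parabolic inequality~\eqref{Sigma2eta1a} together with $b\equiv 0$ on $\partial I_\beta$ reduces the proof of $J\ge 0$ on $\Sigma$ to the verification of $J(t_0,\cdot)\ge 0$ on $I_\beta$. Once this is established, integrating $J\ge 0$ in time and using $a_{\beta,K}(1+d)=((\beta-d)/\beta)^{p+1}$ yields
$$(1-u(t,\xi\pm(1+d)))^{p+1}\ge (p+1)\eps\mu\bigl((\beta-d)/\beta\bigr)^{p+1}(T-t),$$
so the task boils down to extracting the largest admissible $\eps$.

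This is where the two new estimates enter. Lemma~\ref{ut estimate} together with~\eqref{Lambda est1 2} gives the refined lower bound
\begin{equation*}
u_t(t_0,x)\ge \bigl(1+p\mu\, S(t_0,\beta)\,\Lambda(t_0,|x-\xi|)\bigr)\,\mu\, S(t_0,\beta)\, e^{t_0\Delta_\R}\chi_{I_0}(x).
\end{equation*}
For the matching upper bound on $h(u(t_0,\cdot))$, I would apply Lemma~\ref{u upper estimate} with $N=\lambda\mu$ and $I:=(\overline\Omega\cap \overline I_e)\setminus I_d$, which is legitimate by the smallness hypothesis in~\eqref{smallnessf2}. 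A short geometric check using $\beta\le \bar d\le (d+e)/2$ shows that $\mathrm{dist}(x,\Omega\setminus I)=(|x-\xi|-1-d)_+$ throughout $I_\beta$, so combining with $y(t_0)\le 1-\tau$ from Lemma~\ref{estimates for T}(i) gives $u(t_0,x)\le (1-\tau)\tilde u(|x-\xi|)$ with $\tilde u$ as in~\eqref{DefTildeu}. Since the constraint $K\le p$ in $\mathcal{A}_3$ makes $h$ nondecreasing on $[0,1)$, this yields $h(u(t_0,x))\le W_{\tau,K,\lambda}(|x-\xi|)$. Plugging both bounds into the required inequality $\eps\mu\, b\, h(u(t_0,\cdot))\le u_t(t_0,\cdot)$, using the explicit representation $e^{t\Delta_\R}\chi_{I_0}(\xi+r)=\tfrac12[\mathrm{erf}((r+1)/(2\sqrt t))+\mathrm{erf}((1-r)/(2\sqrt t))]$, and restricting the resulting infimum to $r>r_0$ (legitimate since $a_{\beta,K}$ is constant on $[0,r_0]$ while $e^{t\Delta_\R}\chi_{I_0}(\xi+\cdot)$ is radially decreasing), the condition on $\eps$ becomes exactly $\eps\le \tfrac12 S(t_0,\beta)\, G_1^*(\tau,t_0,\beta,K,\lambda)$. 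Taking the supremum over $(\beta,K)\in \mathcal{A}_3$ produces~\eqref{defepshat3}.

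The main technical obstacle will be reconciling the new lower bound on $u_t$, which is naturally stated on $I_\beta$ and then transferred to an $\R$-based expression via Proposition~\ref{prop semigroup comparison} at the cost of the factor $S(t,\beta)$, with the maximum-principle machinery of Lemma~\ref{control Theta}. Verifying $J(t_0,\cdot)\ge 0$ throughout $I_\beta$ (rather than merely on $I_0$) requires tracking the nonlinear correction $(1+p\mu S\Lambda)$ against the cut-off $a_{\beta,K}$ through the transition region $(1,1+\beta)$, where $a_{\beta,K}$ follows the pure power $((1+\beta-r)/\beta)^{p+1}$ while $W_{\tau,K,\lambda}$ varies through the hyperbolic cosine in $\tilde u$; it is precisely the simultaneous optimality of the $q=1$ profile in Lemma~\ref{Explicita1} and of the geometric decay of $\tilde u$ outside $I_d$ that makes the resulting threshold large enough to yield the improved values of~$\rho$ reported in Table~\ref{tableex1}.
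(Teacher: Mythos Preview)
Your proposal is correct and follows essentially the same approach as the paper's proof: fix $(\beta,K)\in\mathcal{A}_3$, apply the maximum-principle machinery of Lemma~\ref{control Theta} with $q=\eta=1$ and the cut-off $a_{\beta,K}$ from Lemma~\ref{Explicita1}, replace the crude bound $u_t\ge \mu e^{t\Delta_\Omega}\chi_{I_0}$ by the refined estimate from Lemma~\ref{ut estimate}, and control $h(u(t_0,\cdot))$ via Lemma~\ref{u upper estimate} using the smallness hypothesis in~\eqref{smallnessf2}. The only cosmetic differences are that the paper applies Lemma~\ref{u upper estimate} separately on each side of the bump (with $I=(\xi+1+d,\min(\xi+1+e,R))$ and its mirror) rather than on the union, and that when restricting the infimum to $r>r_0$ one should also note that $W_{\tau,K,\lambda}$ is constant on $[0,r_0]$ and $\Lambda(t_0,\cdot)$ is nonincreasing---both immediate from the definitions.
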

\goodbreak

\begin{proof}
Let $(\beta,K)\in\mathcal{A}_3$ and let $a(r)=a_{\beta,K}(r)$ be given by \eqref{function a q=1B}.
By the proof of Lemma~\ref{control Theta} with $q=1$, $\eta=1$, 
using the lower bound \eqref{ut improved est 2}, \eqref{Lambda est1 2} for $u_t$ in 
Lemma~\ref{ut estimate}, instead of  \eqref{low bound u_t}, 
we have
$$(1-u(t,x))^{p+1} \geq (p+1) {\hskip 0.5pt}\eps {\hskip 0.5pt}\mu{\hskip 0.5pt} a(|x-\xi|) (T-t) 
\quad\hbox{ in $[t_0,T)\times I_\beta$},$$
with 
\begin{equation}\label{defeps2}
\eps=S(t_0,\beta) \inf_{x\in I_\beta} 
\Bigl(1+p\mu \tilde{\Lambda} (t_0,x)\Bigr)\,
\dfrac{ \,e^{t_0\Delta_\R}\chi_{I_0}(x)}{h(u(t_0,x))a(|x-\xi|)}.
\end{equation}
In particular, by \eqref{function a q=1B}, we have
$$
\big[1 - u\big(t,\xi\pm(1+d)\big)\big]^{p+1} \ge (p+1)\left(\dfrac{\beta-d}{\beta}\right)^{p+1} \eps\, \mu (T-t),
\quad\hbox{ for all $t\in [t_0,T)$.}
$$

 We now estimate $\eps$ from below. Recalling 
\begin{equation}\label{heat-erf} 
e^{t\Delta_\R}\chi_{(-1,1)}(r)
=\dfrac{1}{\sqrt{4\pi t}}  \displaystyle\int_{-1}^1  e^{-\frac{(r-y)^2}{4t}}\,dy
=\dfrac{1}{\sqrt{\pi}}\int_{\frac{r-1}{2\sqrt{t}}}^{\frac{r+1}{2\sqrt{t}}} e^{-Z^2}\,dZ
=\dfrac{1}{2}\Bigl[{\rm erf}\Bigl(\dfrac{r+1}{2\sqrt{t}}\Bigr)+{\rm erf}\Bigl(\dfrac{1-r}{2\sqrt{t}}\Bigr)\Bigr]
\end{equation}
and setting $r=|x-\xi|$, we have
$$
\tilde{\Lambda} (t,x) \geq 
 \dfrac{1}{2}S(t,\beta) \int_0^t  \bigl(1-Y(s)\bigr)^{-\frac{p}{p+1}-1}\left[ \text{\rm erf} \left(\frac{r+1}{2\sqrt{t-s}}\right) 
 + \text{\rm erf} \left(\frac{1-r}{2\sqrt{t-s}}\right)  \right] ds 
= S(t,\beta) \Lambda(t,r),  
$$
where $\Lambda$ is defined in the statement of Theorem~\ref{global result dim 1c}. 

We next proceed to estimate the factor $h(u(t_0,x))$ in \eqref{defeps2} from above.
We recall from \eqref{Wcalcul} that, since $K\leq p$, the function $h$ is monotone increasing as a function of $u$.
Hence, we shall use the upper estimate given in Lemma~\ref{u upper estimate} in $I=\bigl(\xi+1+d,\min(\xi+1+e,R)\bigr)$ with $N=\lambda\mu$.
For this, we note that, for all $x\in [\xi,\xi+1+\beta]$, we have 
${\rm dist}(x,\Omega\setminus I)=(x-\xi-1-d)_+$ if $R\le \xi+1+e$, and that this is still true if $R>\xi+1+e$ due to $\beta<\bar d\le (d+e)/2$.
It follows from \eqref{y psi0} that, for all $x\in [\xi,\xi+1+\beta]$, 
we have 
$$u(t_0,x)\le y(t_0)\theta_{\lambda\mu}(x)=(1-\tau)\tilde u(|x-\xi|),$$
hence 
\begin{equation}\label{upperestimh}
h(u(t_0,x)) \le W(|x-\xi|),
\end{equation}
where $\tilde u$ and $W=W_{\tau,K,\lambda}$ are respectively given by \eqref{DefTildeu} and \eqref{W q=1}.
Moreover, \eqref{upperestimh} remains true for all $x\in I_\beta$ (applying Lemma~\ref{u upper estimate} in $I=\bigl(\max(\xi-1-e,-R),\xi-1-d\bigr))$.

Since $e^{t\Delta_\R}\chi_{(-1,1)}(r)$ and $\Lambda(t,r)$ are even and nonincreasing with respect to $r>0$
and $a(r)$ is even and constant on $[0,r_0]$, it follows from \eqref{upperestimh} that
$$\eps\geq S(t_0,\beta) \inf_{r\in (r_0,1+\beta)} 
\Bigl(1+p\mu S(t_0,\beta)\Lambda (t_0,r)\Bigr)\,
\dfrac{ \,e^{t_0\Delta_\R}\chi_{(-1,1)}(r)}{W(r)a(r)}.$$
This combined with \eqref{heat-erf} yields
\begin{equation*}
\eps \geq\dfrac{1}{2}S(t_0,\beta) \inf_{r\in (0,1+\beta)} 
\Bigl(1+p\mu S(t_0,\beta)\Lambda(t_0,r)\Bigr)
\dfrac{{\rm erf}\Bigl(\frac{r+1}{2\sqrt{t}}\Bigr)+{\rm erf}\Bigl(\frac{1-r}{2\sqrt{t}}\Bigr)}{W(r)a(r)} 
=\dfrac{1}{2} S(t_0,\beta)G_1^*(\tau, t_0,\beta,K,\lambda).
\end{equation*}
The conclusion then follows by taking the supremum over $(\beta,K)\in\mathcal{A}_3$.
\end{proof}

\begin{proof}[Proof of Theorems~\ref{global result dim 1c}-\ref{global result dim 1c2}]
 So as to prove both results at the same time, 
we set $m=0$, $d_2=d_0$ and $d_1=\infty$ in the case of Theorem~\ref{global result dim 1c}.
 It suffices to show that there are no touchdown points in $\overline{\mathcal{D}}$, 
where either:
\begin{itemize}
\item[(i)] $\mathcal{D}=(x_{i+1}+1+d,x_i-1-d)$ for some $i\in\{1,\dots,m-1\}$ 
(in Theorem~\ref{global result dim 1c2}), or 
\vskip 1mm
\item[(ii)] $\mathcal{D}=(x_0+1+d,R)$ (the case $\mathcal{D}=(-R,x_m-1-d)$ is similar).
\end{itemize}
\noindent We set $\Gamma=\partial\mathcal{D}$ in case (i) and $\Gamma=\{x_0+1+d\}$ in case (ii).

Let $\mathcal{A}$ be given by \eqref{condition on parameters 2c}. 
We first claim that $\mathcal{A}$ is nonempty, so that $\rho$ is well defined and positive.
Note that if we take 
$$\beta^2 = \dfrac{p(p+2)-K}{\mu K p},$$
then $\delta_2 (\beta,K) = 0$. Now, we can pick $K=p$ and $\beta= \sqrt{\frac{p+1}{p\mu}}\in (d,d_2)$. 
For this choice of $K,\beta$, we have
$$\delta_1(K) = \dfrac{\arctan(\sqrt{p+1})}{\sqrt{p\mu}} < 1,$$
provided $\mu > \frac{\arctan^2(\sqrt{p+1})}{p}$.
The claim follows.

 Next, by our assumption on $f$, we may select $(\beta,K,\tau,\lambda)\in \mathcal{A}$ such that
$$
\|f\|_{L^\infty(D)} < \mu{\hskip 1pt}\min\left\{\tilde\eps(\beta,K,\tau,\lambda),
\dfrac{1}{p+1} \dfrac{\tau^{p+1}}{(T-t_0(\tau))\mu},\lambda\right\}
$$
with
$$\tilde\eps(\beta,K,\tau,\lambda) =
\dfrac12\left(\dfrac{\beta-d}{\beta}\right)^{p+1}S(t_0(\tau),\beta)G^*(\tau,t_0(\tau),\beta,K, \lambda).$$
 Let $j\in \{i,i+1\}$ in case (i) and $j=0$ in case (ii). 
We shall apply Proposition~\ref{type 1 quantitative1b} with $\xi=x_j$ and $e=2d_1-d\ge 2d_2-d>d$.
By assumption \eqref{condition 1 on f thm21}, we have 
$$f<\lambda\mu\quad\hbox{ on } \bigl([x_j-1-e,x_j-1-d]\cup[x_j+1+d,x_j+1+e]\bigr)\cap \overline\Omega=
(\overline I_e\cap \overline\Omega)\setminus I_d\subset D.$$
Since $\bar d=\min(R-1-|\xi|,d_1)\ge \min(d_0,d_1)=d_2>\beta>d$,
it follows from \eqref{typeIest21}-\eqref{defCalA3} that
$$
(1 - u)^{p+1} \geq (p+1)\mu{\hskip 0.5pt}\tilde\eps(\beta,K,\tau,\lambda)  (T-t)
\quad\hbox{ on $[t_0,T)\times\Gamma$.}
$$
As $\|f\|_{L^\infty(D)}<\min\bigl\{\mu\tilde\eps(\beta,K,\tau,\lambda),
\frac{1}{p+1} \frac{\tau^{p+1}}{T-t_0(\tau)}\bigr\}$, we may apply Lemma~\ref{basic supersol},
to deduce that $\mathcal{T}\cap \mathcal{D}=\emptyset$.

Finally, let us show that $\mathcal{T}\cap \Gamma=\emptyset$.
By the continuity of $f$, assumption (\ref{condition 1 on f}) remains true for some $\tilde d<d$ close to $d$.
Moreover, since $\lambda\mu<\mu\le\|f\|_\infty$ and since $h(u)=K(1-u)+(1-u)^{-p}$ is nondecreasing on $[0,1)$ due to $K\le p$,
the function $W_{\tau,K,\lambda}(r)$ defined by \eqref{W q=1}, \eqref{DefTildeu} is increasing with respect to $d>0$.
It follows that the supremum in (\ref{simplified problem}) is a nonincreasing function of $d\in (0,d_2)$,
and we deduce that $\mathcal{T}\cap\Gamma=\emptyset$. This concludes the proof.
\end{proof}

\begin{remark}\label{global statement remark}
(i) Due to the search for a more precise control of $h(u(t_0,\cdot))$ to increase $\eps$ in \eqref{defeps2}, we have to make a smallness assumption on $f$ on both sides of the bump in Proposition~\ref{type 1 quantitative1b}
 (cf.~\eqref{smallnessf2}). For this reason, excluding touchdown on a single interval in 
 Theorems~\ref{global result dim 1c}-\ref{global result dim 1c2} would require a smallness condition on some additional intervals, 
 unlike in Theorems~\ref{global result dim 1}-\ref{global result dim 1a}. 
For simplicity, we have refrained from giving such a formulation of 
 Theorems~\ref{global result dim 1c}-\ref{global result dim 1c2}
and have restricted ourselves to a more global statement. 

(ii) We could use the more precise formula \eqref{Lambda ut estimate 2} instead of 
\eqref{Lambda est1 2}. However, numerical tests indicate that
the difference is extremely small, while the computational time is considerably larger.
\end{remark}


\section{Numerical procedures} 

\subsection{Iterative procedure for the optimization problem} \label{numerical procedure}

In this section, we describe the iterative procedure that we use to find  an accurate lower estimate for the solution of the optimization problem (\ref{simplified problem})  giving the threshold $\rho$ in Theorems~\ref{global result dim 1}-\ref{global result dim 1a}. 
The procedure consists of three steps:

\textit{Step 1: First exploration.}
 We first use a simple discretized exploration of the optimization set $\mathcal{A}$ 
 for $(\tau,\beta,K)$.
For this, we iterate in $\beta,\tau$ and $K$ as follows: 

$\bullet$  We initialize $\beta$ by setting
$$ \beta_0 = \min \left(1+d,\dfrac{d_0+d}{2}\right).$$
Then, for a chosen  value $\eps_\beta$ of the discretization parameter in $\beta$, we increment $\beta$ in the interval $(d,d_0)$,
 first increasingly (i.e., $\beta_{i+1} = \beta_i + \eps_\beta$) and then decreasingly (i.e., $\beta_{j+1} = \beta_j - \eps_\beta$).
 Note that additional stopping conditions will be given below.

$\bullet$ For each $\beta$,  we initialize $K$ by setting 
$$K_0 = \max \left( \dfrac{p}{\mu\beta^2}-\dfrac{1}{p+1},  \eps_K \right),$$
 where $\eps_K$ is a chosen value of the discretization parameter in $K$,
and then increment $K$ increasingly, i.e. $K_{i+1} = K_i + \eps_K$.

$\bullet$ For each couple $(\beta,K)$, we compute $\delta(\beta,K)$ defined in (\ref{H and G}).  
If it is  less than or equal to $1$, we then iterate in $\tau$.
This is done by picking $n_\tau$ equidistant points in the interval $\bigl(\frac{\mu}{2\mu-\mu_0}, 1\bigr)$, 
where $n_\tau$ is a chosen number of discretization points.
(The corresponding $\eps_\tau$ is thus $\bigl(1-\frac{\mu}{2\mu-\mu_0}\bigr)/n_\tau$.)

$\bullet$ For each such $(\beta,K,\tau)$, we then compute an approximation of $\rho$,  given by
\begin{equation}\label{def approx rho 2}
\tilde\rho:=\dfrac{1}{2}
   \left( \dfrac{\beta-d}{\beta}\right)^{p+1} \dfrac{S\bigl(t_0(\tau),\beta\bigr)}{K+\tau^{-p}} 
   \min \Bigl\{ \tilde H\bigl(t_0(\tau),\beta\bigr), \tilde G\bigl(t_0(\tau),\beta,K\bigr) \Bigr\},
\end{equation}
where 
$\tilde H(t,\beta)$ (resp., $\tilde G(t,\beta,K)$) is a  suitable approximation of $H(t,\beta)$
(resp., $G(t,\beta,K)$),  and $S(t,\beta)$ is given in (\ref{H and G}).
 To define $\tilde G$, $\tilde H$, we set
\begin{equation}\label{defNHDH}
\begin{array}{ll}
&N_H(x)=\text{\rm erf} \left( \dfrac{1}{\sqrt{t}}\left(1+\dfrac{\beta x}{2}\right)\right) 
- \text{\rm erf} \left(\dfrac{\beta x}{2\sqrt{t}}\right),\quad 
D_H(x)=(1-x)^{p+1},\\
\noalign{\vskip 2mm}
&N_G(x)=\text{\rm erf} \left( \dfrac{2-(1-x)\delta}{2\sqrt{t}}\right)
+ \text{\rm erf} \left(\dfrac{(1-x) \delta}{2\sqrt{t}}\right),\quad 
 D_G(x)= (\Gamma^2+1)^{\frac{\alpha}{2}}\cos^\alpha (Ax)
\end{array}
\end{equation}
 and recall that
$$H(t,\beta) = \displaystyle\inf_{0<x<1} \frac{N_H(x)}{D_H(x)},\qquad G(t,\beta,K) =
 \displaystyle\inf_{0<x<1} \frac{ N_G(x)}{D_G(x)},$$
where $\delta=\delta(\beta,K),\Gamma,A,\alpha$ are defined in (\ref{H and G}).
 We then set
\begin{equation}\label{def approx H G 2}
\tilde H(t,\beta):=\min_{0\le i\le n_x} \frac{N_H(x_{i})}{D_H(x_i)},\qquad 
\tilde G(t,\beta,K):=\min_{0\le i\le n_x} \frac{ N_G(x_i)}{D_G(x_{i})}, 
\end{equation}
 where $x_i=i/n_x$ for $i=0,1,\cdots,n_x$, and $n_x$ is the chosen number of discretization points in the interval $[0,1]$.
For each $i$, the corresponding quotients are computed using the error function provided by $Matlab$. 

 $\bullet$ We define the variables $\rho_{opt}$ and $\beta_1^*,K_1^*,\tau_1^*$,
which respectively stand for the largest value of $\tilde\rho$ obtained so far,
and for the corresponding values of the parameters $\beta, K, \tau$.
These variables are updated after each iteration.

$\bullet$  To avoid unnecessary computations, we also observe that we can (dynamically) further restrict the 
ranges of $\beta, K, \tau$, as follows:
\begin{equation}\label{restriction-range}
\beta\ge\frac{d}{1-(2\rho_{opt})^{1/( p+1)}},\qquad
K\le\frac{1}{2\rho_{opt}}-1,\qquad
\tau\ge(2\rho_{opt})^{1/p}.
\end{equation}
Indeed, since 
\begin{equation}\label{restriction-range2}
\bigl(\textstyle\frac{\beta-d}{\beta}\bigr)^{p+1}\le 1,\qquad S(t_0(\tau),\beta)\le 1
\end{equation}
and
\begin{equation}\label{restriction-range3}
H(t,\beta)\le \frac{N_H(0)}{D_H(0)}=\text{\rm erf} \Bigl(\frac{1}{\sqrt{t}}\Bigr)\le 1,\quad
G(t,\beta,K)\le \frac{N_H(1)}{D_H(1)}=\text{\rm erf} \Bigl(\frac{1}{\sqrt{t}}\Bigr)\le 1,
\end{equation}
we have
$$\tilde\rho\le \frac12\min\Bigl[\Bigl(1-\frac{d}{\beta}\Bigr)^{p+1},\frac{1}{K+1},\tau^p\Bigr],$$
so that any choice of $(K,\beta,\tau)$ violating at least one of the conditions in \eqref{restriction-range} will lead to values $\tilde\rho<\rho_{opt}$.

\textit{Step 2: Refined exploration.}  We 
make a finer second exploration near the parameters  $\beta_1^*,K_1^*$ and $\tau_1^*$ 
obtained in  Step 1.
 This is done by repeating Step 1 on the new ranges
\begin{equation}\label{newranges}
[\beta_1^*-\eps_\beta, \beta_1^*+\eps_\beta],\quad
[K_1^*-\eps_K, K_1^*+\eps_K],\quad
[\tau_1^*-\eps_\tau, \tau_1^*+\eps_\tau],
\end{equation}
taking a chosen number of equidistant points in each interval.
The values of $\beta, K, \tau$ corresponding to the largest $\tilde\rho$ obtained are denoted $\beta^*,K^*,\tau^*$.

\textit{Step 3: Lower estimate  of $\rho$.} 
Finally, for the parameters $(\beta^*,K^*,\tau^*)$ selected in  Step 2, 
we recompute a  ``safer'' approximation of the supremum $\rho$ by looking this time for a  lower estimate. This is done by setting
\begin{equation}\label{def approx rho}
\overline\rho:=\dfrac{1}{2}
   \left( \dfrac{\beta^*-d}{\beta^*}\right)^{p+1} \dfrac{S\bigl(t_0(\tau^*),\beta\bigr)}{K^*+\tau^{*-p}} 
   \min \Bigl\{\overline H\bigl(t_0(\tau^*),\beta^*\bigr), \overline G\bigl(t_0(\tau^*),\beta^*,K^*\bigr) \Bigr\},
\end{equation}
where $\overline H(t,\beta)$ (resp., $\overline G(t,\beta,K)$) is now a suitably accurate lower bound of $H(t,\beta)$
(resp., $G(t,\beta,K)$). 

To compute $\overline G$, $\overline H$, this time we choose another (larger)
number $n_x$ of equidistant discretization points of the interval $[0,1]$, we denote $x_i=i/n_x$ for $i=0,1,\cdots,n_x$,
and then set
\begin{equation}\label{def approx H G}
\overline H(t,\beta):=\min_{0\le i\le n_x-1} \frac{N_H(x_{i+1})}{D_H(x_i)},\qquad 
\overline G(t,\beta,K):=\min_{0\le i\le n_x-1} \frac{ N_G(x_{i+1})}{ D_G(x_i)}, 
\end{equation}
 where $N_H, D_H, N_G, D_G$ are given by \eqref{defNHDH}.
For each $i$, the corresponding quotients are computed using the error function provided by $Matlab$. 
Observe that the functions $N_H(x), D_H(x)$, $ N_G(x),  D_G(x)$) are monotonically decreasing
 in $[0,1]$ (owing to $A\in (0,\pi/2)$ and $\delta\in (0,1]$ for $(\beta,\tau,K)\in\mathcal{A}$).
Consequently,
$$\displaystyle\inf_{x_i\le x\le x_{i+1}} \frac{N_H(x)}{D_H(x)} \ge \frac{N_H(x_{i+1})}{D_H(x_i)},\quad
\displaystyle\inf_{x_i\le x\le x_{i+1}}  \frac{ N_G(x)}{ D_G(x)} \ge \frac{ N_G(x_{i+1})}{ D_G(x_i)},$$
hence
\begin{equation}\label{def approx H G2}
H(t,\beta)\ge \overline  H(t,\beta),\qquad G(t,\beta,K)\ge \overline G(t,\beta,K).
\end{equation}
Moreover, the discretization errors can be estimated by
$$H-\overline H\le  \frac{N_H(x_{i_0})-N_H(x_{i_0+1})}{D_H(x_{i_0})},
\qquad G-\overline G\le  \frac{N_G(x_{i_1})-N_G(x_{i_1+1})}{D_G(x_{i_1})},$$
where $i_0, i_1$ are the indices for which the respective minima in (\ref{def approx H G})
are achieved, so that $n_x$ can be adjusted to guarantee a satisfactory error level (say, $10^{-4}$).

\begin{remark}
(i)  In Step 3, it is consistent to choose a larger $n_x$ in order to have good lower estimates of $H,G$,
while in Steps 1 and 2 we  need to choose coarser partitions of the interval $[0,1]$, 
 in order to keep the computational cost of the method within feasible limits.

(ii)  In the above procedure, the only possible sources of errors in excess on $\overline\rho$ 
are the round-off machine errors and the numerical errors in 
the $Matlab$ evaluations (for instance those of $\text{\rm erf}$).
In principle this can be guaranteed with any reasonably prescribed safety margin.

\end{remark}

 In the following table, for $p=2$ and each of the values of $\mu, \|f\|_\infty, d, d_0$ considered in Table~\ref{tableex1},
we give the numerical approximation of the optimal parameters $\tau^*,\beta^*,K^*$ found by the above procedure,
as well as the lower bounds $\overline H(t_0(\tau^*),\beta^*),$
 $\overline G(t_0(\tau^*),\beta^*,K^*)$ for $H, G$ and the approximated semigroup comparison constant $S\bigl(t_0(\tau^*),\beta^*\bigr)$. 

\begin{table}[h]
\begin{center}
\begin{tabular}{|c|cccc|ccc|ccc|c|}
\hline
$p$ & $\mu$ & $\|f\|_\infty$   & $d$   & $d_0$   & $\tau^*$   & $\beta^*$    &   $K^*$    & $\overline H$ & $\overline G$ & $S$ & $\overline \rho$   \\
\hline   
2 & 1   & 1.1              & 0.1  & 5       & 0.7904   & 1.7400    & 1.4787     & 0.9220 & 0.9140   & 0.8452 & \textbf{0.1050}                  \\
\hline
2 & 1.25     & 1.3              & 0.1  & 3      & 0.8094   & 1.5600    & 1.1117     & 0.8807  & 0.8754     &  0.7429 & \textbf{0.1010}  \\
\hline
2 & 2     & 2.25            & 0.1 & 4     & 0.8111   & 1.2200    & 0.7184     & 0.7629 & 0.7650    & 0.8966    & \textbf{0.1182}                  \\                  
\hline
2 & 2     & 2.25            & 0.05 & 4     & 0.8201   & 1.1900    & 0.8228     & 0.7825 & 0.7869    & 0.9004    & \textbf{0.1341}                  \\
\hline
2 & 3     & 3.5            & 0.01 & 5    & 0.8036   & 0.9900    & 0.7402     & 0.7757 & 0.7710    & 0.9510 & \textbf{0.1554}                  \\
\hline
2 & 4     & 4.1            & 0.05 & 5    & 0.8001   & 0.9100     & 0.7407     & 0.8211 & 0.8182    & 0.9574  & \textbf{0.1436}                  \\
\hline
2 & 4   & 4.1              & 0.01 & 5   & 0.8286   & 0.8700     & 0.6705     & 0.7582 & 0.7517    & 0.9623 &\textbf{0.1643}    \\ 
\hline
2 & 4   & 7            & 0.01 & 5  & 0.7905   & 0.7300     & 0.9385     & 0.7137 & 0.7132   & 0.9739 &  \textbf{0.1313}    \\ 
\hline
2 & 6   & 6.2            & 0.01 & 10  & 0.8063   & 0.7300     & 0.7879     & 0.8252 & 0.8223  & 0.9917 &\textbf{0.1682}    \\ 
\hline
2 & 10   & 10            & 0.005 & 10  & 0.8037   & 0.5850     & 0.6331     & 0.7794 & 0.7832  & 0.9948 &\textbf{0.1732}    \\ 
\hline
1.5 & 10   & 10            & 0.005 & 10  & 0.7461   & 0.5850     & 0.6298     & 0.8390 & 0.8335  & 0.9932 &\textbf{0.1857}    \\ 
\hline
1 & 10   & 10            & 0.005 & 10  & 0.6611   & 0.5850     & 0.6000     & 0.8643 & 0.8643  & 0.9909 &\textbf{0.1992}    \\ 
\hline
0.5 & 10   & 10            & 0.005 & 10  & 0.5724   & 0.5850     & 0.4800     & 0.7972 & 0.7991  & 0.9877 &\textbf{0.2157}    \\ 
\hline
\end{tabular}
\end{center}
\caption{ Numerical parameters corresponding to the examples for Theorems~\ref{global result dim 1}-\ref{global result dim 1a}.}
\label{simplified lower estimates}
\label{Table Thm 11}
\end{table}

In practice we use $\eps_\beta=\eps_K = 0.1, n_\tau = 10$ for  Step 1, 
 whereas for Step 2 we take 10 equidistant points in the intervals  \eqref{newranges}.
As for the approximations of $H,G$ we take $n_x = 20$  in Steps 1 and 2. 
For the lower estimates in Step 3 we have chosen $n_x = 50000$ to compute $\overline H$ and $n_x = 2000$ to compute $\overline G$, which guarantees an error level no larger than $10^{-4}$.

\subsection{Numerical lower estimates for Theorem~\ref{global result dim 1b}}

 The numerical procedure is similar to that in Section~6.1, 
this time for the optimization problem (\ref{defrho1b}).
 The main difference is that we also need to
iterate in the parameter $\eta\in (0,1)$ for each couple $(\beta,K)$. 
As before, if $\delta (\beta,K,\eta)\le 1$, we then iterate in $\tau$, but now in the whole interval $(0,1)$.

 In the following table, for $p=2$ and each of the values of $\mu, \|f\|_\infty, d, d_0$ considered in Table~\ref{tableex2},
we give the numerical approximation of the optimal parameters $\tau^*,\eta^*,\beta^*,K^*$ found by the above procedure,
as well as the lower bounds $\overline H(t_0(\tau^*),\beta^*),$
 $\overline G(t_0(\tau^*),\beta^*,K^*,\eta^*)$,  $\overline G(\bar T,\beta^*,K^*,\eta^*)$ for $H, G$ 
 and the approximated semigroup comparison constants $S\bigl(t_0(\tau^*),\beta^*\bigr)$, $S\bigl(\bar T),\beta^*\bigr)$.
We also give the second term  of the minimum in \eqref{defrho1b}:
\begin{equation}\label{rho2star}
\rho_2(\tau^*) := \dfrac{1}{p+1} \dfrac{\tau^{p+1}}{(T-t_0(\tau^*))\mu}.
\end{equation}
In practice, this term  is observed to be larger than the first one.
However,  we have been unable to find a proof of this without assuming $\tau \geq \frac{\mu}{2\mu-\mu_1}$, 
which induces the extra hypothesis $\mu>\mu_1$ (cf.~Theorems~\ref{global result dim 1}-\ref{global result dim 1a}).

\begin{table}[h]
\begin{center}
\resizebox{\textwidth}{!}{
\begin{tabular}{|cccc|cccc|cccccc|c|}
\hline
$\mu$ & $\|f\|_\infty$   & $d$   & $d_0$   & $\tau^*$ & $\eta^*$  & $\beta^*$  &   $K^*$  &  $\overline G(\overline T)$ & $\overline H (t_0)$ & $\overline G(t_0)$ & $S(\overline T)$ & $S(t_0)$ &  $\rho_2(\tau)$  & $\overline{\rho}$   \\
\hline   
0.7   & 0.8              & 0.01  & 8       & 0.5800   &  0.8000    & 2.7100    & 0.8000   & 0.6352 & 0.7322 & 0.9465 & 0.6152 & 0.8492  &  0.1405   & \textbf{0.0815}                  \\
\hline
0.6     & 0.65       & 0.05  & 10    & 0.5600   &    0.8400   &  3.0500      & 1.000 & 0.5770 & 0.7230 & 0.9311 & 0.6289 &  0.8712  &  0.0977  & \textbf{0.0714}                  \\
\hline
0.5     & 0.6          & 0.001 & 6     & 0.3800   &   0.8800     &  3.8010   & 1.1000  & 0.4824 & 0.3440 & 0.9323 & 0.1357 & 0.6551 &   0.0187  &   \textbf{0.0137}                  \\
\hline
0.5     & 0.5            & 0.01 & 7    & 0.4400   &  0.8400      &  4.0100   & 0.9000  & 0.4907 & 0.4068 & 0.8983 & 0.2167 & 0.6865  &  0.0304    & \textbf{0.0228}                  \\
\hline
\end{tabular}
}
\end{center}
\caption{ Numerical parameters corresponding to the examples for Theorem~\ref{global result dim 1b}.}
\label{general approx table}
\end{table}

\subsection{Numerical lower estimates for 
Theorems~\ref{global result dim 1c}-\ref{global result dim 1c2}}

We use a similar numerical procedure to that presented in Section~\ref{numerical procedure}, this time for the optimization problem \eqref{defrho1c}. 
As a difference, the range for $K$ is now
\begin{equation}\label{newrangeK}
\eps_K\le K\le \min\Bigl(p,\frac{p(p+2)}{1+p\mu\beta^2}\Bigr).
\end{equation}
Also, we need to test the condition $\delta_1(K)+\delta_2(\beta,K)\le 1$ instead of $\delta(\beta,K)\le 1$.
Moreover, the additional stopping conditions in \eqref{restriction-range} are not available.
However, unlike in Section~6.1, 
this is not essential since the range for $K$ is already {\rm bounded, owing to} \eqref{newrangeK}.

 We also need to iterate the procedure with respect to the additional parameter $\lambda$.
Namely, we carry out the exploration of the parameter $\lambda$ in a certain subinterval of $(0,1)$ for each fixed admissible $(\beta,K,\tau)$.
To restrict the range of $\lambda$, let us first rewrite \eqref{defrho1c} under the form $\rho=\min(\hat\rho,\lambda)$
and observe that $G^*$ in \eqref{defrho1c} is a nonincreasing function of $\lambda\in (0,1)$ (due to \eqref{W q=1}, \eqref{DefTildeu}).
Since our numerical tests reveal that the method does not produce values of $\hat\rho$ 
larger than $0.3$ for $p=2$, we intitialize with $\lambda_0=0.3$.
We then compute the corresponding numerical value of $\hat\rho$.
As long as $\hat\rho_i<\lambda_i$, we iterate by setting $\lambda_{i+1}=\lambda_i-0.01$.
Once $\hat\rho_i\ge\lambda_i$, we stop and retain the largest between $\rho_i=\min(\hat\rho_i,\lambda_i)$
and $\rho_{i-1}=\min(\hat\rho_{i-1},\lambda_{i-1})$.
The values of $\lambda$ that we used in pratice remain in the interval $[0.2,0.3]$ (for $p=2$)
and the step $\Delta \lambda=0.01$ turns out to be sufficient since the results are not very sensitive to the variations of $\lambda$.

 Let us turn our attention to the main term in \eqref{defrho1c}, that we rewrite as
\begin{equation}\label{defG*2}
G^* = \displaystyle \inf_{r\in (r_0,1+\beta)}  \mathcal{G}(r),
\qquad\hbox{ with } \mathcal{G}(r)=\Bigl(1+p\mu S(t,\beta)\Lambda(t,r)\Bigr)\,
\dfrac{{\rm erf}\Bigl(\frac{r+1}{2\sqrt{t}}\Bigr)+{\rm erf}\Bigl(\frac{1-r}{2\sqrt{t}}\Bigr)}{W_{\tau,K, \lambda}(r) a_{\beta,K}(r)}
\end{equation}
(for fixed values of the parameters $\tau,t,\beta,K, \lambda$), where
\begin{equation}\label{defLambda2}
\Lambda (t,r) = \frac{1}{2} \int_0^t  \bigl(1-Y(s)\bigr)^{-\frac{p}{p+1}-1}\left[ \text{\rm erf} \left(\frac{r+1}{2\sqrt{ t-s}}\right) + \text{\rm erf} \left(\frac{1-r}{2\sqrt{ t-s}} \right) \right] ds
\end{equation}
and the functions $S, W, a, Y$ are defined in Theorem~\ref{global result dim 1c}.

As for the time integral in $\Lambda(t,r)$,
in the exploration process we use a coarser partition of the interval $[0,t]$ and the Simpson method, in order to have good approximations while keeping a reasonable computational time. 
However, in the last step, once we have chosen the approximated optimal parameters $(\tau,\beta,K,\lambda)$, we use the following monotonicity properties of the integrand and the rectangle rule in order to give a safe lower estimate of the ratio $\rho$. 

First, although $Y(s)$ is not monotone in general, we however note that,
 since $S(s,0)$ decreases with~$s$, we can estimate
\begin{equation}\label{estimY}
Y(s) \geq \tilde{Y} (t,s) = S(t,0)\, \text{erf} \left(\frac{1}{\sqrt{s}}\right) \frac{(p+1) \mu}{2}s,
\qquad 0<s<t.
\end{equation}
We will see numerically in Table~\ref{Table Thm 22} that $S(t_0,\beta)$, which 
satisfies $S(t_0,\beta)\le S(t_0,0)\le 1$, is of the order $\sim 0.99$ in our examples, so that the loss from from estimate~\eqref{estimY} is quite small.
Now, we observe that $\tilde Y (t,s)$ is monotonically increasing with respect to $s\in (0,t)$, due to
$$
\frac{\partial}{\partial s} \left[\sqrt{\pi}\,\text{erf} \left(\frac{1}{\sqrt{s}} \right) s \right] =
2 \int_0^{\frac{1}{\sqrt{s}}} e^{-t^2}dt - \frac{e^{-\frac{1}{s}}}{\sqrt{s}}
\geq 2 \frac{e^{-\frac{1}{s}}}{\sqrt{s}} - \frac{e^{-\frac{1}{s}}}{\sqrt{s}} >0.
$$
For fixed $(t,r)$, let $\{[t_j, t_{j+1}], \ j\in J\}$ be a partition of the interval $[0,t]$,
and set $\tau_j=t_j$ if $r\le1$ and $\tau_j=t_{j+1}$ if $r>1$.
We can therefore estimate $\Lambda(t,r)$ in \eqref{defLambda2} by
\begin{equation}\label{estimLambda}
\Lambda (t,r) 
\ge\dfrac{1}{2}\displaystyle\sum_{j\in J} (t_{j+1} - t_j) (1-\tilde Y (t,t_{j}))^{-\frac{p}{p+1}-1}
\Bigl[\text{\rm erf}\Bigl({\frac{r+1}{2\sqrt{t-t_j}}}\Bigr) + \text{\rm erf}\Bigl({\frac{1-r}{2\sqrt{t-\tau_j}}}\Bigr)\Bigr]_+.
\end{equation}
Since we are computing a lower estimate only in the last step, we can choose a much finer partition of the interval $[0,t]$, so as to keep enough accuracy in the rectangle rule.
For the examples of Table~\ref{Table Thm 22} we have used a partition of $[0,t]$ in 100 equal 
subintervals, while for the exploration process, we have used only 10.

Finally, we observe in \eqref{defLambda2} that $\Lambda(t,r)$ is monotonically decreasing as a function of $r>0$, as well as the function
$W_{\tau,\beta,K}$ defined in \eqref{W q=1} (owing to $K\le p$). Therefore, like in the numerical procedure described in Section~\ref{numerical procedure}, both the numerator and denominator 
 of $\mathcal{G}(r)$ in \eqref{defG*2}
are monotonically decreasing as functions of $r$. 
Hence, applying the same strategy as in Section~\ref{numerical procedure} 
(cf.~(\ref{def approx H G})-(\ref{def approx H G2}))
and then using \eqref{estimLambda} at the discretization
points $r=r_i$, we can compute a safe lower estimate of the infimum
  $G^\ast$ in \eqref{defG*2} once the parameters $\beta,K,\tau$ are chosen. 
For the examples in Table~\ref{Table Thm 22} we 
used a partition of 5000 subintervals, while in the exploration process, we used only 20. 

The following Table~\ref{simplified lower estimates} is the analogue for 
Theorems~\ref{global result dim 1c}-\ref{global result dim 1c2}
of Table~\ref{simplified lower estimates}, using the numerical procedure described above.  
We point out that, due to the finer mesh in the final step, the difference between the lower estimate $\overline \rho (\tau^*, \beta^*, K^*)$ and the explored value $\tilde\rho (\tau^*, \beta^*, K^*)$ is no larger than $10^{-3}$ for any of these examples.

\begin{table}[h]
\begin{center}
\begin{tabular}{|c|cccc|ccc|cccc|c|}
\hline
$p$ & $\mu$ & $\|f\|_\infty$   & $d$   & $d_0$   & $\tau$  & $\beta$ &  $K$ & $\overline {G^*}$   & $S(t_0,\beta)$ & $\rho_2 (\tau)$ 
&  $\overline \lambda$ &  $\overline \rho$   \\
\hline   
2 & 1   & 1.1              & 0.1  & 5       & 0.8000    &    1.6600 & 0.6800   & 0.5474 & 0.9899  &  0.4521  &  0.23  &  \textbf{0.2249}                      \\
\hline
2 & 1.25     & 1.3         & 0.1  & 3       & 0.8000   &  1.5400    & 0.5600   & 0.5735 & 0.9809  &  0.5423 &  0.24   & \textbf{0.2299}                   \\
\hline
2 & 2     & 2.25            & 0.1  & 4     & 0.8000    &  1.1400    & 0.6200   & 0.5601 & 0.9929 &  0.6482  &  0.22   & \textbf{0.2111}                     \\
\hline
2 & 2     & 2.25            & 0.05 & 4     & 0.7800    &  1.2300    & 0.5000   & 0.5712 & 0.9923 &  0.6272  &  0.26    & \textbf{0.2502}                 \\
\hline
2 & 3     & 3.5             & 0.01 & 5     & 0.8000    &  0.9300    & 0.6400   & 0.5591 & 0.9968 &  0.7106  &  0.28      & \textbf{0.2698}              \\
\hline
2 & 4     & 4.1             & 0.05 & 5     & 0.7800    &  0.9100    & 0.4200   & 0.5930 & 0.9971 &  0.8071  &  0.26  & \textbf{0.2495}                        \\
\hline
2 & 4     & 4.1             & 0.01  & 5    & 0.7200    &  1.0100    & 0.3200   & 0.5726 & 0.9965 &  0.7631  &  0.28   & \textbf{0.2769}    \\ 
\hline
2 & 4     & 7               & 0.01  & 5    & 0.7400    &  0.7700    & 0.6600   & 0.4653 & 0.9981 &  0.5327  &  0.23  & \textbf{0.2232}    \\ 
\hline
2 & 6     & 6.2             & 0.01  & 10   & 0.7800    &  0.7300    & 0.4600   & 0.5957 & 0.9994 &  0.8529  &  0.29   & \textbf{0.2856}    \\ 
\hline
2 & 10   & 10               & 0.005 & 10   & 0.8000    &  0.5450    & 0.5200   & 0.6007 & 0.9997 &  0.9310  &  0.3   & \textbf{0.2921} \\ 
\hline
1.5 & 10   & 10               & 0.005 & 10   & 0.7400    &  0.5450    & 0.3800   & 0.6349 & 0.9996 &  0.9074  &  0.32   & \textbf{0.3101} \\ 
\hline
1 & 10   & 10               & 0.005 & 10   & 0.6800    &  0.5250    & 0.3000   & 0.6762 & 0.9995 &  0.8755  &  0.34   & \textbf{0.3315} \\ 
\hline
0.5 & 10   & 10               & 0.005 & 10   & 0.6200    &  0.4650    & 0.2600   & 0.7503 & 0.9993 &  0.8231  &  0.37   & \textbf{0.3689} \\ 
\hline
\end{tabular}
\end{center}
\caption{Numerical parameters corresponding to the examples for 
Theorems~\ref{global result dim 1c}-\ref{global result dim 1c2}.}
\label{Table Thm 22}
\end{table} 

\begin{figure}
  \centering
\scalebox{0.6}[0.4]{\includegraphics[scale=1]{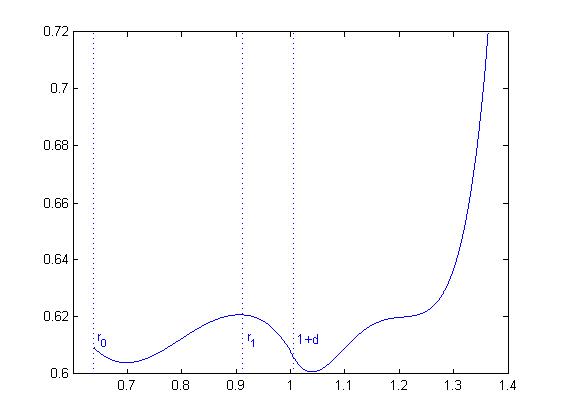}}
  \caption{The function $\mathcal{G}(r)$ in \eqref{defG*2} for $p=2$, $\mu=\|f\|_\infty=10$, 
  $d=0.005$, $d_0=10$.}
\label{FigGr}
\end{figure}

\begin{remark} \label{disclimitations}
(i) We observe from Tables~\ref{Table Thm 11} and \ref{Table Thm 22} 
that, in our examples, the factors $\bigl(\textstyle\frac{\beta-d}{\beta}\bigr)^{p+1}$ and $S=S(t_0,\beta)$
in formulae (\ref{simplified problem}) and \eqref{defrho1c}
are close to $1$ and have only small influence on the value of~$\rho$. 
We also see that the quantity $\rho_2(\tau)$ in \eqref{rho2star} does not affect $\rho$
(indeed, it corresponds to the second term in \eqref{defrho1c}, and turns out to be larger than the first term).
It follows that $\rho\approx \frac12 G^*$ in~\eqref{defrho1c},
once the optimal numerical parameters have been selected.

 (ii) Recall that $G^*$ is obtained as the infimum of the 
function $\mathcal{G}(r)$ in \eqref{defG*2}.
A plot of $\mathcal{G}(r)$ is given in Figure~\ref{FigGr} 
(for the last example with $p=2$ in Table~\ref{Table Thm 22},
and following the numerical method described above).
We observe that $\mathcal{G}(r)$ appears to be neither monotone nor convex.

(iii) The quantity $G^*$ could possibly be increased by taking into account the enhancing effect on $u_t$ of the positive values of $f$ outside the interval $I_0=(x_0-1,x_0+1)$, so as to improve estimate \eqref{ut improved est 2}
 and the ratio in \eqref{defeps2}.
This could be done at the expense of {\bf lower} assumptions on $f$ 
in the region where we want to rule out touchdown.
We have not pursued this further, since this would deviate too much from the main line of the article.
\end{remark} 

\goodbreak


\section{Appendix 1: Comparison estimates for the heat semigroup}

In this appendix, we establish the quantitative comparison properties for the heat semigroup,
that we have used in order to express the infima in (\ref{epsilon 1b}) and (\ref{epsilon 1}) in terms of the error function.
Here $e^{t\Delta_\Omega}$ and $e^{t\Delta_\mathbb{R}}$
respectively denote the Dirichlet heat semigroup on $\Omega$ and the heat semigroup on $\mathbb{R}$.

\begin{proposition}\label{prop semigroup comparison}
Let $\Omega = (-R,R)$ and assume that $I_0:=(x_0-1,x_0+1)\subset\subset \Omega$.
Let $\phi \in L^\infty (\mathbb{R})$ be a nonnegative function, symmetric with respect to $x_0$, nonincreasing in $|x-x_0|$ and supported in $I_0$.
Then, for all $t>0$, we have
\begin{equation}\label{compuwB}
\left(e^{t\Delta_\Omega}\phi \right) (x) 
\geq e^{-\lambda_\ell t}\Bigl[1-e^{-d_1(R-x)/t}\Bigr]\left(e^{t\Delta_\mathbb{R}}\phi \right) (x),
\quad x_0\le x\le R,
\end{equation}
and
\begin{equation}\label{compuwA}
\left(e^{t\Delta_\Omega}\phi \right) (x) 
\geq e^{-\lambda_\ell t}\Bigl[1-e^{-d_2(R+x)/t}\Bigr]\left(e^{t\Delta_\mathbb{R}}\phi \right) (x),
\quad -R\le x\le x_0,
\end{equation}
where $\ell=R-|x_0|$, $d_1=R-x_0-1$, $d_2=R+x_0-1$ and $\lambda_\ell=\Bigl(\dfrac{\pi}{2\ell}\Bigr)^2$.
\end{proposition}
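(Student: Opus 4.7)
The plan is as follows. By the $x\mapsto-x$ symmetry of $\Omega$, it suffices to prove \eqref{compuwB}; we may assume $x_0\ge 0$, so that $\ell=R-x_0$ and $d_1=\ell-1$. Write $v(t,x):=e^{t\Delta_\mathbb{R}}\phi(x)$. The starting point is the elementary Gaussian identity $(2R-x-y)^2-(x-y)^2=4(R-x)(R-y)$, which gives
\[
G_\mathbb{R}(t,x-y)-G_\mathbb{R}(t,2R-x-y)=G_\mathbb{R}(t,x-y)\bigl[1-e^{-(R-x)(R-y)/t}\bigr].
\]
Since $\phi$ is supported in $I_0$, we have $R-y\ge d_1$ on $\operatorname{supp}\phi$; integrating against $\phi$ yields, for $x\in[x_0,R]$,
\[
v(t,x)-v(t,2R-x)\ge\bigl[1-e^{-d_1(R-x)/t}\bigr]v(t,x).
\]
So it will be enough to exhibit a sub-solution realizing the desired decay in $t$.

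Set $W(t,x):=e^{-\lambda_\ell t}\bigl[1-e^{-d_1(R-x)/t}\bigr]v(t,x)$. Writing $W=e^{-\lambda_\ell t}A\cdot v$ with $A:=1-e^{-d_1(R-x)/t}$ and using $v_t=v_{xx}$, a direct computation gives
\[
W_t-W_{xx}=-\lambda_\ell W+\frac{e^{-\lambda_\ell t}\,d_1\,e^{-d_1(R-x)/t}}{t^2}\int_\mathbb{R}(y-x_0-1)\,G_\mathbb{R}(t,x-y)\,\phi(y)\,dy.
\]
The integral is non-positive because $y\le x_0+1$ on $\operatorname{supp}\phi$, hence $W_t-W_{xx}\le-\lambda_\ell W\le 0$; thus $W$ is a smooth non-negative sub-solution of the heat equation on $(0,T)\times(-\infty,R)$, with $W(t,R)=0$ and $W(0^+,\cdot)=\phi$ on $\operatorname{supp}\phi$.

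It remains to compare $W$ with $e^{t\Delta_\Omega}\phi$ on $[0,T)\times[x_0,R]$. Direct comparison on $\Omega$ fails because $W(t,-R)>0$ violates the Dirichlet data at $-R$. To circumvent this we exploit the symmetry of $\phi$ around $x_0$: set $J:=(2x_0-R,R)\subset\Omega$ and $u_J:=e^{t\Delta_J}\phi$. By domain monotonicity $e^{t\Delta_\Omega}\phi\ge u_J$, and by the symmetry of $\phi$, $u_J$ is symmetric about $x_0$, so $\partial_xu_J(t,x_0)=0$. Define $W^*(t,x):=W(t,2x_0-x)$; it is also a sub-solution (by the $x\mapsto 2x_0-x$ invariance of the heat operator together with the symmetry of $v$), and it vanishes at $x=2x_0-R$. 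Using the symmetry of $v$ one checks that $\min(W,W^*)=W$ precisely on $[x_0,R]$, and this cut-off function has matching boundary and initial values against $u_J$ on $J$. A parabolic comparison then yields $\min(W,W^*)\le u_J$ on $J$, so that on $[x_0,R]$
\[
e^{t\Delta_\Omega}\phi(x)\ge u_J(t,x)\ge W(t,x)=e^{-\lambda_\ell t}\bigl[1-e^{-d_1(R-x)/t}\bigr]v(t,x),
\]
which is \eqref{compuwB}. The estimate \eqref{compuwA} follows by the $x\mapsto-x$ reflection.

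The main obstacle is the parabolic comparison between $\min(W,W^*)$ and $u_J$ at the symmetry axis $\{x=x_0\}$: here $\min(W,W^*)$ has a local maximum with only Lipschitz regularity, so its distributional Laplacian carries a negative Dirac mass, preventing a direct appeal to the classical or viscosity maximum principle. The resolution is to work on the half-interval $[x_0,R]$ and exhibit $W$ as a sub-solution for the Neumann--Dirichlet problem satisfied by $u_J$ (Neumann at $x_0$, Dirichlet at $R$), combining the strict inequality $W_t-W_{xx}\le-\lambda_\ell W<0$ with a Hopf-type argument at $x_0$ that rules out a negative interior minimum of $u_J-W$.
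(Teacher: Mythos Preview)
Your computation that $W:=e^{-\lambda_\ell t}\bigl[1-e^{-d_1(R-x)/t}\bigr]\,e^{t\Delta_\mathbb{R}}\phi$ satisfies $W_t-W_{xx}\le -\lambda_\ell W$ is correct, and your first displayed inequality is precisely the half-line kernel estimate used at the end of the paper's proof. The genuine gap is the boundary comparison at $x=x_0$. Since $v_x(t,x_0)=0$ by symmetry while $A_x(t,x_0)=-\tfrac{d_1}{t}e^{-d_1\ell/t}<0$, you have $W_x(t,x_0)<0$, hence $z_x(t,x_0)>0$ for $z:=u_J-W$. But this sign is \emph{consistent} with, not contradictory to, a negative minimum of $z$ on the lateral piece $\{x_0\}$: Hopf's lemma at such a boundary minimum predicts exactly $\partial_\nu z<0$, i.e.\ $z_x(t,x_0)>0$. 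Equivalently, the Neumann maximum principle on $[x_0,R]$ would require the sub-solution to satisfy $W_x(t,x_0)\ge 0$, which fails. So the proposed ``Hopf-type argument'' does not close, and $u_J\ge W$ on $[x_0,R]$ is left unproved. (The same obstruction reappears in the $\min(W,W^*)$ formulation: the downward kink at $x_0$ contributes a positive Dirac mass to $(\partial_t-\partial_{xx})\min(W,W^*)$, so it is not a distributional sub-solution.)

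The paper circumvents this by decoupling the two boundary effects. It first secures the single-point bound
\[
e^{t\Delta_\Omega}\phi(x_0)\ \ge\ e^{-\lambda_\ell t}\,e^{t\Delta_\mathbb{R}}\phi(x_0)
\]
via a separate argument on the symmetric interval $\Omega_1=(x_0-\ell,x_0+\ell)$: the product inequality $e^{t\Delta_{\Omega_1}}(\phi_1\phi_2)\ge (e^{t\Delta_{\Omega_1}}\phi_1)(e^{t\Delta_\mathbb{R}}\phi_2)$ gives $e^{t\Delta_{\Omega_1}}\phi\ge (e^{t\Delta_{\Omega_1}}\chi_{\Omega_1})(e^{t\Delta_\mathbb{R}}\phi)$, and the first factor at $x_0$ is $\ge e^{-\lambda_\ell t}$ by comparison with the principal Dirichlet eigenfunction $\cos\bigl(\pi(x-x_0)/(2\ell)\bigr)$. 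With this \emph{Dirichlet} information at $x_0$ in hand, an ordinary Dirichlet comparison on $[x_0,R]$ between $e^{t\Delta_\Omega}\phi$ and $e^{-\lambda_\ell t}$ times the half-line solution $w$ goes through with no Neumann issue; the factor $[1-e^{-d_1(R-x)/t}]$ then enters only afterwards, via the explicit half-line heat kernel (your first inequality). In short, the decay $e^{-\lambda_\ell t}$ must be \emph{produced} at $x_0$ by an independent eigenfunction argument before one can compare on $[x_0,R]$; it cannot be absorbed into a single sub-solution there, because any such sub-solution carries the wrong Neumann flux at $x_0$.
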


\begin{remark}
(i) We note that similar {\it qualitative} results follow from known estimates of the Dirichlet heat kernel estimates from~\cite{Zh}:
\begin{equation}\label{estimZh}
G(t,x,y)\ge c_1\min\biggl[1,\frac{\delta(x)\delta(y)}{t}\biggr](4\pi t)^{-n/2}e^{-c_2|x-y|^2/t},\quad 0<t<T.
\end{equation}
The lower bound (\ref{estimZh}) is valid in any sufficiently smooth bounded domain $\Omega\subset \mathbb{R}^n$
and the corresponding upper bound is also true.
Although estimate (\ref{estimZh}) is quite powerful (and requires sophisticated methods), it is not suitable for our needs, 
since the constants $c_1, c_2,  T>0$  depending on $\Omega$ in~\cite{Zh} are not quantitatively estimated.

(ii) Another key feature of estimates (\ref{compuwB})-(\ref{compuwA}) in view of 
Theorems~\ref{global result dim 1}-\ref{global result dim 1a} is that they imply
\begin{equation}\label{compuwBasympt}
\frac{\left(e^{t\Delta_\Omega}\chi_{I_0}\right) (x)}{\left(e^{t\Delta_\mathbb{R}}\chi_{I_0}\right) (x)}\to 1,
\quad\hbox{ as $t/\delta(x)\to 0$},
\end{equation}
with quantitative control of the convergence, whereas (\ref{compuwBasympt}) does not follow from (\ref{estimZh}). 
Qualitative properties similar to (\ref{compuwBasympt}), valid also in higher dimensions, were obtained in~\cite{MS00} by different methods.
 \end{remark}

For the proof of Proposition~\ref{prop semigroup comparison} we will use the following lemma.

\begin{lemma}\label{semigroup comparison Lemma}
Let $\Omega = (x_0-\ell,x_0+\ell)$ 
 and $\phi_1,\phi_2\in L^\infty(\mathbb{R})$ be nonnegative, symmetric with respect to $x_0$ and nonincreasing in $|x-x_0|$.
Then we have
\begin{equation}\label{ProdSemigroup0}
e^{t\Delta_\Omega} (\phi_1\phi_2)\geq \left(e^{t\Delta_\Omega}\phi_1\right) \left(e^{t\Delta_\mathbb{R}} \phi_2\right)
\ge  \left(e^{t\Delta_\Omega}\phi_1\right) \left(e^{t\Delta_\Omega} \phi_2\right)
\quad\hbox{ in $(0,\infty)\times\Omega$.}
\end{equation}
\end{lemma}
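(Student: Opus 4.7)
The plan is to translate so that $x_0 = 0$ (the hypotheses are translation-invariant) and treat the two inequalities separately.

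The \emph{second} inequality is elementary: the difference $e^{t\Delta_\R}\phi_2 - e^{t\Delta_\Omega}\phi_2$ solves the heat equation on $\Omega$ with zero initial data and nonnegative boundary values $\bigl(e^{t\Delta_\R}\phi_2\bigr)(\pm\ell)\ge 0$, so the maximum principle yields $e^{t\Delta_\Omega}\phi_2 \le e^{t\Delta_\R}\phi_2$ on $\Omega$; multiplying by the nonnegative factor $e^{t\Delta_\Omega}\phi_1$ preserves the inequality.

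For the \emph{first} inequality, set $u_1 = e^{t\Delta_\Omega}\phi_1$, $u_2 = e^{t\Delta_\R}\phi_2$, $v = e^{t\Delta_\Omega}(\phi_1\phi_2)$ and $w = u_1 u_2$. A direct computation using the heat equations satisfied by $u_1$ and $u_2$ gives
\[
w_t - w_{xx} = u_{1,xx}u_2 + u_1 u_{2,xx} - \bigl(u_{1,xx}u_2 + 2u_{1,x}u_{2,x} + u_1 u_{2,xx}\bigr) = -2\, u_{1,x}\,u_{2,x}.
\]
The key observation is that $u_{1,x}$ and $u_{2,x}$ have the same sign pointwise in $\Omega$, namely both $\le 0$ on $(0,\ell)$ and both $\ge 0$ on $(-\ell,0)$. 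By uniqueness and the symmetry of the data under $x\mapsto -x$, each $u_i(t,\cdot)$ is even. Monotonicity for $u_2$ is immediate from the convolution representation with an even, radially-decreasing Gaussian. For $u_1$, one works on the half-interval $[0,\ell]$, where $u_1$ solves the heat equation with $u_{1,x}(t,0)=0$ (by symmetry) and $u_1(t,\ell)=0$; the function $z := -u_{1,x}$ then satisfies the heat equation with $z(t,0)=0$, $z_x(t,\ell) = -u_{1,xx}(t,\ell) = -u_{1,t}(t,\ell) = 0$, and $z(0,\cdot) = -\phi_1' \ge 0$, so the maximum principle yields $z \ge 0$, i.e., $u_{1,x}\le 0$ on $(0,\ell)$ (the sign on $(-\ell,0)$ following from evenness of $u_1$). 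This gives $w_t - w_{xx}\le 0$, so that $v - w$ is a supersolution of the heat equation.

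The proof is then completed by the parabolic maximum principle: $(v-w)(0,\cdot) = 0$ by construction, and on the lateral boundary $\{\pm\ell\}$ we have $v=0$ (Dirichlet condition) while $w = u_1 \cdot u_2 = 0\cdot u_2 = 0$ since $u_1$ vanishes there. Hence $v\ge w$ throughout $(0,\infty)\times\Omega$, which is the desired inequality. The main obstacle is the rigorous justification of the radial monotonicity of $u_1$: since $\phi_1$ is only $L^\infty$, one first mollifies it by an even, radially-decreasing kernel (which preserves both symmetry and radial monotonicity in one dimension), applies the above argument to the smoothed problem, and then passes to the limit using continuity of the Dirichlet semigroup on $L^\infty$.
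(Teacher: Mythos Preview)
Your proof is correct and follows essentially the same approach as the paper: the same subsolution computation $w_t - w_{xx} = -2u_{1,x}u_{2,x}\le 0$ for the product $w=u_1u_2$, the same boundary and initial checks, and the same maximum-principle comparison for the second inequality. The paper simply asserts the evenness and radial monotonicity of $e^{t\Delta_\Omega}\phi_1$ and $e^{t\Delta_\R}\phi_2$ without argument, whereas you supply a detailed justification via the auxiliary function $z=-u_{1,x}$ and a mollification step; this extra care is harmless and, if anything, makes your write-up more self-contained.
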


\begin{proof}
Assume $x_0=0$ without loss of generality and set 
$$v(t,\cdot)=e^{t\Delta_\Omega}\phi_1,\quad w(t,\cdot) =e^{t\Delta_\mathbb{R}} \phi_2,\quad 
\underline{\varphi}(t,x)=v(t,x)\,w(t,x),\qquad t>0,\quad x\in \Omega.$$
Note that for each $t>0$, the functions $x\mapsto v(t,x)$ and $x\mapsto w(t,x)$ are even in $x$ and nonincreasing for $x\in [0,\ell]$.
Therefore,
$$\underline{\varphi}_t - \underline{\varphi}_{xx} = v(w_t-w_{xx})+w(v_t-v_{xx})-2v_x w_x = -2v_x w_x \leq 0,
\qquad t>0,\quad x\in (-\ell,\ell).$$
Since $\underline{\varphi}(0,\cdot)=\phi_1\phi_2$ and $\underline{\varphi}(t,\pm \ell)=0$, it follows from the maximum principle
that $e^{t\Delta_\Omega}  (\phi_1\phi_2)\ge \underline{\varphi}$  
in $[0,\infty)\times (-\ell,\ell)$,
 i.e. the first inequality in (\ref{ProdSemigroup0}). The second follows from the maximum principle.
\end{proof}

\begin{proof}[Proof of Proposition~\ref{prop semigroup comparison}]
It suffices to prove (\ref{compuwB}) (changing $x$ to $-x$). Let 
$v(t,x)= \left(e^{t\Delta_\Omega}\phi\right)(x)$
and let $w$ be the solution of the problem
\begin{equation*}
\left\lbrace \begin{array}{ll}
w_t - w_{xx} = 0, &  t>0,\ x\in (-\infty,R), \\
w(t,R) = 0, & t>0, \\
w(0,x) = \phi(x), & x\in (-\infty,R).
\end{array}\right.
\end{equation*}
Set $\Omega_1=(x_0-\ell,x_0+\ell)\subset\Omega$. By the maximum principle and (\ref{ProdSemigroup0}), we have
\begin{equation}\label{compuw1a}
v \ge e^{t\Delta_{\Omega_1}}\phi
 \ge \left(e^{t\Delta_{\Omega_1}}\chi_{\Omega_1}\right) \left(e^{t\Delta_\mathbb{R}}\phi\right).
 \end{equation}
On the other hand, setting $\varphi(x) = \cos\Bigl(\dfrac{\pi (x-x_0)}{2\ell}\Bigr)$, we have
\begin{equation}\label{compuw1b}
e^{t\Delta_{\Omega_1}}\chi_{\Omega_1}
\ge e^{t\Delta_{\Omega_1}}\varphi=e^{-\lambda_\ell t}\varphi.
\end{equation}
In particular, it follows from (\ref{compuw1a}), (\ref{compuw1b}) and the maximum principle that
$$v(t,x_0)\ge e^{-\lambda_\ell t} \left(e^{t\Delta_\mathbb{R}}\phi\right)(x_0)\ge e^{-\lambda_\ell t} w(t,x_0).$$
For each $t>0$, we thus have 
$$v(s,x_0)\ge e^{-\lambda_\ell t}w(s,x_0),\quad 0<s\le t.$$
It then follows from the maximum principle, applied to 
$z(s,x):=v(s,x)-e^{-\lambda_\ell t}w(s,x)$ on $[0,t]\times [x_0,R]$ for each $t>0$, that
\begin{equation}\label{compuw1}
v(t,x)\ge e^{-\lambda_\ell t}w(t,x),\quad t>0,\ x_0\le x\le R. 
\end{equation}
Now, $w$ admits the representation
$$w(t,x)=\int_{x_0-1}^{x_0+1} K(t,x,y) \phi(y) \, dy,
\quad\hbox{where }
K(t,x,y)=(4\pi t)^{-1/2}e^{-(x-y)^2/4t}\Bigl[1-e^{-(R-x)(R-y)/t}\Bigr]$$
is the Dirichlet heat kernel of the half-line $(-\infty, R)$.
For all $t>0$ and $x\in [x_0,R)$, we have
$$
w(t,x) \geq \Bigl[1-e^{-(R-x_0-1)(R-x)/t}\Bigr] \displaystyle \int_{x_0-1}^{x_0+1} (4\pi t)^{-1/2}e^{-(x-y)^2/4t} \phi(y) \, dy 
=\Bigl[1-e^{-(R-x_0-1)(R-x)/t}\Bigr]  \left(e^{t\Delta_\mathbb{R}}\phi\right) (x).
$$
This combined with (\ref{compuw1}) yields the desired estimate.
\end{proof}

\section{Appendix 2: Optimality of the  cut-off functions $a(r)$}

We here justify the claim, made in Section~3.3, 
about the optimality of the functions $a(r)$
involved in the main auxiliary functional $J$ from \eqref{defJLambda},
among all possible solutions of the differential inequality \eqref{diffineqa1}.

\begin{proposition}\label{optimal prolongation lemma}
Let $q\in [0,1]$, $\eta\in (0,1]$, $ R_1>1$, $K>0$, with $K\leq (p+q+1)p/q$ if $q>0$.
Let $h=h(u)$ and $F=F(r,\xi)$ be defined by \eqref{defhKq} and \eqref{diffineqa2}.
Assume that there exists a solution $a\in  W^{2,2}([0,R_1])$ of 
\begin{equation}\label{diffineqa1-app}
a''(r) \ge a(r)\,F\Bigl(r,\dfrac{a'(r)}{a(r)}\Bigr),\quad 0\le r<R_1,
\end{equation}
\begin{equation}\label{diffineqa2-app}
a'(0)=0,\quad a(1)=1,\quad a(R_1)=0,
\end{equation}
\begin{equation}\label{diffineqa3-app}
a>0\ \hbox{ and }\ a'\le 0 \quad\hbox{ in $[0,R_1)$.}
\end{equation}

(i) Then there exist $r_0\in [0,1)$, $\beta>0$ with $1+\beta\le R_1$,
 and a solution $\bar a\in  W^{2,2}([r_0,1+\beta])$ of 
\begin{equation}\label{diffeqa1-app}
\bar a''(r) = \bar a(r)\,F\Bigl(r,\dfrac{\bar a'(r)}{\bar a(r)}\Bigr),
\quad\hbox{ for a.e. $r\in(r_0,1+\beta)$,} \qquad
\bar a'(r_0)=0,\quad\bar a(1)=1,\quad \bar a(1+\beta)=0,
\end{equation}
such that
$$0<\bar a\le a\ \hbox{ and }\ \bar a'\le 0 \quad \hbox{ in $[r_0,1+\beta)$}.$$

(ii) Let $\bar a$ be extended by setting $\bar a(r)=\bar a(r_0)$ on $[0,r_0]$.
Then $\bar a\in  W^{2,2}([0,1+\beta])$ and $\bar a$ is a solution of \eqref{diffineqa1-app}-\eqref{diffineqa3-app}
with $R_1$ replaced by $1+\beta$.
Moreover, for any open interval $\Omega$ containing $(-1-\beta,1+\beta)$,  any $\ell\in [1,1+\beta]$ and any $t>0$, we have
\begin{equation}\label{compeps1}
\inf_{x\in ( -\ell,\ell)} \dfrac{e^{t\Delta_\Omega}\chi_{(-1,1)}(x)}{\bar a(|x|)}
\ge \inf_{x\in ( -\ell,\ell)} \dfrac{e^{t\Delta_\Omega}\chi_{(-1,1)}(x)}{a(|x|)}.
\end{equation}
\end{proposition}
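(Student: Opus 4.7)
The plan is to reduce the second-order differential inequality to a first-order one by the logarithmic substitution $\phi = \log a$, and then apply a standard comparison principle for first-order ODEs. Setting $\psi := \phi' = a'/a \in W^{1,2}_{\rm loc}([0,R_1))$, a direct computation converts \eqref{diffineqa1-app} into
\[
\psi'(r) \;\geq\; F(r,\psi(r)) - \psi(r)^2 \;=:\; G(r,\psi(r)) \quad\text{a.e. on } [0,R_1),
\]
with $\psi(0)=0$, $\phi(1)=0$, and $\phi(R_1^-)=-\infty$. Moreover $\psi_0 := a'(1) < 0$: since $F(r,0)=0$ for $r>1$, \eqref{diffineqa1-app} gives $a''\ge 0$ on $[1,R_1]$, so $a'$ is nondecreasing there, and the boundary data $a(1)=1$, $a(R_1)=0$, $a'\le 0$ force strict negativity at $r=1$.

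I would then let $\bar\psi$ be the maximal solution of the IVP $\bar\psi'=G(r,\bar\psi)$, $\bar\psi(1)=\psi_0$, defined on a maximal interval $(r_\ast,1+\bar\beta)$, and define $\bar a(r):=\exp\int_1^r \bar\psi$, which satisfies $\bar a(1)=1$ together with the equality version of the ODE. By the standard comparison theorem for first-order ODEs, applied forward from $r=1$ and then backward, one gets $\bar\psi\le\psi$ on $[1,\min(R_1,1+\bar\beta))$ and $\bar\psi\ge\psi$ on $(\max(0,r_\ast),1]$. Integrating these inequalities and exponentiating yields $\bar a\le a$ on the common domain.

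Next I would extract $r_0$ and $\beta$. For the forward extent, $\bar\psi\to -\infty$ at $r=(1+\bar\beta)^-$; if we had $1+\bar\beta>R_1$ then $\bar a(R_1)$ would be strictly positive, contradicting $\bar a(R_1)\le a(R_1)=0$. So $1+\bar\beta\le R_1$, and I set $\beta:=\bar\beta$. For the backward extent, $\bar\psi$ cannot blow up to $-\infty$ on $(r_\ast,1]$ (else $\psi\le\bar\psi$ would blow up, contradicting the boundedness of $\psi$ on compacts of $[0,R_1)$). Hence either $r_\ast<0$ with $\bar\psi(0)\ge\psi(0)=0$, or $r_\ast\ge 0$ with $\bar\psi\to+\infty$ at $r_\ast$; in both cases the intermediate value theorem produces a zero of $\bar\psi$ in $[\max(0,r_\ast),1)$, and letting $r_0$ be the largest such zero ensures $\bar a'(r_0)=0$, $\bar a>0$, $\bar a'\le 0$ on $[r_0,1+\beta)$, and $\bar a\le a$ on $[r_0,1+\beta]$. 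This proves (i). Part (ii) is then routine: the constant extension on $[0,r_0]$ is $C^1$ at $r_0$ (since $\bar a'(r_0)=0$), hence $W^{2,2}$, and satisfies $\bar a''=0\ge\bar a\,F(r,0)$ on $[0,r_0]$ because $F(r,0)\le 0$ for $r<1$ (directly from \eqref{diffineqa2-app}); the bound $\bar a\le a$ propagates to $[0,r_0]$ using the monotonicity of $a$. Finally \eqref{compeps1} is immediate from $\bar a\le a$, positivity of both, and $e^{t\Delta_\Omega}\chi_{(-1,1)}\ge 0$, since these give $e^{t\Delta_\Omega}\chi_{(-1,1)}/\bar a \ge e^{t\Delta_\Omega}\chi_{(-1,1)}/a$ pointwise.

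The main obstacle is justifying the first-order comparison theorem used on $\psi'\ge G(r,\psi)$, which requires $G(r,\cdot)$ to be (at least locally) Lipschitz in $\xi$; since $F$ is defined as a supremum over $u$ in \eqref{diffineqa2-app}, one must check that the envelope is not merely continuous but Lipschitz. This is the only non-formal input, and it can be read off the explicit piecewise expressions of $F$ worked out in the proofs of Lemmas~\ref{Explicita0} and \ref{Explicita1F}: the optimizing $u$ depends smoothly on $\xi$ away from a single transition threshold, so $F$ is piecewise $C^1$ in $\xi$ and the required Lipschitz bound holds on bounded subsets.
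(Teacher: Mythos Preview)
Your approach is essentially the same as the paper's: reduce to the first-order equation for $\psi=a'/a$, solve the equality case from $r=1$ in both directions, compare, and extract $r_0$ and $\beta$; the paper merely splits this into two separate steps for $r<1$ and $r>1$, using the explicit power-law solution $\bar a(r)=((1+\beta-r)/\beta)^{p+1}$ on the right. One point worth sharpening: your justification of the Lipschitz property of $\xi\mapsto F(r,\xi)$ by appeal to Lemmas~\ref{Explicita0} and~\ref{Explicita1F} only covers $q=0$ and $(q,\eta)=(1,1)$, whereas the proposition is stated for all $q\in[0,1]$, $\eta\in(0,1]$. The paper instead gives a one-line envelope argument valid for all parameters: writing $F_1(X)=\sup_u\bigl[m_1(u)X^2-M_1(u)\bigr]$ with $m_1=h'^2/(hh'')\in C([0,1])$ and letting $u(X)$ be a maximizer, one has $F_1(X)-F_1(Y)\le m_1(u(X))(X^2-Y^2)\le\|m_1\|_\infty(|X|+|Y|)\,|X-Y|$.
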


\begin{proof}
(i) {\bf Step 1}. {\it Preliminaries.}
Set 
\begin{equation}\label{defF1}
F_1(X)=\displaystyle\sup_{u\in (1-\eta,1)}f_1(u,X),\qquad f_1(u,X)=m_1(u)X^2-M_1(u),
\end{equation}
where
\begin{equation}\label{defF1b}
m_1(u)=\dfrac{h'^2(u)}{hh''(u)},\quad M_1(u)=\dfrac{(p+q)K\mu}{(1-u)^{p+1-q}h(u)}.
\end{equation}
We claim that 
\begin{equation}\label{defF1c}
\hbox{$F_1$ is locally Lipschitz continuous on $\R$.}
\end{equation}

We have $m_1\in C([0,1])$,  owing to \eqref{calculQX} and using $K\leq (p+q+1)p/q\le p(p+1)/q(1-q)$ if $q\in (0,1)$. 
On the other hand, if $q\in  [0,1)$, 
then $M_1\in C([0,1))$ and $\lim_{u\to 1}M_1(u)=+\infty$,
whereas $M_1\in C([0,1])$ if $ q=1$. 
In both cases, for all $X\in\R$, there exists $u(X)\in [ 1-\eta,1]$ 
such that $F_1(X)=f_1(u(X),X)$ (with $u(X)\in [ 1-\eta,1)$ if $q\in [0,1)$). 
This, combined with \eqref{defF1} and \eqref{defF1b}, yields
$$\begin{array}{ll}
F_1(X)-F_1(Y)&=f_1(u(X),X)-f_1(u(Y),Y)\le f_1(u(X),X)-f_1(u(X),Y) \\
\noalign{\vskip 1mm}
&=m_1(u(X))(X^2-Y^2)\le \|m_1\|_\infty(|X|+|Y|)|X-Y|.
\end{array}$$
Exchanging the roles of $X, Y$, claim \eqref{defF1c} follows.

{\bf Step 2}. {\it Resolution of \eqref{diffeqa1-app} for $r<1$ and comparison.}
Set
$\hat F_1(\phi):=F_1(\phi)-\phi^2.$
Let $\psi$ be the maximal solution of the Cauchy problem
$$
\psi'=\hat F_1(\psi),\quad r<1,\qquad\hbox{ with }
\psi(1)=a'(1).
$$
Denote by $r^*\in [0,1)$ the endpoint of its interval of existence and set
$\bar a(r)=\exp\bigl[\int_1^r \psi(\tau)\,d\tau\bigr]>0.$
We claim that there exists $r_0\in [0,1)$ such that $ \bar a\in C^2([r_0,1])$, 
\begin{equation}\label{compabar0}
\frac{\bar a''}{\bar a}=F_1(\psi),\quad r_0<r<1,\qquad \bar a'(r_0)=0,\ \bar a(1)=1
\end{equation}
and
\begin{equation}\label{compabar}
\hbox{ $\bar a\le a$ and $\bar a'\le 0$ on $[r_0,1]$.}
\end{equation}

Using $\psi=\frac{\bar a'}{\bar a}$, we have
$\frac{\bar a''}{\bar a}=\psi'+\psi^2=\hat F_1(\psi)+\psi^2=F_1(\psi)$ for all $r\in(r^*,1)$.
Setting $\phi=(\log a)'=\frac{a'}{a}\le 0$, by \eqref{diffineqa1-app}, we have
$\phi'=\frac{a''}{a}-\phi^2\ge \hat F_1(\phi)$ for all $r\in (0,1)$,
and $\phi(1)=a'(1)<0$. 
Using \eqref{defF1c} and the fact that $\phi, \psi$ are locally bounded on $(r^*,1]$, 
for each $r_2\in (r^*,1)$, it follows that there exists $L>0$ such that
$$(\phi-\psi)_+'(r)\ge [\hat F_1(\phi(r))-\hat F_1(\psi(r))]\chi_{\{\phi>\psi\}}\ge - L(\phi-\psi)_+ 
\quad\hbox{ a.e. in $(r_2,1)$.}$$
Since $(\phi-\psi)(1)=0$, we conclude that $(\phi-\psi)_+=0$ on $(r^*,1]$, hence
\begin{equation}\label{compphibar}
\phi(r)\le \psi(r) ,\quad r^*<r<1.
\end{equation}
It follows that
$$a(r)=\exp\Bigl[\int_1^r \phi(\tau)\,d\tau\Bigr]\ge \exp\Bigl[\int_1^r \psi(\tau)\,d\tau\Bigr]= \bar a(r),
\quad r^*<r<1.$$

Now first consider the case when 
\begin{equation}\label{discrstar}
\hbox{$r^*=0$ and $\psi$ is continuous on $[0,1]$.}
\end{equation}
Then, since $\psi(0)\ge \phi(0)=0$ and $\psi(1)<0$, it follows that there exists a largest $r_0\in [0,1)$ such that $\psi(r_0)=0$.

Next consider the case when \eqref{discrstar} is not true. Then we must have $\lim_{r\to r^*}|\psi(r)|=\infty$.
On the other hand, since $m_1(u)\le 1$ by \eqref{condaexterior} and \eqref{condaexterior2},
we have $\psi'=\hat F_1(\psi)=F_1(\psi)-\psi^2\le 0$ on $(r^*,1]$.
It follows that $\lim_{r\to r^*}\psi(r)=+\infty$
and there again exists a largest $r_0\in (r^*,1)$ such that $\psi(r_0)=0$.

In both cases we have $ \bar a\in C^2([r_0,1])$, \eqref{compabar0} and \eqref{compabar}. 

{\bf Step 3}. {\it Resolution of \eqref{diffeqa1-app} for $r>1$ and comparison.}
By \eqref{condaexterior}, \eqref{condaexterior2}, we have 
$$F(r,X)=F_2(X):=\frac{p}{p+1}X^2, \qquad r>1.$$
In particular, we have $a''(r)\ge 0$ due to \eqref{diffineqa1-app}.
Since $a(R_1)=0<a(1)$, it follows that $a'(1)<0$.
Now set 
$$\bar a(r)=\Bigl(\frac{1+\beta-r}{\beta}\Bigr)^{p+1},\quad 1\le r\le1+\beta
\qquad\hbox{ where }\ \beta=-\frac{p+1}{\bar a'(1)}>0.$$
An immediate computation shows that
\begin{equation}\label{eqnabar1}
\frac{\bar a''}{\bar a}=F_2\Bigl(\frac{\bar a'}{\bar a}\Bigr),\quad 1<r<1+\beta.
\end{equation}
Moreover, $\bar a'(1)=-(p+1)/\beta=a'(1)$ and $\bar a\in  W^{2,2}(1,1+\beta)$.
Now define $\hat F_2(X)=F_2(X)-X^2=-\frac{X^2}{p+1}$, along with
$$\phi=\frac{a'}{a},\ \ 1\le r  <  R_1
\quad\hbox{ and }\quad 
\psi(r)=\frac{\bar a'(r)}{\bar a(r)}=-\frac{p+1}{1+\beta-r},\ \ 1\le r  <  1+\beta.$$  
Using \eqref{diffineqa1-app} and \eqref{eqnabar1}, we easily obtain
$\phi'\ge \hat F_2(\phi),\ 1\le r  <  R_1$, 
and $\psi'=\hat F_2(\psi),\ 1\le r  < 1+\beta,$ 
with $\psi(1)=\phi(1)$. 
Applying the argument leading to \eqref{compphibar}, this time for $r>1$, we obtain
$\phi(r)\ge \psi(r)$ for all $r\in (1,\min(R_1,1+\beta))$,
hence
\begin{equation}\label{compabar2}
a(r)=\exp\Bigl[\int_1^r \phi(\tau)\,d\tau\Bigr]\ge \exp\Bigl[\int_1^r \psi(\tau)\,d\tau\Bigr]= \bar a(r)>0,
\quad 1<r<\min(R_1,1+\beta).
\end{equation}
Since $a(R_1)=0$, it follows that $R_1\ge 1+\beta$.

Finally, since $\bar a\in C^{2}([r_0,1])$, $\bar a\in  W^{2,2}([1,1+\beta])$ and $\bar a'_-(1)=a'(1)=\bar a'_+(1)$, 
we have $\bar a\in  W^{2,2}([r_0,1+\beta])$.
In view of \eqref{compabar0}, \eqref{compabar}, \eqref{eqnabar1}, \eqref{compabar2}, and noting that $\bar a(1+\beta)=0$,
this completes the proof of assertion (i).
\smallskip

(ii) Since $\bar a\in  W^{2,2}([r_0,1+\beta])$ and $\bar a'_+(r_0)=0$, we have $\bar a\in  W^{2,2}([0,1+\beta])$.
By~\eqref{diffineqa2}, for all $r\in (0, r_0)$, 
we have $F(r,0)\le 0$, hence $\bar a''(r)=0\ge F(r,0)=F(r,\frac{\bar a'}{\bar a})$.
This along with  Steps~2 and 3 
 guarantees that $\bar a$ is a solution of \eqref{diffineqa1-app}-\eqref{diffineqa3-app}
with $R_1$ replaced by $1+\beta$.

Next, since $a(r)\ge a(r_0)\ge \bar a(r_0)=\bar a(r)$ on $[0,r_0]$
due to $a'\le 0$, we deduce from \eqref{compabar}, \eqref{compabar2} that
$0<\bar a\le a$ in $[0,1+\beta)$
and  property \eqref{compeps1} 
follows.
\end{proof}

\end{document}